\documentclass{siamart0516}




\usepackage{lipsum}
\usepackage{amsfonts}
\usepackage{graphicx}
\usepackage{epstopdf}
\usepackage{algorithm} 
\usepackage{algpseudocode}
\usepackage{tikz}
\usepackage{pgfplots}

\numberwithin{equation}{section}
\numberwithin{theorem}{section}
\numberwithin{figure}{section}
\numberwithin{table}{section}
\numberwithin{algorithm}{section}
\usetikzlibrary{external}
\tikzexternalize
\newcommand{%
	\tikzsetnextfilename{Figures/}%
	\input{Figures/.tikz}%
}[1]{%
	\tikzsetnextfilename{Figures/#1}%
	\input{Figures/#1.tikz}%
}

\newcommand{%
	\tikzsetnextfilename{Figures_Revised/}%
	\input{Figures_Revised/.tikz}%
}[1]{%
	\tikzsetnextfilename{Figures_Revised/#1}%
	\input{Figures_Revised/#1.tikz}%
}

\ifpdf
  \DeclareGraphicsExtensions{.eps,.pdf,.png,.jpg}
\else
  \DeclareGraphicsExtensions{.eps}
\fi

\newcommand{\TheTitle}{ \texorpdfstring{\boldmath{$\cH_2$}}{H2}-Quasi-Optimal Model Order Reduction for Quadratic-Bilinear Control Systems}
\newcommand{\TheShortTitle}{ {$\mathbf{\mathcal{H}_2}$}-Quasi-Optimal MOR for QB Control Systems}
\newcommand{\TheAuthors}{Peter Benner, Pawan Goyal, and Serkan Gugercin}

\headers{\TheShortTitle}{\TheAuthors}

\title{{\TheTitle}\thanks{Submitted to the editors \today.
}}

\author{
  Peter Benner\thanks{Max Planck Institute for Dynamics of Complex Technical Systems, Sandtorstra\ss e 1, 39106 Magdeburg, Germany; (\email{benner@mpi-magdeburg.mpg.de}).}
  \and
  Pawan Goyal\thanks{Corresponding author. Max Planck Institute for Dynamics of Complex Technical Systems, Sandtorstra\ss e 1, 39106 Magdeburg, Germany; (\email{goyalp@mpi-magdeburg.mpg.de}).}
  \and
  Serkan Gugercin\thanks{Department of Mathematics, Virginia Tech, Blacksburg, VA 24061-0123, United States;  (\email{gugercin@math.vt.edu}).}
}

\usepackage{amsopn}
\usepackage[software,hardware]{mymacros}
\newcommand{\0}{\mathbf 0}
\newcommand{\bbm}{\begin{bmatrix}}
	\newcommand{\ebm}{\end{bmatrix}}
\newcommand{\bpm}{\begin{pmatrix}}
	\newcommand{\epm}{\end{pmatrix}}

\newenvironment{customlegend}[1][]{%
	\begingroup
	\csname pgfplots@init@cleared@structures\endcsname
	\pgfplotsset{#1}%
}{%
\csname pgfplots@createlegend\endcsname
\endgroup
}%

\def\addlegendimage{\csname pgfplots@addlegendimage\endcsname}

\newlength\fheight
\newlength\fwidth

\usepackage{adjustbox}
\usepackage{pdflscape}

\usepackage[ntheorem]{empheq}
\usepackage[american]{babel}
\usepackage[title,titletoc,toc]{appendix}

\renewcommand{\hat}[1]{\widehat#1}
\renewcommand{\tilde}[1]{\widetilde#1}

\newtheorem{example}[theorem]{Example}
\newtheorem{remark}[theorem]{Remark}
\usepackage{enumitem}
\usepackage{amsmath}

\usepackage{array,multirow}

\ifpdf
\hypersetup{
  pdftitle={\TheTitle},
  pdfauthor={\TheAuthors}
}
\fi




\usepackage{soul}

\begin{document}

\maketitle

\begin{abstract}
  We investigate the optimal model reduction problem for large-scale quadratic-bilinear (QB) control systems. Our contributions are  threefold. First, we discuss the variational analysis and the Volterra series formulation for QB systems. We then define the  $\mathcal H_2$-norm  for a QB system based on the  \emph{kernels} of the underlying Volterra series and  also propose  a truncated $\cH_2$-norm.  Next, we derive  first-order necessary conditions  for an optimal approximation, where optimality is measured in term of the truncated $\cH_2$-norm of the error system.  We then propose an iterative model reduction algorithm, which upon convergence yields a reduced-order system that \emph{approximately} satisfies the newly derived optimality conditions. We also discuss an efficient computation of the  reduced Hessian, using the special Kronecker structure of the Hessian of the system.  We illustrate the efficiency of the proposed method by means of several numerical examples resulting from semi-discretized nonlinear partial differential equations and show its competitiveness with the existing model reduction schemes for QB systems such as moment-matching methods and balanced truncation.

\end{abstract}

\begin{keywords}
  Model order reduction, quadratic-bilinear control systems,  tensor calculus, $\cH_2$-norm, Sylvester equations,  nonlinear partial differential equations.
\end{keywords}

\begin{AMS}
15A69, 34C20, 41A05, 49M05, 93A15, 93C10, 93C15.
\end{AMS}
\
 \section{Introduction}  \label{sec:intro}

Numerical simulation is a fundamental tool in the analysis of dynamical systems, and is required repeatedly in the computational control design, optimization and prediction thereof. Dynamical systems are generally governed by partial differential equations (PDEs), or ordinary differential equations (ODEs), or a combination of both. A high-fidelity approximation of the underlying physical phenomena requires a finely discretized mesh over the interesting spatial domain,  leading to  complex dynamical systems with a high-dimensional  state-space. The simulations of such large-scale systems, however, impose a huge computational  burden.  This inspires \emph{model order reduction} (MOR), which aims at constructing simple and reliable surrogate models such that their input-output behaviors  approximate that of the original large-scale system accurately. These surrogate models can then be used in engineering studies, which make numerical simulations faster and efficient.

In recent decades, numerous theoretical and computational aspects for MOR of linear systems have been developed; e.g., see~\cite{morAnt05,baur2014model,morBenMS05,morSchVR08}. These methods have been successfully applied in various fields, e.g., optimal control and PDE constrained optimization  and uncertainty quantification; for example, see~\cite{BenSV14,morHinV08}. In recent years, however, MOR of nonlinear systems has gained significant interest with a goal of extending the input-independent, optimal MOR techniques from linear systems to nonlinear ones. For example, MOR techniques for linear systems such as balanced truncation~\cite{morAnt05,morMoo81}, or the iterative rational Krylov method (IRKA)~\cite{morGugAB08}, have been extended to a special class of nonlinear systems, so-called \emph{bilinear systems}, in which nonlinearity arises from the product of the state and  input~\cite{morBenB12b,morBenD11,flagg2015multipoint,morZhaL02}. However, there are numerous open and difficult problems for other important classes of nonlinear systems. In this
article, we address another vital class of nonlinear systems,  called \emph{quadratic-bilinear} (QB) systems. These are of the form:
\begin{equation}\label{sys:QBsys}
\Sigma :
\begin{cases}
\begin{aligned}
 \dot x(t) &= Ax(t) + H\left(x(t)\otimes x(t)\right) +\sum_{k=1}^m N_kx(t)u_k(t) + Bu(t),\\
 y(t)& = Cx(t), \quad x(0) = 0,
\end{aligned}
\end{cases}
\end{equation}
where $x(t) \in \Rn$, $u(t) \in \Rm$ and $y(t)\in\Rp$  are the states, inputs, and outputs of the systems, respectively, $u_k$ is the $k$th component of $u$, and $n$ is the state dimension which is generally very large. Furthermore, $A,N_k \in \Rnn$ for $k \in \{1,\ldots,m\}$, $H \in \R^{n\times n^2}$, $B \in \Rnm$, and $C \in \Rpn$.

There is a variety of applications where the system inherently contains a quadratic nonlinearity, which can be modeled in the QB form~\eqref{sys:QBsys}, e.g., Burgers' equation. Moreover, a large class of smooth nonlinear systems, involving combinations of elementary functions like exponential, trigonometric, and polynomial functions, etc.\ can be equivalently rewritten as  QB systems~\eqref{sys:QBsys} as shown in~\cite{morBenB15,morGu09}. This is achieved by  introducing some new appropriate state variables to simplify the  nonlinearities present in the underlying control system and deriving differential equations, corresponding to the newly introduced variables, or by using appropriate algebraic constraints. When algebraic constraints are introduced in terms of the state and the newly defined variables, the system contains algebraic equations  along with differential equations. Such systems are called \emph{differential-algebraic equations} (DAEs) or \emph{descriptor systems} \cite{kunkel2006differential}.
 MOR procedures for DAEs become inevitably more complicated, even in the linear and bilinear settings; e.g., see~\cite{MPIMD15-16,morGugSW13}.  In this article, we restrict ourselves to QB ODE systems and leave MOR for QB descriptor systems as a future research topic.

For a given QB system~\eqref{sys:QBsys} $\Sigma$ of dimension $n$, our aim  is to construct a reduced-order system
\begin{equation}\label{sys:QBsysRed}
\hat\Sigma :
\begin{cases}
\begin{aligned}
 \dot  \hx(t) &= \hA\hx(t) + \hH\left(\hx(t)\otimes \hx(t)\right) +\sum_{k=1}^m \hN_k\hx(t)u_k(t) + \hB u(t),\\
 \hy(t)& = \hC\hx(t),\quad \hx(0) = 0,
\end{aligned}
\end{cases}
\end{equation}
where $\hA,\hN_k \in \Rrr$ for $k \in \{1,\ldots,m\}$, $\hH \in \R^{r\times r^2}$, $\hB \in \R^{r\times m}$, and $\hC \in \R^{p\times r}$ with $r\ll n$ such that the outputs of the system~\eqref{sys:QBsys} and~\eqref{sys:QBsysRed}, $y$ and $\hy$, are very well approximated in a proper norm for all admissible inputs $u_k \in L^2[0,\infty[$. 

Similar to the linear and bilinear cases, we construct the reduced-order system~\eqref{sys:QBsysRed} via projection. Towards this goal, we construct two model reduction basis matrices $V,W\in \Rnr$  such that $W^TV$ is invertible. Then, the reduced matrices in~\eqref{sys:QBsysRed} are given by
\begin{equation*}
 \begin{aligned}
  \hA &= (W^TV)^{-1}W^TAV,& \hN_k &= (W^TV)^{-1}W^TN_kV,~~\text{for} ~~k \in \{1,\ldots,m\},\\
  \hH &= (W^TV)^{-1}W^TH(V\otimes V),\quad &\hB &= (W^TV)^{-1}W^TB,\quad\text{and}\quad \hC = CV.
 \end{aligned}
\end{equation*}
It can   be easily seen that the quality of the reduced-order system  depends on the choice of the reduction subspaces $\Ima V$ and $\Ima W$. There exist various MOR approaches in the literature to determine these subspaces. One of the earlier and popular methods for nonlinear systems is trajectory-based MOR such as proper orthogonal decomposition (POD); e.g., see \cite{morAstWWetal08,morChaS10,morHinV05,morKunV08}. This relies on the Galerkin projection $\cP = \cV\cV^T$, where $\cV$ is determined based on the dominate modes of the system dynamics. For efficient computations of nonlinear parts in the system, some advanced methodologies can be employed such as the empirical interpolation method (EIM), the best points interpolation method, the discrete empirical interpolation method (DEIM), see, e.g.~\cite{morAstWWetal08,morBarMNetal04,morGreMNetal07,morChaS10,morNguyenPP08}.  Another widely used method for nonlinear system is the trajectory piecewise linear (TPWL) method, e.g., see~\cite{morRew03}. For this method, the nonlinear system is replaced by a weighted sum of linear systems;  these linear systems can then be reduced by using well-known methods for linear systems such as balanced truncation, or interpolation methods, e.g., see~\cite{morAnt05}. However, the above mentioned methods require some snapshots or solution trajectories of the original systems for particular inputs. This indicates that the resulting reduced-order system depends on the choice of  inputs, which may make the reduced-order system inadequate in many applications such as control and optimization, where the variation of the input is  inherent to the problem.

MOR methods, based on interpolation or moment-matching, have been extended from linear systems to QB systems, with the aim of  capturing the input-output behavior of the underlying system independent of a training input. One-sided interpolatory projection for QB systems is studied in, e.g.,~\cite{bai2002krylov,morGu09,phillips2000projection,morPhi03}, and has been recently extended  to a two-sided interpolatory projection in~\cite{morBenB12a,morBenB15}. These methods result in reduced systems that do not rely on the training data for a control input; see also the survey  \cite{baur2014model} for some related approaches. Thus, the determined reduced systems can be used in input-varying applications. In the aforementioned  references, the authors have shown how to construct an interpolating reduced-order system for a given set of interpolation points. But it is still an open problem how to choose these interpolation points optimally with respect to an appropriate norm. Furthermore, the two-sided interpolatory projection method~\cite{morBenB15} is only applicable to single-input single-output systems, which is very restrictive, and additionally, the stability of the resulting reduced-order systems also remains another major issue.

Very recently, balanced truncation  has been extended from linear/bilinear systems to QB systems~\cite{morBTQBgoyal}.  This method first determines the states which are hard to control and observe and constructs the reduced model by truncating those states.  Importantly, it results in locally Lyapunov stable reduced-order systems, and an appropriate order of the reduced system can be determined based on the singular values of the Gramians of the system. But  unlike in the linear case the resulting reduced systems do not guarantee other desirable properties such as  an a priori  error bound.   Moreover, in order to apply balanced truncation to QB systems, we require the solutions of four conventional Lyapunov equations, which could be computationally cumbersome in large-scale settings; though there have been many advancements in recent times related to computing the low-rank solutions of Lyapunov equations~\cite{benner2013numerical,simoncini2016computational}.

In this paper, we study the $\cH_2$-optimal approximation problem for QB systems.  
Precisely, we show how to choose the model reduction bases in a  two-sided projection  interpolation framework for QB systems so that the reduced model is a local minimizer in an appropriate norm.
Our main contributions are threefold. In \Cref{sec:preliminarywork}, we derive various expressions and formulas related to Kronecker products, which are later heavily utilized in deriving  optimality conditions.  In \Cref{sec:optimalitycondition}, we first define the  $\cH_2$-norm of the QB system~\eqref{sys:QBsys} based on the  \emph{kernels} of its Volterra series (input/output mapping), and then derive an expression for a truncated $\cH_2$-norm for QB systems as well. Subsequently, based on a truncated $\cH_2$-norm  of the error system, we derive  first-order necessary conditions for optimal model reduction of QB systems.  We then propose an iterative algorithm to construct reduced models that \emph{approximately} satisfy the newly derived optimality conditions.  Furthermore, we discuss an  efficient alternative way to compute reduced Hessians as compared to the one proposed in~\cite{morBenB15}.
 In \Cref{sec:numerics}, we illustrate the efficiency of the proposed method for various semi-discretized nonlinear PDEs and compare it with existing  methods such as balanced truncation~\cite{morBTQBgoyal} as well as the one-sided and two-sided interpolatory projection methods for QB systems~\cite{morBenB15,morGu09}. We conclude the paper with a short summary and potential future directions in \Cref{sec:conclusions}.

\vspace{2ex}
\noindent{\bf Notation:} Throughout the paper, we make use of the following notation:
\begin{itemize}[leftmargin=2em,topsep=1ex,itemsep=-1ex,partopsep=1ex,parsep=1ex]
 \item $I_q$ denotes the identity matrix of size $q\times q$, and its $p$th column is denoted by $e_p^q$.
 \item $\vecop{\cdot}$ denotes vectorization of  a matrix, and $\cI_m$ denotes $\vecop{I_m}$.
 \item $\trace{\cdot}$ refers to the trace of a matrix.
 \item Using  \matlab ~notation, we denote the $j$th column of the matrix $A$  by $A(:,j)$.
 \item $\0$ is a zero matrix of appropriate size.
 \item We denote the full-order system~\eqref{sys:QBsys} and reduced-order system~\eqref{sys:QBsysRed} by $\Sigma$ and $\hat{\Sigma}$, respectively.
\end{itemize}
 \section{Tensor Matricizations and their Properties} \label{sec:preliminarywork}
We first review some basic concepts from tensor algebra. 
First, we note the following important properties of the $\vecop{\cdot}$ operator: 
\begin{subequations}\label{eq:tracepro}
\begin{align}
 \trace{X^TY} &= \vecop{X}^T\vecop{Y} = \vecop{Y}^T\vecop{X},~\text{and}\label{eq:trace1}\\
 \vecop{XYZ} &= (Z^T\otimes X)\vecop{Y}. \label{eq:trace2}
 \end{align}
\end{subequations}
Next, we review the concepts of matricization of a tensor. Since the Hessian $H$ of the QB system in~\eqref{sys:QBsys} is closely related to  a $3$rd order tensor,  we focus on  $3$rd order tensors  $ \cX^{n\times n\times n}$. However, most of the concepts can be extended to a general $k$th order tensor.   Similar to how rows and columns are defined for a matrix, one can define a fiber of $\cX$ by fixing all indices except for one, e.g., $\cX(:,i,j),\cX(i,:,j)$ and  $\cX(i,j,:)$.  Mathematical operations involving tensors are  easier to perform with its corresponding matrices. Therefore, there exists a very well-known process of unfolding a tensor into a matrix, called \emph{matricization} of a tensor.  For a 3-dimensional tensor, there are $3$ different ways to unfold the tensor, depending on the mode-$\mu$ fibers that are used for the unfolding. If the tensor is unfolded using mode-$\mu$ fibers,  it is called the mode-$\mu$ matricization of $\cX$. We refer to~\cite{horn37topics,koldatensor09} for more details on these basic concepts of tensor theory.

In the following example, we illustrate how a $3$rd order tensor $\cX \in \R^{n\times n\times n}$ can be unfolded into different matrices.
 \begin{example}
 Consider a $3$rd order tensor $\cX^{n\times n\times n}$ whose frontal slices are given by matrices $X_i \in \Rnn$ as shown in \Cref{fig:tensor_pic}.

 \begin{figure}[h]
 \centering
 \includegraphics[scale =.5]{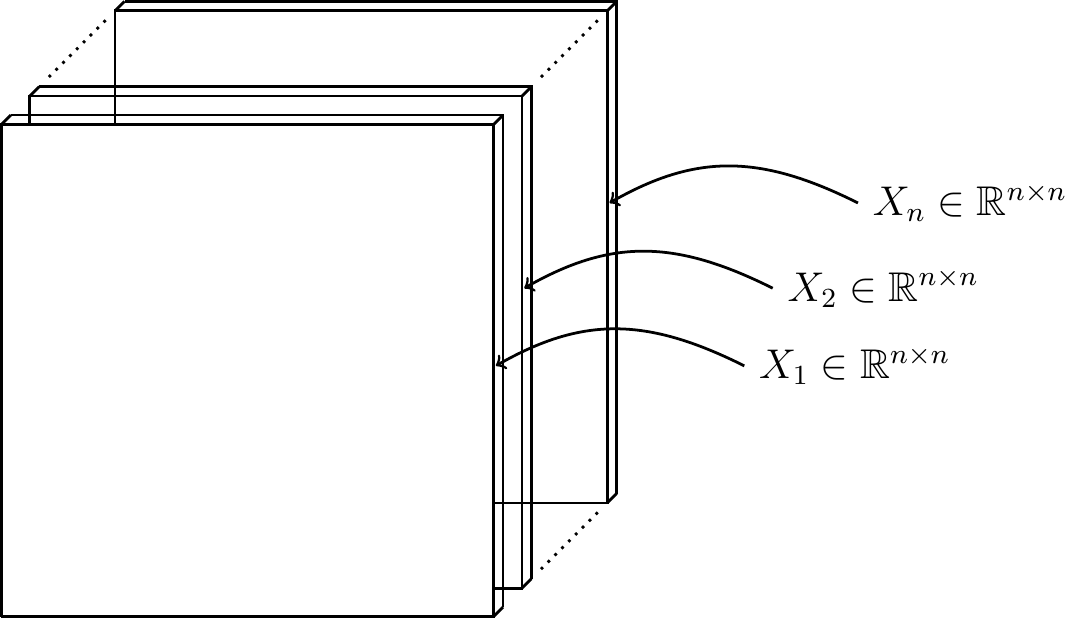}
 \caption{Representation of a tensor using the frontal slices~\cite{koldatensor09}.}
 \label{fig:tensor_pic}
 \end{figure}
Then, its mode-$\mu$ matricizations, $\mu \in \{1,2,3\}$, are given by
\begin{align*}
\cX^{(1)} & = [ X_1 ,X_2,\ldots,X_n],~~~
\cX^{(2)}  = [ X^T_1 ,X^T_2,\ldots,X^T_n],~\mbox{and}\\
\cX^{(3)} & = [ \vecop{X_1} ,\vecop{X_2},\ldots,\vecop{X_n}]^T.
\end{align*}
 \end{example}
 
 Similar to the matrix-matrix product, one can also perform a  tensor-matrix  or tensor-tensor multiplication. Of particular interest, we here only consider  tensor-matrix multiplications, which can be performed by means of matricizations; see, e.g.,~\cite{koldatensor09}. For a given tensor $\cX\in\R^{n\times n\times n}$ and  a matrix $\cA\in  \R^{n_1\times n} $, the $\mu$-mode matrix product is denoted by $\cX\times _\mu \cA =: \cY$, i.e., $\cY \in \R^{n_1\times n\times n}$ for $\mu = 1$. In the case of the $\mu$-mode matrix multiplication,  the mode-$\mu$ fiber is multiplied with the matrix $\cA$, which can be written as
\begin{equation*}
 \cY = \cX\times_\mu \cA \Leftrightarrow \cY^{(\mu)} = \cA\cX^{(\mu)}.
\end{equation*}
Furthermore, if a tensor is given as
\begin{equation}\label{eq:def_Z}
 \cZ = \cX \times_1 \cA \times_2 \cB \times_3 \cC,
\end{equation}
where $\cA\in \R^{n_1\times n}$, $\cB\in \R^{n_2\times n}$ and $\cC\in \R^{n_3\times n}$, then the mode-$\mu$ matriciziations  of $\cZ$ satisfy:
\begin{equation}\label{eq:matricization_relations}
 \begin{aligned}
  \cZ^{(1)} &= \cA\cX^{(1)}(\cC\otimes \cB)^T,~
  \cZ^{(2)} = \cB\cX^{(2)}(\cC\otimes \cA)^T,~
  \cZ^{(3)} = \cC\cX^{(3)}(\cB\otimes \cA)^T.
 \end{aligned}
\end{equation}
Using these properties of the tensor products, we now introduce our first result on tensor matricizations.
\begin{lemma}\label{lemma:trace_property}
 Consider tensors $\cX,\cY \in \R^{n\times n\times n}$  and let $\cX^{(i)}$ and $\cY^{(i)}$ denote, respectively, their mode-$i$ matricizations. Then,  
 \begin{equation*}
  \trace{\cX^{(1)} (\cY^{(1)})^T}=\trace{\cX^{(2)} (\cY^{(2)})^T}=\trace{\cX^{(3)} (\cY^{(3)})^T}.
 \end{equation*}
\end{lemma}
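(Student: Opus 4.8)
The plan is to exploit the identity $\trace{A B^T} = \sum_{i,j} A_{ij} B_{ij}$, i.e. the observation that the trace appearing in the statement is nothing but the entrywise (Frobenius) inner product of the two matricizations. Each mode-$\mu$ matricization only rearranges the $n^3$ entries of a tensor into an $n \times n^2$ array through a bijection that depends on $\mu$ alone and not on the tensor itself; applying the \emph{same} bijection to $\cX$ and to $\cY$ therefore leaves the sum $\sum \cX(a,b,c)\,\cY(a,b,c)$ unchanged. Thus all three quantities equal this single coordinate-free sum, and the equalities follow at once. The remaining work is merely to confirm this concretely using the explicit matricizations recorded in the example together with the trace property \eqref{eq:trace1} already at hand.

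Concretely, I would write $\cX$ and $\cY$ through their frontal slices $X_i, Y_i \in \Rnn$ and reduce each of the three traces to a common scalar. For the mode-$1$ case, the block structure $\cX^{(1)} = [X_1, \ldots, X_n]$ gives $\cX^{(1)} (\cY^{(1)})^T = \sum_{i=1}^n X_i Y_i^T$, so that
$$\trace{\cX^{(1)} (\cY^{(1)})^T} = \sum_{i=1}^n \trace{X_i Y_i^T} = \sum_{i=1}^n \vecop{X_i}^T \vecop{Y_i}$$
by \eqref{eq:trace1}. The mode-$2$ case is entirely analogous: the transposed slices produce $\cX^{(2)} (\cY^{(2)})^T = \sum_{i=1}^n X_i^T Y_i$, and $\trace{X_i^T Y_i} = \vecop{X_i}^T \vecop{Y_i}$ yields the same sum.

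For the mode-$3$ case I would read off that the $i$th row of $\cX^{(3)}$ is $\vecop{X_i}^T$ while the $j$th column of $(\cY^{(3)})^T$ is $\vecop{Y_j}$, so the diagonal of $\cX^{(3)} (\cY^{(3)})^T$ consists of the entries $\vecop{X_i}^T \vecop{Y_i}$ and the trace once more collapses to $\sum_{i=1}^n \vecop{X_i}^T \vecop{Y_i}$. With all three traces equal to this common value the result follows. I do not expect a genuine obstacle here: the only point requiring a moment of care is verifying that the mode-$3$ unfolding pairs the slices diagonally under the trace, which is immediate from its row/column layout; everything else is just the observation that the tensor inner product is blind to the choice of unfolding.
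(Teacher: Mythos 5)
Your proposal is correct and follows essentially the same route as the paper: both expand $\cX$ and $\cY$ into frontal slices $X_i, Y_i$ and reduce each of the three traces to the common quantity $\sum_{i=1}^n \vecop{X_i}^T\vecop{Y_i}$ using the identity $\trace{X^TY} = \vecop{X}^T\vecop{Y}$. Your opening observation that all three expressions are the same tensor inner product, merely computed through different but simultaneous rearrangements of the entries, is a clean conceptual summary of why the slice computation must succeed, but the concrete verification you give is the paper's proof.
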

\begin{proof}
 We begin by denoting the $i$th frontal slice of $\cX$ and $\cY$ by $X_i$ and $Y_i$, respectively; see \Cref{fig:tensor_pic}. Thus,
   \begin{align*}
   \trace{\cX^{(1)}(\cY^{(1)})^T} &= \trace{\bbm X_1,X_2, \ldots,X_n\ebm\bbm Y_1,Y_2, \ldots,Y_n\ebm^T}\nonumber\\
   &= \sum_{i=1}^n\trace{ X_iY^T_i}  = \sum_{i=1}^n  \trace{X^T_iY_i}  \\
   &=\trace{\bbm X_1^T,X_2^T, \ldots,X_n^T\ebm\bbm Y_1^T,Y_2^T, \ldots,Y_n^T\ebm^T} 
   =\trace{\cX^{(2)} (\cY^{(2)})^T}.\nonumber
  \end{align*} 
 Furthermore, since $\trace{X^TY} = \vecop{X}^T\vecop{Y}$, this allows us to write
 \begin{align*}
   \trace{\cX^{(1)}(\cY^{(1)})^T} &=  \sum_{i=1}^n\trace{ X^T_iY_i}  = \sum_{i=1}^n \vecop{X_i}^T \vecop{Y_i}.
 \end{align*}
Since the $i$th rows of $\cX^{(3)}$ and $\cY^{(3)}$ are given by $\vecop{X_i}^T$ and $\vecop{Y_i}^T$, respectively, this means $\sum_{i=1}^n \vecop{X_i}^T \vecop{Y_i} = \trace{\cX^{(3)} (\cY^{(3)})^T}$. This concludes the proof.
\end{proof}

Recall that the Hessian $H$ in the QB system~\eqref{sys:QBsys} is of size $n\times n^2$; thus, it can  be interpreted as an unfolding of a tensor $\cH^{n\times n\times n}$. Without loss of generality, we  assume  the Hessian $H$ to be the mode-$1$ matricization of $\cH$, i.e., $H = \cH^{(1)}$. Also,  we assume $\cH$ to be symmetric. This means that for given vectors $u$ and $v$,
\begin{equation}\label{eq:H_symm}
\begin{aligned}
 H(u\otimes v) =  \cH^{(1)}(u\otimes v) &= \cH^{(1)}(v\otimes u) = H(v\otimes u).
 \end{aligned}
\end{equation}
This provides us additional information that the other two matricization modes of $\cH$ are the same, i.e., 
\begin{equation}\label{eq:H2H3_rel}
\cH^{(2)} = \cH^{(3)}.
\end{equation}
 In general, it is not necessary that the Hessian $H$ (mode-1 matrizication of the tensor $\cH$), obtained from the discretization of the governing PDEs satisfies~\eqref{eq:H_symm}. However, as shown in~\cite{morBenB15}, the Hessian $H$ can be modified in such a way that the modified Hessian $\tH$ satisfies~\eqref{eq:H_symm} without any change in the dynamics of the system; thus, for the rest of the paper,    without loss of generality, we assume that the tensor $\cH$ is symmetric.


The additional property that the Hessian is symmetric will allow us to derive some new relationships between matricizations and matrices, that will prove to be crucial ingredients in simplifying the expressions arising in the derivation of  optimality conditions in \Cref{sec:optimalitycondition}. 

\begin{lemma}\label{lemma:RelH1H1ABC}
 Let $\cH \in \R^{n\times n\times n}$  be a $3$rd order tensor, satisfying~\eqref{eq:H_symm} and \eqref{eq:H2H3_rel}, and consider matrices $\cA,\cB,\cC \in \Rnn $. Then,
 \begin{equation}\label{eq:H1AB_property}
  \cH^{(1)}(\cB\otimes \cC)\left(\cH^{(1)}\right)^T =  \cH^{(1)}(\cC\otimes \cB)\left(\cH^{(1)}\right)^T,
 \end{equation}
 and
 \begin{align*}
  \left( \vecop{\cB}\right)^T\vecop{\cH^{(2)}(\cC\otimes \cA)(\cH^{(2)})^T} &= \left(\vecop{ \cC}\right)^T\vecop{\cH^{(2)}(\cB\otimes \cA)(\cH^{(2)})^T} \\
 & = \left(\vecop{ \cA}\right)^T\vecop{\cH^{(1)}(\cC\otimes \cB)(\cH^{(1)})^T}.
 \end{align*}
\end{lemma}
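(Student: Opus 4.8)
The statement splits into two parts, which I would handle with different tools.

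For the identity \eqref{eq:H1AB_property}, the plan is to encode the symmetry \eqref{eq:H_symm} through the commutation (perfect-shuffle) matrix $P\in\R^{n^2\times n^2}$ defined by $P(u\otimes v)=v\otimes u$ for all $u,v\in\R^n$. Since the vectors $u\otimes v$ span $\R^{n^2}$ (the $e_i\otimes e_j$ already form a basis), \eqref{eq:H_symm} is equivalent to the single matrix identity $\cH^{(1)}P=\cH^{(1)}$. I would then combine this with the two standard facts $P(\cB\otimes\cC)=(\cC\otimes\cB)P$ and $P^T=P$ and simply sandwich: insert $P$ to the right of $\cH^{(1)}$ (allowed because $\cH^{(1)}=\cH^{(1)}P$), push it through the Kronecker product to interchange $\cB$ and $\cC$, and then absorb the trailing $P$ into $(\cH^{(1)})^T$ using $P=P^T$ together with $\cH^{(1)}P=\cH^{(1)}$ once more. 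This yields \eqref{eq:H1AB_property} in a few lines.

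For the chain of three equalities, the plan is to realize all three expressions as the three mode-matricization traces of a \emph{single} auxiliary tensor, so that \Cref{lemma:trace_property} does the work. Concretely, I would set $\cZ:=\cH\times_1\cA^T\times_2\cB^T\times_3\cC^T$ and compute its matricizations from \eqref{eq:matricization_relations}, obtaining $\cZ^{(1)}=\cA^T\cH^{(1)}(\cC\otimes\cB)$, $\cZ^{(2)}=\cB^T\cH^{(2)}(\cC\otimes\cA)$, and $\cZ^{(3)}=\cC^T\cH^{(3)}(\cB\otimes\cA)$, where the Kronecker transposes collapse because $(\cC^T\otimes\cB^T)^T=\cC\otimes\cB$ and similarly for the other modes. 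I would then form $\trace{\cZ^{(i)}(\cH^{(i)})^T}$ for $i=1,2,3$ and rewrite each using \eqref{eq:trace1}: mode $2$ gives $(\vecop{\cB})^T\vecop{\cH^{(2)}(\cC\otimes\cA)(\cH^{(2)})^T}$ (the left-hand side), mode $1$ gives $(\vecop{\cA})^T\vecop{\cH^{(1)}(\cC\otimes\cB)(\cH^{(1)})^T}$ (the right-hand side), and mode $3$ gives $(\vecop{\cC})^T\vecop{\cH^{(3)}(\cB\otimes\cA)(\cH^{(3)})^T}$, which becomes the middle term precisely after replacing $\cH^{(3)}$ by $\cH^{(2)}$ via \eqref{eq:H2H3_rel}. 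Since \Cref{lemma:trace_property} (applied to the tensors $\cZ$ and $\cH$) asserts that these three traces coincide, the three displayed quantities are equal.

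The main obstacle I anticipate is purely bookkeeping: keeping the order of the Kronecker factors and the transposes consistent when specializing \eqref{eq:matricization_relations} to $\cA^T,\cB^T,\cC^T$, and correctly matching which mode produces which of the three target expressions. The one genuinely substantive (as opposed to mechanical) ingredient is the symmetry of $\cH$, which enters twice: through $\cH^{(1)}P=\cH^{(1)}$ for \eqref{eq:H1AB_property}, and through $\cH^{(2)}=\cH^{(3)}$ in \eqref{eq:H2H3_rel} to convert the mode-$3$ trace into the middle term of the chain. Without this symmetry the mode-$3$ expression would retain $\cH^{(3)}$ instead of $\cH^{(2)}$ and the chain would fail to close.
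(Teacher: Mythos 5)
Your proposal is correct and matches the paper's own proof in essentials: for \eqref{eq:H1AB_property} the paper likewise inserts a perfect-shuffle permutation $S$ (its version of your $P$) and reduces everything to the identity $\cH^{(1)}S=\cH^{(1)}$, differing only in that it verifies this identity by an explicit index computation where your spanning argument over the vectors $u\otimes v$ is quicker. For the chain of three equalities, the paper works with precisely your auxiliary tensor---introduced there as $\cL$ via its mode-2 matricization $\cL^{(2)}=\cB^T\cH^{(2)}(\cC\otimes\cA)$ rather than as $\cH\times_1\cA^T\times_2\cB^T\times_3\cC^T$---and closes the loop using \Cref{lemma:trace_property} together with $\cH^{(2)}=\cH^{(3)}$, exactly as you do.
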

\begin{proof}
 We begin by proving the  relation in~\eqref{eq:H1AB_property}. The order in the Kronecker product can be changed  via pre- and post-multiplication by  appropriate permutation matrices; see~\cite[Sec. 3]{henderson1981vec}. Thus,
 \begin{equation*}
  \cB\otimes \cC = S(\cC\otimes \cB)S^T,
 \end{equation*}
where $S$ is a permutation matrix, given as $S = \sum_{i=1}^n((e^n_i)^T\otimes I_n\otimes e^n_i)$. We can then write
\begin{equation}\label{eq:HAB}
  \cH^{(1)}(\cB\otimes \cC)\left(\cH^{(1)}\right)^T =  \cH^{(1)}S(\cC\otimes \cB)\left(\cH^{(1)}S\right)^T.
\end{equation}
We now  manipulate the term $\cH^{(1)}S$: 
	\begin{align}\label{eq:H1s_relation}
		\cH^{(1)}S &= \sum_{i=1}^n\cH^{(1)}((e^n_i)^T\otimes I_n\otimes e^n_i).
		\end{align}
Furthermore, we can write  $I_n$ in the Kronecker form as
\begin{equation}\label{eq:Identity_Kron}
 I_n = \sum_{j=1}^n(e^n_j)^T\otimes e^n_j ,
\end{equation}
 and since for a vector $f\in \R^q$, $f^T\otimes f = ff^T $, we can write \eqref{eq:Identity_Kron} in another form as
 \begin{equation}\label{eq:Identity_Kron2}
 I_n =  \sum_{j=1}^ne^n_j(e^n_j)^T. 
 \end{equation}
    Substituting  these relations in \eqref{eq:H1s_relation} leads to
		\begin{align}
		\cH^{(1)}S &= \sum_{i=1}^n\sum_{j=1}^n\cH^{(1)}((e^n_i)^T\otimes (e^n_j)^T\otimes e^n_j\otimes e^n_i)\nonumber\\
		&= \sum_{i=1}^n\sum_{j=1}^n\cH^{(1)}\left(e^n_j\otimes e^n_i\right)\left((e^n_i)^T\otimes (e^n_j)^T\right) && \left(\because  \text{for}~ f\in \R^q,f^T\otimes f = ff^T\right)\nonumber \\
		&= \sum_{i=1}^n\sum_{j=1}^n\cH^{(1)}(e^n_i\otimes e^n_j)((e^n_i)^T\otimes (e^n_j)^T). && \left(\because \text{the relation~}\eqref{eq:H_symm}\right) \label{eq:HS1}
		\end{align}
		Next, we use a tensor-multiplication property in the above equation, that is,
		\begin{equation}\label{eq:kronprodpro}
		(\cP\otimes \cQ)(\cR\otimes \cS) = (\cP\cR\otimes \cQ\cS),
		\end{equation}
where $\cP,\cQ,\cR$ and $\cS$ are of compatible dimensions. Using the Kronecker product property~\eqref{eq:kronprodpro} in \eqref{eq:HS1}, we obtain
		\begin{align*}
		\cH^{(1)}S 	 	&= \cH^{(1)}  \left(\sum_{i=1}^n  e^n_i (e^n_i)^T\otimes \sum_{j=1}^n  e^n_j (e^n_j)^T \right) \\
	& = \cH^{(1)}(I_n\otimes I_n) = \cH^{(1)}. \qquad &&\left(\text{from}~\eqref{eq:Identity_Kron2}\right) 
	\end{align*}
Substituting the above relation in~\eqref{eq:HAB} proves~\eqref{eq:H1AB_property}. For the second part, we utilize the trace property~\eqref{eq:trace1} to obtain
\begin{align*}
 \left(\vecop{\cB}\right)^T\vecop{\cH^{(2)}(\cC\otimes \cA)(\cH^{(2)})^T} & = \trace{\underbrace{\cB^T\cH^{(2)}(\cC\otimes \cA)}_{\cL^{(2)}}(\cH^{(2)})^T},
\end{align*}
where $\cL^{(2)} \in \R^{n\times n^2}$ can be considered as a mode-$2$ matricization of a tensor $\cL^{n\times n\times n}$. Using \Cref{lemma:trace_property} and the relations~\eqref{eq:matricization_relations}, we obtain
\begin{align*}
 \trace{\cL^{(2)}\left(\cH^{(2)}\right)^T} &= \trace{\cL^{(3)}\left(\cH^{(3)}\right)^T} = \trace{\cC^T\cH^{(3)}(\cB\otimes \cA) \left( \cH^{(3)}\right)^T} \\
 &=\trace{\cC^T\cH^{(2)}(\cB\otimes \cA) \left(\cH^{(2)}\right)^T}\qquad\qquad \qquad\left(\text{using}~\eqref{eq:H2H3_rel} \right)\\
 &= \left(\vecop{\cC}\right)^T\vecop{\cH^{(2)}(\cB\otimes \cA) \left(\cH^{(2)}\right)^T}.
\end{align*}
Furthermore, we also have
\begin{align*}
 \trace{\cL^{(2)}\left(\cH^{(2)}\right)^T} &= \trace{\cL^{(1)}\left(\cH^{(1)}\right)^T} = \trace{\cA^T\cH^{(1)}(\cC\otimes \cB) \left( \cH^{(1)}\right)^T}\\
  &= \left(\vecop{\cA}\right)^T\vecop{\cH^{(1)}(\cC\otimes \cB) \left( \cH^{(1)}\right)^T} ,
 \end{align*}
which completes the proof.
\end{proof}

Next, we prove  the connection between a permutation matrix and the Kronecker product.
\begin{lemma}\label{lemma:change_kron}
 Consider  two matrices $X,Y\in\Rnm$. Define the permutation matrix $T_{(n,m)} \in \{0,1\}^{n^2m^2\times n^2m^2}$ as
\begin{equation}\label{eq:perm_kronvec}
\begin{aligned}
T_{(n,m)} &= I_m\otimes  \bbm I_m \otimes e^n_1,\ldots,I_m \otimes e^n_n \ebm \otimes I_n.
\end{aligned}
\end{equation}
Then,
\begin{equation*}
 \vecop{X\otimes Y} = T_{(n,m)}\left(\vecop{X}\otimes \vecop{Y}\right).
\end{equation*}
\end{lemma}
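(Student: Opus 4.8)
The plan is to expand both sides in the standard-basis column form and then recognize the central Kronecker factor of $T_{(n,m)}$ as a commutation (order-swapping) matrix. Writing $M := \bbm I_m \otimes e^n_1,\ldots,I_m \otimes e^n_n \ebm$, note that $T_{(n,m)} = I_m \otimes M \otimes I_n$, so it suffices to understand how $M$ permutes a Kronecker product. First I would record two elementary facts that follow from~\eqref{eq:trace2}: for $a \in \R^{n}$ and $b \in \R^{m}$ one has $\vecop{ab^T} = b \otimes a$, and consequently $\vecop{X} = \sum_{j=1}^m e^m_j \otimes X(:,j)$ (and similarly for $Y$), since $X = \sum_j X(:,j)(e^m_j)^T$.

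Next I would expand the right-hand side. Using bilinearity of the Kronecker product and the two facts above,
\[
\vecop{X}\otimes\vecop{Y} = \sum_{j=1}^m\sum_{k=1}^m \big(e^m_j \otimes X(:,j)\big)\otimes\big(e^m_k \otimes Y(:,k)\big) = \sum_{j,k} e^m_j \otimes \big(X(:,j)\otimes e^m_k\big)\otimes Y(:,k).
\]
Applying $T_{(n,m)} = I_m \otimes M \otimes I_n$ factorwise leaves the outer $e^m_j$ and the trailing $Y(:,k)$ untouched and acts by $M$ on the middle block $X(:,j)\otimes e^m_k \in \R^{mn}$. The central claim is that $M$ swaps the order of this product, i.e.\ $M\big(X(:,j)\otimes e^m_k\big) = e^m_k \otimes X(:,j)$; granting this, $T_{(n,m)}\big(\vecop{X}\otimes\vecop{Y}\big) = \sum_{j,k}\big(e^m_j \otimes e^m_k\big)\otimes\big(X(:,j)\otimes Y(:,k)\big)$. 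Independently, for the left-hand side I would use the mixed-product property~\eqref{eq:kronprodpro} to get $(X\otimes Y)(e^m_j \otimes e^m_k) = X(:,j)\otimes Y(:,k)$, so that $X(:,j)\otimes Y(:,k)$ is exactly the $\big((j-1)m+k\big)$-th column of $X\otimes Y$; combined with the column-stacking identity $\vecop{X\otimes Y} = \sum_c e^{m^2}_c \otimes (X\otimes Y)(:,c)$ and $e^m_j \otimes e^m_k = e^{m^2}_{(j-1)m+k}$, this produces the same double sum. Matching the two expansions term by term then finishes the proof.

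The main obstacle is the commutation identity $M\big(X(:,j)\otimes e^m_k\big) = e^m_k \otimes X(:,j)$, which is where the specific definition of $M$ does the work. I would prove it by partitioning $M = \bbm B_1,\ldots,B_n \ebm$ with $B_i = I_m \otimes e^n_i$ and splitting the input $X(:,j)\otimes e^m_k$ into its $n$ blocks $x_{ij}\,e^m_k$, so that $M\big(X(:,j)\otimes e^m_k\big) = \sum_{i=1}^n x_{ij}\,(I_m \otimes e^n_i)e^m_k$. A single application of the mixed-product property~\eqref{eq:kronprodpro} gives $(I_m \otimes e^n_i)e^m_k = e^m_k \otimes e^n_i$, whence the sum collapses to $e^m_k \otimes \sum_i x_{ij} e^n_i = e^m_k \otimes X(:,j)$, as needed. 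Everything else is routine bookkeeping with column-major vectorization.
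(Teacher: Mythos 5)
Your proof is correct, but it takes a genuinely different route from the paper's. The paper works entirely on the left-hand side: it stacks $\vecop{X\otimes Y}$ into block rows $\vecop{x_i\otimes Y}$ over the columns of $X$, applies the identity $\vecop{(x_i\otimes I_n)Y} = (I_m\otimes x_i\otimes I_n)\vecop{Y}$, expands only $x_i$ in the standard basis, and then regroups the resulting operator so that $T_{(n,m)}$ factors out in front of $\vecop{X}\otimes\vecop{Y}$; the vector $\vecop{Y}$ is never decomposed. You instead expand \emph{both} sides as double sums of elementary Kronecker terms over the columns of $X$ and of $Y$, and reduce the entire statement to the commutation identity $M\left(a\otimes e_k^m\right) = e_k^m\otimes a$ for the middle factor $M$ of $T_{(n,m)} = I_m\otimes M\otimes I_n$, which you verify blockwise via the mixed-product property. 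Your route has the conceptual payoff of exposing the structure of $T_{(n,m)}$: its middle factor is precisely a commutation (swap) matrix on $\R^n\otimes\R^m$, the same device the paper invokes through the matrix $S$ in the proof of \Cref{lemma:RelH1H1ABC}, so your argument unifies the two lemmas; the cost is heavier index bookkeeping (double sums and column-index matching on both sides). The paper's manipulation is more compact, avoids expanding $Y$, and produces the factored form of the permutation directly, but leaves the swap interpretation implicit.
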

\begin{proof}
Let us denote the $i$th columns of $X$ and $Y$ by $x_i$ and $y_i$, respectively.   We can then write
 \begin{align}\label{eq:kronXY1}
  \vecop{X\otimes Y} & = \bbm \vecop{x_1\otimes Y}\\ \vdots \\ \vecop{x_m\otimes Y} \ebm.
 \end{align}
Now we concentrate on the $i$th block row of $\vecop{X\otimes Y}$, which, using~\eqref{eq:trace2} and \eqref{eq:kronprodpro}, can be written as
\begin{align}
 \vecop{x_i\otimes Y} &= \vecop{(x_i\otimes I_n)Y} = \left(I_m\otimes x_i\otimes I_n\right)\vecop{Y}\nonumber\\
 &= \left(I_m\otimes \left[ x_i^{(1)}e_1^n + \cdots + x_i^{(n)}e_n^n \right]\otimes I_n\right)\vecop{Y}, \label{eq:kronPro1}
\end{align}
where $x_i^{(j)}$ is the $(j,i)$th entry of the matrix $X$. An alternative way to write~\eqref{eq:kronPro1} is
\begin{align*}
 \vecop{x_i\otimes Y} &=  \left[I_m\otimes e_1^n \otimes I_n, \ldots, I_m\otimes e_n^n\otimes I_n \right] (x_i\otimes I_{nm}) \vecop{Y}\\
  &= \left(\left[I_m\otimes e_1^n , \ldots, I_m\otimes e_n^n \right]\otimes I_n \right) (x_i\otimes \vecop{Y}).
\end{align*}
This yields
\begin{align*}
  \vecop{X\otimes Y} & = \bbm \left(\left[I_m\otimes e_1^n , \ldots, I_m\otimes e_n^n \right]\otimes I_n \right) (x_1\otimes \vecop{Y}) \\ \vdots \\ \left(\left[I_m\otimes e_1^n , \ldots, I_m\otimes e_n^n \right]\otimes I_n \right) (x_m\otimes \vecop{Y}) \ebm \\
  & =  \left(I_m\otimes \left[I_m\otimes e_1^n , \ldots, I_m\otimes e_n^n \right]\otimes I_n \right)\bbm  x_1\otimes \vecop{Y} \\ \vdots \\  x_m\otimes \vecop{Y} \ebm\\
   & =  \left(I_m\otimes \left[I_m\otimes e_1^n , \ldots, I_m\otimes e_n^n \right]\otimes I_n \right) \left(\bbm  x_1 \\ \vdots \\  x_m\ebm\otimes \vecop{Y} \right) \\
   & =  \left(I_m\otimes \left[I_m\otimes e_1^n , \ldots, I_m\otimes e_n^n \right]\otimes I_n \right) \left( \vecop{X}\otimes \vecop{Y} \right),
 \end{align*}
which proves the assertion.
\end{proof}
\Cref{lemma:change_kron} will be utilized  in simplifying the error expressions in the next section.
\section{\texorpdfstring{\boldmath{$\cH_2$}}{H2}-Norm for QB Systems and Optimality Conditions}\label{sec:optimalitycondition}
In this section, we first define the  $\cH_2$-norm for the QB systems~\eqref{sys:QBsys} and its truncated version.   Then, based on a truncated  $\cH_2$ measure, we derive  first-order necessary conditions for optimal model reduction. These  optimality conditions will naturally lead to a numerical  algorithm to construct quasi-optimal reduced models for QB systems those are independent of any training data.  The proposed model reduction framework extends the optimal $\cH_2$ methodology from  linear \cite{morGugAB08} and bilinear  systems \cite{morBenB12b,flagg2015multipoint}  to  QB nonlinear systems.
\subsection{ \texorpdfstring{\boldmath{$\cH_2$}}{H2}-norm of QB systems}
  In order to define the $\cH_2$-norm for QB systems and its truncated version, we first require the input/output representation for QB systems. In other words, we aim at obtaining the solution of QB systems with the help of \emph{Volterra} series as done for bilinear systems,  e.g. as  in~\cite[Section 3.1]{rugh1981nonlinear}. For this, one can utilize the \emph{variational analysis}~\cite[Section 3.4]{rugh1981nonlinear}. As a first step, we consider the external force $u(t)$ in \eqref{sys:QBsys} that is multiplied by a scaler factor $\alpha$. And since the QB system falls under the class of  linear-analytic systems, we can write the solution $x(t)$ of~\eqref{sys:QBsys} as 
  \begin{equation*}
  x(t) = \sum_{s=1}^\infty \alpha^sx_s(t),
  \end{equation*}
  where $x_s(t) \in \Rn$. Substituting the expression of $x(t)$ in terms of $x_s(t)$ and replacing input $u(t)$ by $\alpha u(t)$ in~\eqref{sys:QBsys}, we get
  \begin{equation}
  \begin{aligned}
  \left(\sum_{s=1}^\infty \alpha^s\dot x_s(t)\right) &= A(\sum_{s=1}^\infty \alpha^sx_s(t)) + H\left(\sum_{s=1}^\infty \alpha^sx_s(t)\right)\otimes \left(\sum_{s=1}^\infty \alpha^sx_s(t)\right) \\
  &\quad + \sum_{k=1}^m\alpha N_k\sum_{s=1}^\infty \alpha^sx_s(t)u_k(t) + \alpha B u(t).
  \end{aligned}
  \end{equation}
By comparing the coefficients of $\alpha^i, i \in \{1,2,\ldots\},$ leads to   an infinite cascade of coupled linear systems as follows:
  \begin{equation}\label{eq:x1x2}
  \begin{aligned}
  \dot x_1(t) &= Ax_1(t) + Bu(t),\\
    \dot x_2(t) &= Ax_2(t) + Hx_1(t)\otimes x_1(t) + \sum_{k=1}^mN_kx_1u_k(t),\\
      \dot x_s(t) &= Ax_s(t) + \sum_{ \substack{ i,j\geq 1 \\  i+j = s}}Hx_i(t)\otimes x_j(t) + \sum_{k=1}^mN_kx_{s-1}(t)u(t),\quad s\geq 3. 
  \end{aligned}
  \end{equation}
  Then, let $\alpha =1$ so that $x(t)  = \sum_{s=1}^\infty x_s(t)$, where $x_s(t)$ solves a coupled linear differential equation~\eqref{eq:x1x2}.   The equation  for $x_1(t)$ corresponds to a linear system, thus allowing us to write the expression for $x_1(t)$ as a  convolution: 
    \begin{subequations}
    \begin{align}
    x_1(t) &= \int_0^te^{At_1}Bu(t-t_1)dt_1. \label{eq:x1_sol}
        \end{align}
    \end{subequations}
    Using the expression for $x_1(t)$, we can obtain an explicit expression for $x_2(t)$:
     \begin{subequations}
     	\begin{align*}   
    x_2(t) &= \int\limits_0^t\int\limits_0^{t-t_3}\int\limits_0^{t-t_3} e^{At_3}He^{At_2}B\otimes e^{At_1}B (u(t-t_2-t_3))\otimes u(t-t_1-t_3)dt_1dt_2dt_3 \label{eq:x2_sol}\\
    &~~+ \sum_{k=1}^m\int\limits_0^t\int\limits_0^{t-t_2}e^{At_2}N_ke^{At_1}Bu(t-t_1-t_2)u_k(t-t_2)dt_1dt_2.
    \end{align*}
    \end{subequations}
Similarly, one can write down  explicit expressions for $x_s(t), s\geq 3$ as well, but the notation and expression become tedious, and we skip them for brevity. Then, we can write the output $y(t)$ of the QB system as $y(t) = \sum_{s=1}^\infty Cx_s(t)$, leading to the input/output mapping as:
  \begin{equation}\label{eq:inputoutputmapping}
  \begin{aligned}
  y(t) &= \int\limits_0^tCe^{At_1}Bu(t-t_1)dt_1 + \\
  &   \int\limits_0^t\int\limits_0^{t-t_3}\int\limits_0^{t-t_3} e^{At_3}H\left(e^{At_2}B\otimes e^{At_1}B\right) (u(t-t_2-t_3))\otimes u(t-t_1-t_3)dt_1dt_2dt_3 + \\
  &  \int\limits_0^t\int\limits_0^{t-t_2}e^{At_2}\bbm N_1,\ldots, N_m\ebm \left(I_m\otimes e^{At_1}B\right)\left(u(t-t_2)\otimes u(t-t_1-t_2)\right)dt_1dt_2 + \cdots.
  \end{aligned}
  \end{equation}
Examining the structure of \eqref{eq:inputoutputmapping} reveals that the \emph{kernels} $f_i(t_1,\ldots,t_i)$  of \eqref{eq:inputoutputmapping} is given by the recurrence formula 
  \begin{equation}\label{eq:kernels_QB}
  f_i(t_1,\ldots,t_i) = Cg_i(t_1,\ldots,t_i),
  \end{equation}
  where
  \begin{equation}\label{eq:P_mapping}
  \begin{aligned}
  g_1(t_1) & = e^{At_1}B,\\
  g_2(t_1,t_2) & = e^{At_2}\bbm N_1,\ldots,N_m\ebm \left(I_m\otimes e^{At_1}B\right),\\
  g_i(t_1,\ldots,t_i) & = e^{At_i}\left[H\left[g_1(t_1)\otimes g_{i-2}(t_2,\ldots,t_{i-1}),\ldots,g_{i-2}(t_1,\ldots,t_{i-2})\otimes g_1(t_{i-1})\right],\right. \\
  &\qquad\quad \left. \bbm N_1,\ldots N_m\ebm \left(I_m\otimes g_{i-1}\right)\right],\quad i\geq 3.
  \end{aligned}
  \end{equation}
 As shown in~\cite{morZhaL02}, the $\cH_2$-norm of a bilinear system can be defined in terms of an infinite series of kernels, corresponding to the input/output mapping. Similarly, we, in the following, define the $\cH_2$-norm of a QB system based on these kernels.
  \begin{definition}\label{def:h2norm}
  	Consider the QB system~\eqref{sys:QBsys} whose kernels of Volterra series are defined in~\eqref{eq:kernels_QB}.  Then, the $\cH_2$-norm of the QB system is given by
  	\begin{equation}\label{eq:h2norm}
  	\|\Sigma\|_{\cH_2} :=\sqrt{\trace{ \sum_{i=1}^\infty  \int_0^\infty\cdots \int_0^\infty f_i(t_1,\ldots,t_i)f_i^T(t_1,\ldots,t_i) dt_1\ldots dt_i} }.
  	\end{equation}  
  \end{definition}
  
  This definition of the $\cH_2$-norm is not suitable for computation. Fortunately, we can find an 
  alternative way, to compute the norm in a numerically efficient  way using matrix equations.  We know from the case of linear and bilinear systems that the $\cH_2$-norms of these systems can  be computed in terms of the certain system Gramians. This is also the case for QB systems.  The algebraic Gramians for QB systems is recently studied in~\cite{morBTQBgoyal}.  So, in the following, we extend such relations between the $\cH_2$-norm, see~\Cref{def:h2norm} and the systems Gramians  to QB systems. 
  \begin{lemma}\label{lemma:Gramian_PQ}
Consider a QB system with a stable matrix $A$, and  let $P$ and $Q$, respectively, be the controllability and observability Gramians of the system, which are the unique  positive semidefinite solutions of the following quadratic-type Lyapunov equations:
  	\begin{align}
AP + PA^T + H(P\otimes P)H^T + \sum_{k=1}^mN_kPN^T_k + BB^T &= 0, ~~\text{and}\label{eq:True_Cont}\\
  	  	A^TQ + QA + \cH^{(2)}(P\otimes Q)\left(\cH^{(2)}\right)^T + \sum_{k=1}^mN^T_kQN_k + C^TC &= 0. \label{eq:True_Obs}
  	  	\end{align}
  	  	Assuming the $\cH_2$-norm of a QB system exists, i.e., the infinite series in~\eqref{eq:h2norm} converges, then the $\cH_2$-norm of a QB system can be computed as 
  	  	\begin{equation}\label{eq:PQ_trace_h2}
  	  	\|\Sigma\|_{\cH_2}:= \sqrt{\trace{CPC^T}} = \sqrt{\trace{B^TQB}}.
  	  	\end{equation}
  \end{lemma}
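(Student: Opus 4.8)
The plan is to realize the controllability Gramian $P$ explicitly in terms of the Volterra kernels and then verify that this explicit object solves \eqref{eq:True_Cont}. Concretely, I would set
\[
P := \sum_{i=1}^\infty \int_0^\infty \cdots \int_0^\infty g_i(t_1,\ldots,t_i)\, g_i^T(t_1,\ldots,t_i)\, dt_1\cdots dt_i,
\]
with $g_i$ as in \eqref{eq:P_mapping}. Since $f_i = Cg_i$ by \eqref{eq:kernels_QB}, pulling $C$ and $C^T$ out of the integral in \eqref{eq:h2norm} immediately gives $\|\Sigma\|_{\cH_2}^2 = \trace{CPC^T}$, so the first identity reduces to showing that this $P$ is the (unique positive semidefinite) solution of \eqref{eq:True_Cont}. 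The second identity $\trace{CPC^T} = \trace{B^TQB}$ I would then establish separately by a trace manipulation of the two Gramian equations.

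For the first part, each kernel factors as $g_i(t_1,\ldots,t_i) = e^{At_i}M_i(t_1,\ldots,t_{i-1})$, where $M_i$ is the bracketed term in \eqref{eq:P_mapping}. Writing $P_i$ for the $i$-th summand and integrating the outermost variable $t_i$ last, $P_i = \int_0^\infty e^{At_i}\mathcal{M}_i e^{A^Tt_i}\,dt_i$ with $\mathcal{M}_i := \int\cdots\int M_iM_i^T\,dt_1\cdots dt_{i-1}$; for stable $A$ this solves $AP_i + P_iA^T + \mathcal{M}_i = 0$. Summing over $i$ (legitimate once the series for $P$ converges, which is the standing assumption) yields $AP + PA^T + \sum_i \mathcal{M}_i = 0$, so it remains to identify $\sum_i \mathcal{M}_i$. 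The key observation is that the two factors in each Kronecker term $g_j\otimes g_{i-1-j}$ depend on disjoint blocks of time variables, so the multivariable integral factorizes: using $(\cP\otimes \cQ)(\cR\otimes \cS) = (\cP\cR\otimes \cQ\cS)$ one gets $\int (g_jg_j^T)\otimes(g_{i-1-j}g_{i-1-j}^T) = P_j\otimes P_{i-1-j}$, while each $\bbm N_1,\ldots,N_m\ebm(I_m\otimes g_{i-1})$ block integrates to $\sum_k N_k P_{i-1} N_k^T$. This gives $\mathcal{M}_1 = BB^T$, $\mathcal{M}_2 = \sum_k N_k P_1 N_k^T$, and, for $i\ge3$, $\mathcal{M}_i = H\big(\sum_{j=1}^{i-2} P_j\otimes P_{i-1-j}\big)H^T + \sum_k N_k P_{i-1} N_k^T$.

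The remaining step is reindexing. Collecting the bilinear contributions gives $\sum_k N_k\big(\sum_{l\ge1}P_l\big)N_k^T = \sum_k N_k P N_k^T$, and setting $l=i-1$ the quadratic terms become $H\big(\sum_{l\ge2}\sum_{j=1}^{l-1} P_j\otimes P_{l-j}\big)H^T$; since the pair $(j,l-j)$ ranges over all pairs of positive integers, bilinearity of $\otimes$ collapses this to $H(P\otimes P)H^T$. Hence $\sum_i\mathcal{M}_i = BB^T + \sum_k N_k P N_k^T + H(P\otimes P)H^T$, which is exactly \eqref{eq:True_Cont}, identifying $P$ as the controllability Gramian. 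For the second identity, I would multiply \eqref{eq:True_Cont} on the left by $Q$ and \eqref{eq:True_Obs} on the left by $P$, take traces, and subtract: cyclicity of the trace cancels the linear terms $\trace{Q(AP+PA^T)} = \trace{P(A^TQ+QA)}$ and equates $\sum_k\trace{QN_kPN_k^T} = \sum_k\trace{PN_k^TQN_k}$, leaving $\trace{B^TQB}$ against $\trace{CPC^T}$. The only nonroutine matching is between $\trace{QH(P\otimes P)H^T}$ and $\trace{P\cH^{(2)}(P\otimes Q)(\cH^{(2)})^T}$; rewriting both via \eqref{eq:trace1} as $\vecop{\cdot}^T\vecop{\cdot}$, these are precisely the third and first expressions in \Cref{lemma:RelH1H1ABC} with $\cA=Q$, $\cB=\cC=P$, hence equal.

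I expect the main obstacle to be the combinatorial computation of $\mathcal{M}_i$ and the reindexing that produces $P\otimes P$: one must carefully track which time variables feed each Kronecker factor to justify factoring the integrals, and verify the interchange of the infinite sum with the linear Lyapunov solution map (covered by the convergence hypothesis). Matching the quadratic terms in the second identity is conceptually the crux, and it is exactly where the symmetry of $\cH$, through \Cref{lemma:RelH1H1ABC}, becomes indispensable.
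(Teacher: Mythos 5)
Your proposal is correct, but it takes a genuinely different route from the paper in both halves of the argument. For the identity $\|\Sigma\|_{\cH_2}^2 = \trace{CPC^T}$, the paper does not prove that the infinite sum of kernel integrals solves \eqref{eq:True_Cont}; it simply cites the reference on balanced truncation for QB systems for that fact. Your factorization $g_i = e^{At_i}M_i$, the Lyapunov equation $AP_i + P_iA^T + \mathcal{M}_i = 0$ for each summand, and the reindexing $\sum_{l\geq 2}\sum_{j=1}^{l-1} P_j\otimes P_{l-j} = P\otimes P$ supply a self-contained derivation of that step, at the cost of the combinatorial bookkeeping (which you carry out correctly, the disjointness of time variables being exactly what justifies factoring the integrals through the mixed-product property). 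For the identity $\trace{CPC^T} = \trace{B^TQB}$, the paper proceeds by vectorizing \eqref{eq:True_Cont}, inverting the Kronecker operator $\cG$, defining $Q_v = \cG^{-T}(C^T\otimes C^T)\cI_p$, showing via \Cref{lemma:trace_property}, \Cref{lemma:RelH1H1ABC}, and \Cref{lemma:change_kron} that the matrix $\tQ$ with $\vecop{\tQ}=Q_v$ solves \eqref{eq:True_Obs}, and then invoking uniqueness of $Q$ to conclude. Your multiply-and-trace argument is shorter and structurally cleaner: cyclicity of the trace cancels the linear and bilinear terms, and the only nontrivial step is matching $\trace{QH(P\otimes P)H^T}$ with $\trace{P\,\cH^{(2)}(P\otimes Q)(\cH^{(2)})^T}$, which is precisely \Cref{lemma:RelH1H1ABC} with $\cA=Q$, $\cB=\cC=P$ (using symmetry of $P$ and $Q$ to apply \eqref{eq:trace1}). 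Both proofs thus rest on the same crux — the symmetry of $\cH$ encoded in \Cref{lemma:RelH1H1ABC} — but your version buys two things: it never needs the operator $\cG$ to be invertible, and it needs only that $P$ and $Q$ are symmetric solutions of their respective equations rather than appealing to uniqueness; what the paper's longer route buys is an explicit formula for $\vecop{Q}$ in terms of system data, a representation it reuses in spirit in the proof of \Cref{lemma:H2_norm}.
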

  \begin{proof}
  	We begin with the definition of the $\cH_2$-norm of a QB system, that is,
  	  	\begin{align*}
  	  	\|\Sigma_{QB}\|_{\cH_2} &= \sqrt{\trace{\sum_{i=1}^\infty  \int_0^\infty\cdots \int_0^\infty f_i(t_1,\ldots,t_i)f_i^T(t_1,\ldots,t_i) dt_1\ldots dt_i}}\\
  	  	&= \sqrt{ \trace{C \left(\sum_{i=1}^\infty\int_0^\infty\cdots \int_0^\infty g_i(t_1,\ldots,t_i)g_i^T(t_1,\ldots,t_i) dt_1\ldots dt_i \right)C^T} },
  	  	\end{align*}  
where $f_i(t_1,\ldots,t_i)$ and $g_i(t_1,\ldots,t_i)$ are defined in \eqref{eq:kernels_QB} and\eqref{eq:P_mapping}, respectively. It is shown in~\cite{morBTQBgoyal} that 
\begin{equation}
\left(\sum_{i=1}^\infty\int_0^\infty\cdots \int_0^\infty g_i(t_1,\ldots,t_i)g_i^T(t_1,\ldots,t_i) dt_1\ldots dt_i \right) = P,
\end{equation}
 where $P$ solves~\eqref{eq:True_Cont} if  the  sequence with infinite terms converges. Thus,
\begin{equation*}
	\|\Sigma\|_{\cH_2}  =  \sqrt{\trace{CPC^T}}.
\end{equation*}
Next, we prove that $\trace{CPC^T} = \trace{B^TQB}$, where $Q$ solves~\eqref{eq:True_Obs}. Making use of Kronecker properties~\eqref{eq:tracepro}, we can write $\trace{CPC^T}$ as
\begin{align*}
\trace{CPC^T} & = \cI_p^T (C\otimes C)\vecop{P}.
\end{align*}
Taking $\vecop{\cdot}$ both sides of \eqref{eq:True_Cont}, we obtain
\begin{equation}
\left(A\otimes I_n + I_n\otimes A + \sum_{k=1}^mN_k\otimes N_k \right)\vecop{P} + (H\otimes H) \vecop{P\otimes P} + (B\otimes B)\cI_m = 0.
\end{equation}
Using~\Cref{lemma:change_kron} in the above equation and performing some simple manipulations yield an expression for $\vecop{P}$ as
\begin{equation*}
\vecop{P}  =  \cG^{-1}(B\otimes B)\cI_m  =: P_v,
\end{equation*}
where 
\begin{equation*}
\cG = -\left(A\otimes I_n + I_n\otimes A + \sum_{k=1}^mN_k\otimes N_k + (H\otimes H) T_{(n,n)}(I_{n^2}\otimes \vecop{P})\right).
\end{equation*}
Thus,  
\begin{equation}\label{eq:H2_P_Q_rel}
\trace{CPC^T} = \cI_p^T(C\otimes C)\cG^{-1}(B\otimes B)\cI_m = \cI_m^T(B^T\otimes B^T)\cG^{-T}(C^T\otimes C^T)\cI_p.
\end{equation}
Now, let $Q_v =   \cG^{-T}(C^T\otimes C^T)\cI_p^T $. As a result, we obtain
\begin{equation*}
\begin{aligned}
(C^T\otimes C^T)\cI_p^T & = \vecop{C^TC} = \cG^{T}Q_v\\
&= -\left(A^T\otimes I_n + I_n\otimes A^T + \sum_{k=1}^mN^T_k\otimes N^T_k \right)Q_v \\
&\qquad\qquad\qquad\qquad\qquad+  \left(\left(H\otimes H\right) T_{(n,n)}\left(I_{n^2}\otimes P_v\right)\right)^TQ_v.
\end{aligned}
\end{equation*}
Next, we consider a matrix $\tQ$ such that $\vecop{\tQ} = Q_v$, which further simplifies the above equation as
\begin{align}\label{eq:deriveQ_dual1}
\vecop{C^TC} &= -\vecop{A^T\tQ + \tQ A + \sum_{k=1}^mN_k^T\tQ N_k} 
-  \left((H\otimes H) T_{(n,n)}(I_{n^2}\otimes P_v)\right)^TQ_v. 
\end{align}
Now, we focus on the transpose of the second part of \eqref{eq:deriveQ_dual1}, that is,
\begin{align*}
&Q_v^T(H\otimes H) T_{(n,n)}\left(I_{n^2}\otimes \vecop{P}\right) \\
&  \qquad = Q_v^T(H\otimes H) T_{(n,n)}\bbm e_1^{n^2} \otimes\vecop{P},\ldots, e_{n^2}^{n^2}\otimes \vecop{P}  \ebm \\
&  \qquad = Q_v^T(H\otimes H) \bbm \vecop{\Psi_1 \otimes P},\ldots, \vecop{\Psi_{n^2}\otimes P}  \ebm  =: \Xi, ~~~~(\text{using~\cref{lemma:change_kron}})
\end{align*}
where $\Psi_i \in \Rnn$ is such that $e_i^{n^2} = \vecop{\Psi_i}$.  Using~\eqref{eq:tracepro} and \Cref{lemma:RelH1H1ABC}, we further analyze the above equation:
\begin{align*}
\Xi   &  = \vecop{\tQ}^T\bbm \vecop{H\left(\Psi_1 \otimes P\right)H^T},\ldots, \vecop{H\left(\Psi_{n^2}\otimes P\right)H^T}  \ebm \\
&  = \vecop{\tQ}^T\bbm \vecop{H\left(P \otimes \Psi_1\right)H^T},\ldots, \vecop{H\left(P\otimes \Psi_{n^2}\right)H^T}  \ebm \\
&  = \Big[ \vecop{\Psi_1}^T\vecop{\cH^{(2)}\left(P \otimes \tQ\right)(\cH^{(2)})^T},\ldots, \\ &
\hspace{6cm} \vecop{\Psi_{n^2}}^T\vecop{\cH^{(2)}\left(P\otimes \tQ\right)(\cH^{(2)})^T}  \Big]\\
&  = \bbm \left(e^{n^2}_1\right)^T\vecop{\cH^{(2)}\left(P \otimes \tQ\right)(\cH^{(2)})^T},\ldots, \left(e^{n^2}_{n^2}\right)^T\vecop{\cH^{(2)}\left(P\otimes \tQ\right)(\cH^{(2)})^T}  \ebm \\
& = \left(\vecop{\cH^{(2)}\left(P \otimes \tQ\right)(\cH^{(2)})^T}\right)^T. 
\end{align*}
Substituting this relation in~\eqref{eq:deriveQ_dual1} yields
\begin{align*}
\vecop{C^TC} &= -\vecop{A^T\tQ + \tQ A + \sum_{k=1}^mN_k^T\tQ N_k + \cH^{(2)}\left(P \otimes \tQ\right)(\cH^{(2)})^T},
\end{align*}
which shows that $\tQ$ solves \eqref{eq:True_Obs} as well. Since it is assumed that Eq.\ \cref{eq:True_Obs} has a unique solution, thus $\tQ = Q$.  Replacing  $\cG^{-T}(C^T\otimes C^T)\cI^T_p$ by $\vecop{Q}$ in~\eqref{eq:H2_P_Q_rel} and using \eqref{eq:tracepro} result in 
\begin{align*}
\trace{CPC^T} &= \cI_m^T(B^T\otimes B^T)\vecop{Q} = \trace{B^TQB}.
\end{align*}
This concludes the proof.
\end{proof}

It can be  seen that if  $H$ is zero,  the expression \eqref{eq:PQ_trace_h2} boils down to the $\cH_2$-norm of bilinear systems, and if $N_k$ are also set to zero then it provides us the $\cH_2$-norm of stable  linear systems as one would expect.
\begin{remark}\label{rescalling_discussions}
In \Cref{lemma:Gramian_PQ}, we have assumed that the solutions of \eqref{eq:True_Cont} and \eqref{eq:True_Obs} exist, and are unique and positive semidefinite. Equivalently, the series appearing in the definition of the $\cH_2$-norm is finite (see \cref{def:h2norm}); hence, the $\cH_2$-norm exits. Naturally, the stability of the matrix $A$ is necessary for the existence of Gramians, and a detailed study of  such solutions of \eqref{eq:True_Cont} and \eqref{eq:True_Obs} has been carried out in \cite{morBTQBgoyal}.  However, as for bilinear systems, these Gramians may not have these desired properties when $\|N_k\|$ and $\|H\|$ are  large. 

Nonetheless, a solution of these problems can be obtained via rescaling of the system as done in the bilinear case~\cite{morcondon2005}. For this, we need to rescale the input variable $u(t)$ as well as the state vector $x(t)$. More precisely, we can replace $x(t)$ and $u(t)$ by $\gamma x(t) =: \tx(t)$ and $\gamma u(t)=: \tu(t)$ in~\eqref{sys:QBsys}. This leads to 
\begin{equation}\label{sys:QBsys_scale}
\begin{aligned}
\gamma\dot \tx(t) &= \gamma A\tx(t) + \gamma^2 H\left(\tx(t)\otimes \tx(t)\right) + \gamma^2 \sum_{k=1}^m N_k\tx(t)\tu_k(t) + \gamma B\tu(t),\\
y(t)& = \gamma C\tx(t), \quad \tx(0) = 0.
\end{aligned}
\end{equation}
For $\gamma \neq 0$, we get a scaled system as follows:
\begin{equation}\label{sys:QBsys_scale1}
\begin{aligned}
\dot \tx(t) &=  A\tx(t) + (\gamma H)\left(\tx(t)\otimes \tx(t)\right) +  \sum_{k=1}^m (\gamma N_k)\tx(t)\tu_k(t) +  B\tu(t),\\
\ty(t)& = C\tx(t), \quad \tx(0) = 0,
\end{aligned}
\end{equation}
where $\ty(t) =  y(t)/\gamma$.  Now, if you compare the systems \eqref{sys:QBsys} and \eqref{sys:QBsys_scale1}, the input/output mapping is the same, and the outputs of the systems  differ by the factor $\gamma$. 
Since interpolation-like model reduction methods aim at capturing the input/output mapping, we can also use  \eqref{sys:QBsys_scale1} for constructing  the subspaces $V$ and $W$. 
\end{remark}

Our primary aim is to determine a reduced-order system which minimizes  the $\cH_2$-norm of the  error system. From the derived $\cH_2$-norm expression for the QB system, it is clear that the true $\cH_2$-norm  has a quite complicated structure as defined in~\eqref{eq:h2norm} and does not lend itself well to 
deriving necessary conditions for optimality. Therefore, to simplify the problem, we focus only on the first three leading terms of the series \eqref{eq:inputoutputmapping}.  The main reason for considering the first three terms is that it is the minimum number of terms containing contributions from all the system matrices $(A,H,N_k,B,C)$; in other words, linear, bilinear and quadratic terms are already contained in these first three kernels.  Our approach is also inspired by \cite{flagg2015multipoint} where a truncated 
$\cH_2$ norm is defined for bilinear systems and used to construct high-fidelity reduced-order models minimizing corresponding error measures. Therefore, based on these three leading kernels,  we define a truncated $\cH_2$-norm for QB systems, denoted  by $\|\Sigma\|_{\cH_2^{(\cT)}}$. Precisely, in terms of kernels, a truncated norm can be defined as follows:
\begin{equation}\label{eq:H2_QB}
 \|\Sigma\|_{\cH^{(\cT)}_2} := \sqrt{\trace{ \sum_{i=1}^3\int_0^\infty\cdots \int_0^\infty \tf_i{(t_1,\ldots,t_i)} \left(\tf_i{(t_1,\ldots,t_i)} \right)^T dt_1\cdots dt_i }},
\end{equation}
where 
 \begin{equation}\label{eq:def_f}
 \tf_i (t_1,\ldots, t_i) = C \tg_i (t_1,\ldots, t_i) ,\qquad i \in \{1,2,3\},
 \end{equation}
 and
 \begin{equation*}
 \begin{aligned}
 \tg_1(t_1)&= e^{At_1}B ,\quad \quad \tg_2(t_1,t_2)= e^{At_2}\bbm N_1,\ldots,N_m\ebm \left(I_m\otimes e^{At_1}B\right),\\
 \tg_3 (t_1,t_2, t_3)&= e^{At_3}H (e^{At_2}B\otimes  e^{At_1}B\big).
 \end{aligned}
 \end{equation*}

Analogous to the $\cH_2$-norm of the QB system, a truncated $\cH_2$-norm of  QB systems can be determined by  truncated controllability  and observability  Gramians associated with the QB system, denoted by  $P_\cT $ and  $Q_\cT $, respectively~\cite{morBTQBgoyal}. These truncated Gramians  are the unique solutions of the  following  Lyapunov equations, provided $A$ is stable:
\begin{subequations}\label{eq:Gram_TQB}
 \begin{align}
  A P_\cT + P_\cT A^T + \sum_{k=1}^m N_kP_lN_k^T + H(P_l\otimes P_l) H^T + BB^T &= 0,\label{eq:CGram}\\
  A^TQ_\cT  + Q_\cT A + \sum_{k=1}^m N_k^TQ_lN_k + \cH^{(2)}(P_l\otimes Q_l) \left(\cH^{(2)}\right)^T + C^TC &= 0\label{eq:OGram},
 \end{align}
\end{subequations}
where $\cH^{(2)}$ is the mode-2 matricization of the QB Hessian, and $P_l$ and $Q_l$ are the unique solutions of  the following Lyapunov equations:
\begin{subequations}\label{eq:Linear_Lyap}
 \begin{align}
  AP_l + P_lA^T +BB^T &= 0, \label{eq:LinearC}\\
  A^TQ_l + Q_lA   + C^TC &= 0.\label{eq:LinearO}
 \end{align}
\end{subequations}
Note that the stability of the matrix  $A$ is a necessary and sufficient condition for the existence of truncated Gramians~\eqref{eq:Gram_TQB}.   In what follows, we show the connection between the truncated $\cH_2$-norm and  the defined truncated Gramians for  QB systems.
\begin{lemma}\label{lemma:H2_norm}
Let $\Sigma$ be the QB system~\eqref{sys:QBsys}  with a stable $A$ matrix. Then, the  truncated $\cH_2$-norm based on the first three terms of the Volterra series is given by
 \begin{equation*}
  \|\Sigma\|_{\cH^{(\cT)}_2} = \sqrt{\trace{CP_\cT C^T}} = \sqrt{\trace{B^TQ_\cT B}},
 \end{equation*}
where $P_\cT $ and $Q_\cT $ are  truncated controllability and observability Gramians of the system, satisfying \eqref{eq:Gram_TQB}.
\end{lemma}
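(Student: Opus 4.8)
The plan is to mirror the proof of \Cref{lemma:Gramian_PQ}, but exploit the fact that the truncated norm involves only three kernels, so that every integral decouples into products of the linear Gramian $P_l$. First I would factor the output matrix out of the definition~\eqref{eq:H2_QB}: since $\tf_i = C\tg_i$, it suffices to show that
\begin{equation*}
\sum_{i=1}^3\int_0^\infty\cdots\int_0^\infty \tg_i(t_1,\ldots,t_i)\,\tg_i^T(t_1,\ldots,t_i)\,dt_1\cdots dt_i = P_\cT .
\end{equation*}
I would then evaluate the three integrals. For $i=1$, $\int_0^\infty e^{At_1}BB^Te^{A^Tt_1}dt_1 = P_l$ solves~\eqref{eq:LinearC}. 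For $i=2$, the mixed-product rule~\eqref{eq:kronprodpro} gives $(I_m\otimes e^{At_1}B)(I_m\otimes B^Te^{A^Tt_1}) = I_m\otimes e^{At_1}BB^Te^{A^Tt_1}$; integrating in $t_1$ yields $I_m\otimes P_l$, and $\bbm N_1,\ldots,N_m\ebm(I_m\otimes P_l)\bbm N_1,\ldots,N_m\ebm^T = \sum_k N_kP_lN_k^T$, so the $t_2$-integral produces $P_2$ solving $AP_2 + P_2A^T + \sum_k N_kP_lN_k^T = 0$. For $i=3$, the same rule gives $(e^{At_2}B\otimes e^{At_1}B)(B^Te^{A^Tt_2}\otimes B^Te^{A^Tt_1}) = e^{At_2}BB^Te^{A^Tt_2}\otimes e^{At_1}BB^Te^{A^Tt_1}$, whose $t_1,t_2$-integral is $P_l\otimes P_l$, leaving $P_3$ solving $AP_3 + P_3A^T + H(P_l\otimes P_l)H^T = 0$. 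Adding these three Lyapunov equations shows that $P_l + P_2 + P_3$ satisfies exactly~\eqref{eq:CGram}; by the assumed uniqueness of the solution this sum equals $P_\cT$, proving $\|\Sigma\|_{\cH^{(\cT)}_2} = \sqrt{\trace{CP_\cT C^T}}$.

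For the dual identity $\trace{CP_\cT C^T} = \trace{B^TQ_\cT B}$, I would lean on the elementary duality fact that if $AX+XA^T+F=0$ and $A^TY+YA+G=0$ with $A$ stable, then $\trace{GX}=\trace{FY}$ (substitute one equation into $\trace{GX}$ and use cyclicity of the trace). Introducing the observability pieces $Q_1=Q_l$ solving~\eqref{eq:LinearO}, $Q_2$ solving $A^TQ_2+Q_2A+\sum_k N_k^TQ_lN_k=0$, and $Q_3$ solving $A^TQ_3+Q_3A+\cH^{(2)}(P_l\otimes Q_l)(\cH^{(2)})^T=0$, one checks by summation that $Q_\cT = Q_1+Q_2+Q_3$. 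I would then match $\trace{CP_\cT C^T}=\sum_i\trace{C^TC\,P_i}$ against $\trace{B^TQ_\cT B}=\sum_i\trace{BB^TQ_i}$ term by term. The linear term is the classical identity $\trace{C^TC\,P_l}=\trace{BB^TQ_l}$. For the bilinear term, applying the duality fact twice, once with $(X,Y)=(P_2,Q_l)$ and once with $(X,Y)=(P_l,Q_2)$, reduces both $\trace{C^TC\,P_2}$ and $\trace{BB^TQ_2}$ to the common scalar $\trace{(\sum_k N_k^TQ_lN_k)P_l}$, using $\trace{N_kP_lN_k^TQ_l}=\trace{N_k^TQ_lN_kP_l}$.

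The crux, and the step I expect to be the main obstacle, is the quadratic term. Applying the duality fact with $(X,Y)=(P_3,Q_l)$ gives $\trace{C^TC\,P_3}=\trace{H(P_l\otimes P_l)H^TQ_l}$, whereas with $(X,Y)=(P_l,Q_3)$ it gives $\trace{BB^TQ_3}=\trace{\cH^{(2)}(P_l\otimes Q_l)(\cH^{(2)})^TP_l}$. These two expressions are not manifestly equal, and bridging them is precisely where the symmetry of the Hessian tensor is essential: recalling $H=\cH^{(1)}$ and rewriting both sides as vectorized inner products through~\eqref{eq:trace1}, the required identity
\begin{equation*}
\trace{\cH^{(1)}(P_l\otimes P_l)(\cH^{(1)})^TQ_l} = \trace{\cH^{(2)}(P_l\otimes Q_l)(\cH^{(2)})^TP_l}
\end{equation*}
is exactly the second relation of \Cref{lemma:RelH1H1ABC} with $\cA=Q_l$ and $\cB=\cC=P_l$. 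This gives $\trace{C^TC\,P_3}=\trace{BB^TQ_3}$; summing the three matched terms then delivers $\trace{CP_\cT C^T}=\trace{B^TQ_\cT B}$ and completes the proof.
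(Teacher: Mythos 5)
Your proof is correct, but it takes a genuinely different route from the paper's. For the first identity, you evaluate the three kernel integrals explicitly—showing that $\int \tg_1\tg_1^T$, $\int\int \tg_2\tg_2^T$, and $\int\int\int \tg_3\tg_3^T$ solve Lyapunov equations with right-hand sides $BB^T$, $\sum_k N_kP_lN_k^T$, and $H(P_l\otimes P_l)H^T$ respectively—and sum them to recover \eqref{eq:CGram}; the paper instead outsources exactly this fact to the reference \cite{morBTQBgoyal}, so your version is more self-contained on this point. For the duality $\trace{CP_\cT C^T}=\trace{B^TQ_\cT B}$, the paper vectorizes the Lyapunov equations, writes $\vecop{P_\cT}$ and $\vecop{Q_\cT}$ explicitly in terms of $\cL^{-1}=-(A\otimes I_n+I_n\otimes A)^{-1}$, and matches the resulting Kronecker expressions; you instead split $P_\cT=P_l+P_2+P_3$ and $Q_\cT=Q_l+Q_2+Q_3$ and apply the elementary two-Lyapunov trace identity ($AX+XA^T+F=0$, $A^TY+YA+G=0$ implies $\trace{GX}=\trace{FY}$) term by term, which is arguably more transparent and avoids the large vectorized formulas. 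Both proofs ultimately pivot on the same crux, and you identify it accurately: the mixed quadratic terms $\trace{\cH^{(1)}(P_l\otimes P_l)(\cH^{(1)})^TQ_l}$ and $\trace{\cH^{(2)}(P_l\otimes Q_l)(\cH^{(2)})^TP_l}$ are equated only through the symmetry of the Hessian tensor—you invoke the second relation of \Cref{lemma:RelH1H1ABC} with $\cA=Q_l$, $\cB=\cC=P_l$, while the paper reaches the same identity through \Cref{lemma:trace_property} inside its vectorized computation (the two are the same content, since \Cref{lemma:RelH1H1ABC} is proved from \Cref{lemma:trace_property} and $\cH^{(2)}=\cH^{(3)}$). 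What the paper's heavier Kronecker formulation buys is continuity with the rest of the machinery: the expressions for $\vecop{P_\cT}$ and $\vecop{Q_\cT}$ it derives are precisely the form reused in the error-functional expansion of \Cref{coro:error} and the optimality-condition derivations, whereas your argument is the cleaner standalone proof of this lemma.
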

\begin{proof}
First, we note that \eqref{eq:Gram_TQB} and \eqref{eq:Linear_Lyap} are standard Lyapunov equations. As $A$ is assumed to be stable, these equations have unique solutions~\cite{bartels1972solution}.   Next, let $\cR_i$ be
 \begin{equation*}
 \cR_i = \int_0^\infty\cdots \int_0^\infty \tf_i(t_1,\ldots,t_i) \left(\tf_i(t_1,\ldots,t_i)\right)^T dt_1\cdots dt_i,
 \end{equation*}
 where  $\tf_i(t_1,\ldots, t_i)$ are defined in~\eqref{eq:def_f}.  Thus, $\|\Sigma\|^2_{\cH^{(\cT)}_2} = \trace{C\left(\sum_{i=1}^3 \cR_i\right) C^T}$. It is shown in~\cite{morBTQBgoyal} that  $\sum_{i=1}^3 \cR_i = P_\cT $ solves  the Lyapunov equation \eqref{eq:CGram}. Hence,
 $$\|\Sigma\|^2_{\cH^{(\cT)}_2} = \trace{CP_\cT C^T}.$$
 Next, we show that $\trace{CP_\cT C^T} = \trace{B^TQ_\cT B}$. For this, we use the trace property~\eqref{eq:trace2} to obtain:
 \begin{align*}
  \trace{CP_\cT C^T} &= \left(\cI_p\right)^T (C\otimes C)\vecop{P_\cT }~\text{and}~
    \trace{B^TQ_\cT B} = \left(\vecop{Q_\cT }\right)^T  (B\otimes B) \cI_m.
 \end{align*}
 Applying $\vecop{\cdot}$ to both sides of~\eqref{eq:Gram_TQB} results in
 \begin{equation*}
 \begin{aligned}
  \vecop{P_\cT } &= \cL^{-1}\left( \left(B\otimes B\right)\cI_m + \sum_{k=1}^m\left(N_k\otimes N_k\right) \cL^{-1}\left(B\otimes B\right)\cI_m  \right. \\
  & \left.\qquad\qquad\qquad+ \vecop{H\left(P_l\otimes P_l\right)H^T } \vphantom{\sum_{k=1}^m} \right), ~\text{and}\\
  \vecop{Q_\cT } &= \cL^{-T}\left( (C\otimes C)^T\cI_p + \sum_{k=1}^m (N_k\otimes N_k)^T \cL^{-T}(C\otimes C)^T\cI_p \right.\\
  &\left.\qquad\qquad\qquad+ \vecop{\cH^{(2)}(P_l\otimes Q_l)\left(\cH^{(2)}\right)^T } \vphantom{\sum_{k=1}^m}\right),
  \end{aligned}
 \end{equation*}
where $\cL = -(A\otimes I_n + I_n\otimes A)$, and $P_l$ and $Q_l$ solve \eqref{eq:Linear_Lyap}. Thus,
\begin{equation}
\begin{aligned}
  \trace{B^TQ_\cT B}   &=  \left( \left(\cI_p\right)^T(C\otimes C) +\left(\cI_p\right)^T(C\otimes C) \cL^{-1}\sum_{k=1}^m(N_k\otimes N_k) \right.\\
  & \left.\qquad+ \left(\vecop{\cH^{(2)}(P_l\otimes Q_l)\left(\cH^{(2)}\right)^T }\right)^T \right) \cL^{-1}(B\otimes B)\cI_m.\label{eq:trace_change}
\end{aligned}
\end{equation}
Since $P_l$ and $Q_l$ are the unique solutions of~\eqref{eq:LinearC} and~\eqref{eq:LinearO}, this gives $\vecop{P_l} = \cL^{-1}(B\otimes B)\cI_m$ and $\vecop{Q_l} = \cL^{-T}(C\otimes C)^T\cI_p$. This implies that
\begin{align*}
&\left(\vecop{\cH^{(2)}(P_l\otimes Q_l)\left(\cH^{(2)}\right)^T }\right)^T  \vecop{P_l} \\
&\qquad\qquad=   \vecop{P_l}^T\vecop{\cH^{(2)}(P_l\otimes Q_l)\left(\cH^{(2)}\right)^T } \\
&\qquad\qquad= \vecop{Q_l}^T\vecop{ H (P_l\otimes P_l)H^T } \hspace{3cm} \left(\text{using \Cref{lemma:trace_property}}\right)\\
&\qquad\qquad=  \left(\cI_p\right)^T (C\otimes C)\cL^{-1}\vecop{ H (P_l\otimes P_l)H ^T }.
\end{align*}
Substituting the above relation in~\eqref{eq:trace_change} yields
\begin{align*}
 \trace{B^TQ_\cT B} &=  \left(\cI_p\right)^T(C\otimes C) \cL^{-1}\Big((B\otimes B)\cI_m + \sum_{k=1}^m(N_k\otimes N_k)\cL^{-1}(B\otimes B)\cI_m\\
  & \qquad+ \vecop{H(P_l\otimes P_l)H^T } \Big)\\
  &=\left(\cI_p\right)^T(C\otimes C)\vecop{P_\cT }=\trace{CP_\cT C^T}.
\end{align*}
This concludes the proof.
\end{proof}

\begin{remark}
One can consider  the first $\cM$ number of terms of the corresponding Volterra series and, based on these $\cM$ kernels, another truncated $\cH_2$-norm can be defined. However, this significantly increases the complexity of the problem. In this paper, we stick to the truncated $\cH_2$-norm for the QB system that depends on the first three terms of the input/output mapping. 	And we intend to construct reduced-order systems~\eqref{sys:QBsysRed} such that this truncated $\cH_2$-norm of the error system is minimized. Another motivation for the derived truncated $\cH_2$-norm for QB systems is that for bilinear systems, the authors in~\cite{flagg2015multipoint}  showed that the $\cH_2$-optimal model reduction based on a truncated $\cH_2$-norm (with only two terms of the Volterra series of a bilinear system) also mimics the accuracy of the true $\cH_2$-optimal approximation very closely.
\end{remark}

\subsection{Optimality conditions based on a truncated \texorpdfstring{\boldmath{$\cH_2$}}{H2}-norm}
We now  derive  necessary conditions for optimal model reduction based on the truncated $\cH_2$-norm of the error system. First, we define the QB error system.  For the full QB model $\Sigma$ in \eqref{sys:QBsys} and the reduced QB model $\hat \Sigma$ in \eqref{sys:QBsysRed}, we  can write the error system as
\begin{equation}\label{eq:ErrorSys}
 \begin{aligned}
  \bbm \dot x(t) \\ \dot \hx(t) \ebm &{=} \underbrace{\bbm A & \mathbf{0} \\ \mathbf{0} & \hA \ebm}_{A^e} \underbrace{\bbm  x(t) \\  \hx(t) \ebm}_{x^e(t)}  {+}\bbm H\left(x(t)\otimes x(t)\right) \\ \hH\left(\hx(t)\otimes \hx(t)\right)\ebm {+} \sum_{k=1}^m\underbrace{\bbm N_k & \mathbf{0} \\ \mathbf{0} & \hN_k \ebm }_{N^e_k} \bbm  x(t) \\  \hx(t) \ebm u_k(t)  {+} \underbrace{\bbm B \\ \hB\ebm}_{B^e} u(t),\\
  y^{e}(t) &=y(t) - \hy(t)=  \underbrace{\bbm C & -\hC \ebm}_{C^e} \bbm x^T(t) & \hx^T(t) \ebm^T,\quad x^e(0)=0.
 \end{aligned}
\end{equation}
It can be seen that the error system~\eqref{eq:ErrorSys} is not in the conventional  QB form due to the absence of the quadratic term  $x^e(t)\otimes x^e(t)$. However, we can rewrite the system~\eqref{eq:ErrorSys} into a regular QB form by using an appropriate Hessian of the error system~\eqref{eq:ErrorSys} as follows:
\begin{equation}\label{eq:ErrorSys1}
\Sigma^e :=\begin{cases}
 \begin{aligned}
  \dot x^{e}(t) &= A^{e} x^{e}(t) +  H^e\left(x^e(t)\otimes x^e(t)\right) +\sum_{k=1}^m N_k^{e}x^{e}(t)u_k(t)  + B^e u(t),\\
  y^e(t) &= C^e x^e(t) ,\quad x^e(0)=0,
 \end{aligned}
\end{cases}
\end{equation}
where $H^e = \bbm H\cF \\ \hH\hat{\cF} \ebm $ with $\cF = \bbm I_n & \mathbf{0} \ebm \otimes \bbm I_n & \mathbf{0} \ebm $ and $\hat{\cF} = \bbm \mathbf{0} & I_r\ebm \otimes \bbm \mathbf{0} & I_r\ebm  $.
Next, we write the truncated $\cH_2$-norm, as defined in~\cref{lemma:H2_norm},  for the error system~\eqref{eq:ErrorSys1}. For the existence of this norm for the system~\eqref{eq:ErrorSys1}, it is necessary to assume  that the matrix $A^e$ is stable, i.e., the matrices $A$ and $\hA$ are stable.  Further, we assume that the matrix $\hA$ is diagonalizable. Then, by performing basic algebraic manipulations and making use of \Cref{lemma:change_kron}, we obtain the  expression for the error functional $\cE$ based on the truncated $\cH_2$-norm of the error system~\eqref{eq:ErrorSys1} as shown next.
\begin{corollary}\label{coro:error}
 Let $\Sigma$  be the original system, having a stable matrix $A$, and  let $\hat\Sigma$  be the reduced-order original system, having a stable and diagonalizable matrix $\hA$. Then,
  \begin{align}
  \cE^2 := \|\Sigma^e\|^2_{\cH^{(\cT)}_2} &= (\cI_p)^T ( C^e \otimes C^e) (-A^e\otimes I_{n+r} - I_{n+r}\otimes A^e  )^{-1}\Big( (B^e\otimes B^e)\cI_m \nonumber \\
  \label{eq:errexp1}
  &\quad + \sum_{k=1}^m (N^e_k\otimes N^e_k) \vecop{P^e_l} + \vecop{H^e(P^e_l\otimes P^e_l)\left(H^e\right)^T } \Big),
 \end{align}
 where $P_l^e$ solves
\begin{equation*}
 A^eP_l^e + P_l^e ( A^e)^T + B^e(B^e)^T = 0.
\end{equation*}
Furthermore, let $\hA = R\Lambda R^{-1}$ be the spectral decomposition of $\hA$, and define $\tB = R^{-1}\hB$, $\tC = \hC R$, $\tN_k = R^{-1}\hN_k R$ and $\tH = R^{-1}\hH (R\otimes R)$. Then, the error  can be rewritten as
\begin{equation}\label{eq:errexp2}
 \begin{aligned}
  \cE^2  &= (\cI_p)^T \left( \tC^e   \otimes \tC^e \right)  \left( -\tA^e \otimes I_{n+r}   - I_{n+r}\otimes \tA^e \right)^{-1}\left( \left(\tB^e \otimes \tB^e \right)\cI_m \right.\\
&\left.  \quad  + \sum_{k=1}^m \left(\tN^e_k \otimes \tN^e_k \right)\cP_l+ \left(\tH^e \otimes \tH^e \right)T_{(n+r,n+r)} ( \cP_l\otimes  \cP_l) \right),
 \end{aligned}
 \end{equation}
 where
  \begin{align}
  \tA^e &= \bbm A & \mathbf{0} \\ \mathbf{0} & \Lambda \ebm , \tN^e_k =  \bbm N_k & \mathbf{0} \\ \mathbf{0} &\tN_k \ebm,\tH^e = \bbm H\cF \\ \tH\hat{\cF} \ebm, \tB^e = \bbm B \\ \tB \ebm,  \tC^e = \bbm C^T \\ -\tC^T \ebm^T,\nonumber\\
  \cP_l &= \bbm \cP_l^{(1)} \\ \cP_l^{(2)} \ebm = \bbm  \left( - A  \otimes I_{n+r}   - I_n\otimes\tA^e \right)^{-1}  \left( B  \otimes \tB^e \right)\cI_m  \\ \left( -\Lambda  \otimes I_{n+r}   - I_{r}\otimes\tA^e \right)^{-1}  \left(\tB  \otimes \tB^e \right)\cI_m    \ebm,  \text{and}\label{eq:p1_exp}\\
 T_{(n+r,n+r)}&= I_{n+r} \otimes \bbm I_{n+r}\otimes e_1^{n+r},\ldots ,I_{n+r}\otimes e_{n+r}^{n+r} \ebm \otimes I_{n+r}.\nonumber
  \end{align}
\end{corollary}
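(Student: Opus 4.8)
The plan is to treat the error system $\Sigma^e$ in \eqref{eq:ErrorSys1} as an ordinary QB system and apply \Cref{lemma:H2_norm} to it directly, then unfold everything with the $\vecop{\cdot}$ operator exactly as in the proof of that lemma. First I would write $\cE^2 = \|\Sigma^e\|^2_{\cH^{(\cT)}_2} = \trace{C^e P_\cT^e (C^e)^T}$, where $P_\cT^e$ is the truncated controllability Gramian of $\Sigma^e$, i.e.\ the unique solution (guaranteed by the assumed stability of $A^e = \bbm A & \0 \\ \0 & \hA\ebm$) of
\begin{equation*}
A^e P_\cT^e + P_\cT^e (A^e)^T + \sum_{k=1}^m N_k^e P_l^e (N_k^e)^T + H^e(P_l^e\otimes P_l^e)(H^e)^T + B^e(B^e)^T = 0,
\end{equation*}
with $P_l^e$ solving the linear Lyapunov equation stated in the corollary. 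Applying $\vecop{\cdot}$ to this equation and using \eqref{eq:trace2} to write $\vecop{B^e(B^e)^T} = (B^e\otimes B^e)\cI_m$ yields a closed form for $\vecop{P_\cT^e}$ by inverting $-A^e\otimes I_{n+r} - I_{n+r}\otimes A^e$. Substituting into $\trace{C^e P_\cT^e (C^e)^T} = (\cI_p)^T(C^e\otimes C^e)\vecop{P_\cT^e}$, again via \eqref{eq:trace2}, produces \eqref{eq:errexp1} verbatim.

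For the second expression \eqref{eq:errexp2}, I would apply the similarity transformation $T = \bbm I_n & \0 \\ \0 & R\ebm$ to the error system, where $\hA = R\Lambda R^{-1}$. A state-space similarity leaves every Volterra kernel, and hence the truncated $\cH_2$-norm, invariant, while it maps the blocks as $A^e\mapsto T^{-1}A^eT = \tA^e$, $B^e\mapsto T^{-1}B^e = \tB^e$, $C^e\mapsto C^eT = \tC^e$, $N_k^e\mapsto T^{-1}N_k^eT = \tN_k^e$, and $H^e\mapsto T^{-1}H^e(T\otimes T) = \tH^e$, producing precisely the tilde quantities of the statement. Thus \eqref{eq:errexp1} turns into \eqref{eq:errexp2} with the untransformed blocks replaced by their tilde counterparts, where $\Lambda$ now occupies the reduced block of $\tA^e$.

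It then remains to identify the two structural objects in \eqref{eq:errexp2}. I would observe that $\cP_l = \vecop{\tilde{P}_l^e}$, where $\tilde{P}_l^e$ is the linear controllability Gramian of the transformed error system, solving $\tA^e\tilde{P}_l^e + \tilde{P}_l^e(\tA^e)^T + \tB^e(\tB^e)^T = 0$. Because $\tA^e = \bbm A & \0 \\ \0 & \Lambda\ebm$ and $\tB^e = \bbm B \\ \tB\ebm$ are block structured, the coefficient matrix $\tA^e\otimes I_{n+r} + I_{n+r}\otimes\tA^e$ is block diagonal, so partitioning $\vecop{\tilde{P}_l^e}$ accordingly splits it into the two pieces $\cP_l^{(1)}$ (governed by $A$, with right-hand side $(B\otimes\tB^e)\cI_m$ and inverse involving $I_n\otimes\tA^e$) and $\cP_l^{(2)}$ (governed by $\Lambda$) of \eqref{eq:p1_exp}. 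For the quadratic summand I would invoke \eqref{eq:trace2} followed by \Cref{lemma:change_kron}, writing $\vecop{\tH^e(\tilde{P}_l^e\otimes\tilde{P}_l^e)(\tH^e)^T} = (\tH^e\otimes\tH^e)\vecop{\tilde{P}_l^e\otimes\tilde{P}_l^e} = (\tH^e\otimes\tH^e)T_{(n+r,n+r)}(\cP_l\otimes\cP_l)$, which is exactly the last term of \eqref{eq:errexp2}.

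The main obstacle is bookkeeping rather than conceptual: one must keep the block partition of $\vecop{\tilde{P}_l^e}$ consistent with the Kronecker ordering so that the two blocks in \eqref{eq:p1_exp} carry the correct factors $(B\otimes\tB^e)$ versus $(\tB\otimes\tB^e)$ and the correct inverses $I_n\otimes\tA^e$ versus $I_r\otimes\tA^e$. A secondary subtlety is justifying that the diagonalizing similarity preserves the \emph{truncated} norm and not merely the full one; this holds because each kernel $\tf_i$ in \eqref{eq:def_f} is similarity-invariant term by term, so truncating the Volterra series before or after the change of coordinates yields the same three kernels.
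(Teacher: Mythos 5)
Your proposal is correct and follows essentially the same route as the paper: the corollary is obtained by applying \Cref{lemma:H2_norm} to the QB error system \eqref{eq:ErrorSys1}, vectorizing the truncated Gramian equations via \eqref{eq:tracepro}, and invoking \Cref{lemma:change_kron} to rewrite $\vecop{\tH^e(\tilde{P}_l^e\otimes\tilde{P}_l^e)(\tH^e)^T}$ as $\left(\tH^e\otimes\tH^e\right)T_{(n+r,n+r)}\left(\cP_l\otimes\cP_l\right)$, with the block partition of $\cP_l$ in \eqref{eq:p1_exp} falling out of the block-diagonal structure of $\tA^e\otimes I_{n+r}+I_{n+r}\otimes\tA^e$. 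Your explicit check that the similarity $T=\bbm I_n & \0 \\ \0 & R\ebm$ maps $H^e$ to $\tH^e$ and leaves each kernel $\tf_i$ invariant term by term is exactly the justification the paper leaves implicit in its remark that diagonalizing $\hA$ is a state-space transformation on the reduced model that does not change the input-output map.
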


The above spectral decomposition for $\hA$ is computationally  useful in simplifying the expressions as we will see later. It reduces the optimization variables by $r(r{-}1)$ since  $\Lambda$ becomes a diagonal matrix without changing the value of the cost function (this is a state-space transformation on the reduced model, which does not change the input-output map). Even though it limits the reduced-order systems to those only having diagonalizable $\hA$, as observed in the linear \cite{morGugAB08} and bilinear cases \cite{morBenB12b,flagg2015multipoint}. It is extremely rare in practice that the optimal $\mathcal{H}_2$ models will have a non-diagonalizable $\hA$; therefore, this diagonalizability assumption does not incur any restriction from a practical perspective.

Our aim is to choose the optimization variables $\Lambda$, $\tB$, $\tC$, $\tN_k$ and $\tH$ such that the $\|\Sigma-\hat\Sigma\|_{\cH^{(\cT)}_2}$, i.e., equivalently the error expression~\eqref{eq:errexp2}, is minimized. Before we proceed further, we introduce a particular  permutation matrix
\begin{equation}\label{eq:per_M}
 M_{pqr} = \bbm I_{p}\otimes \bbm I_q \\ \mathbf{0} \ebm & I_p\otimes \bbm \mathbf{0} \\ I_r \ebm \ebm,
 \end{equation}
 which will prove helpful in simplifying the expressions related to the Kronecker product of block matrices. For example, consider matrices $\cA\in \Rpp$, $\cB \in \Rqq$ and $\cC \in \Rrr$. Then, the following relation holds:
\begin{equation*}
 M_{pqr}^T \begin{pmatrix} \cA\otimes \bbm \cB & \mathbf{0} \\ \mathbf{0} & \cC \ebm \end{pmatrix} M_{pqr} = \bbm  \cA\otimes \cB & \mathbf{0}\\ \mathbf{0} &  \cA\otimes \cC \ebm.
\end{equation*}
Similar block structures can be found in the error expression $\cE$ in \Cref{coro:error}, which can be simplified analogously. Moreover, due to the presence of  many Kronecker products, it would be convenient to  derive  necessary conditions for optimality in the Kronecker formulation itself. Furthermore,  these conditions can  be easily translated into a theoretically equivalent framework of Sylvester equations, which  are more concise, are more easily interpretable, and, more importantly, automatically lead to an effective numerical algorithm for model reduction. To this end, let $V_i\in \Rnr$ and $W_i\in \Rnr $, $i \in \{1,2\}$,  be the solutions of the following standard Sylvester equations:
\begin{subequations}\label{eq:VW_proj}
 \begin{align}
  V_1(-\Lambda) -AV_1 &= B\tB^T,  \label{eq:solveV1} \\
       W_1(-\Lambda) -A^TW_1 &= C^T\tC,  \label{eq:solveW1} \\
   V_2(-\Lambda) -AV_2 &= \sum_{k=1}^mN_kV_1\tN_k^T + H(V_1\otimes V_1)\tH^T , \label{eq:solveV2}\\
   W_2(-\Lambda) -A^TW_2 &= \sum_{k=1}^mN_k^TW_1\tN_k +  2\cdot \cH^{(2)}(V_1\otimes W_1)(\tilde{\cH}^{(2)})^T,\label{eq:solveW2}
 \end{align}
\end{subequations}
where $\Lambda, \tN_k,\tB$ and $\tC$ are as defined in \Cref{coro:error}. Furthermore, we define trial and test bases $V\in \Rnr$ and $W\in \Rnr$ as
\begin{equation}\label{eq:def_VW}
 V = V_1 + V_2 \quad\text{and}\quad W = W_1+W_2.
\end{equation}
We also define $\hV\in \Rrr$ and $\hW\in \Rrr$ (that will appear in optimality conditions as we see later)   as follows:
\begin{equation}\label{eq:def_VWhat}
\hV =\hV_1 + \hV_2 \quad\text{and}\quad \hW = \hW_1+\hW_2,
\end{equation}
where  $\hV_i\in \Rrr$, $\hW_i\in \Rrr$, $i \in \{1,2\}$  are the solutions of the set of equations in \eqref{eq:VW_proj} but with the original system's state-space matrices being replaced with the reduced-order system ones,  for example,  $A$ with $\hA$ and $B$ with $\hB$, etc.  Next, we present first order necessary conditions for optimality, which aim at  minimizing  the error expression~\eqref{eq:errexp2}. The following theorem extends the truncated  $\cH_2$ optimal conditions from the bilinear case to the much more general quadratic-bilinear nonlinearities.
\begin{theorem}\label{thm:optimality_conditions}
 Let $\Sigma$ and $\hat \Sigma$  be the original and reduced-order systems as defined in~\eqref{sys:QBsys} and~\eqref{sys:QBsysRed}, respectively.  Let $\Lambda = R^{-1}\hA R$ be the spectral decomposition of $\hA$, and define  $\tH = R^{-1}\hH(R\otimes R)$, $\tN_k = R^{-1}\hN_kR$, $\tC = \hC R$, $\tB = R^{-1}\hB$.   If $\hat\Sigma$ is a reduced-order system that minimize the truncated $\cH_2$-norm of  the error system~\eqref{eq:ErrorSys1} subject to $\hA$ being diagonalizable, then $\hat\Sigma$ satisfies the following conditions:
 \begin{subequations}\label{eq:optimalityconditions}
\begin{align}
 & \trace{CVe^r_i\left(e^p_j\right)^T } = \trace{\hC \hV e^r_i\left(e^p_j\right)^T }, \qquad\quad\qquad i \in \{1,\ldots,r\},~~ j \in \{1,\ldots,p\},\label{eq:cond_C}\\
  &\trace{B^TWe^r_i\left(e^m_j\right)^T } = \trace{\hB^T\hW e^r_i\left(e^m_j\right)^T },\qquad~~ i \in \{1,\ldots,r\},~~ j \in \{1,\ldots,m\},\label{eq:cond_B}\displaybreak\\
  &(W_1(:,i))^T N_k V_1(:,j)  = (\hW_1(:,i))^T \hN_k \hV_1(:,j),  ~~~~  i,j \in \{1,\ldots,r\},~~ k \in \{1,\ldots,m\}, \label{eq:cond_N}  \\
  &  (W_1(:,i))^T  H (V_1(:,j) \otimes V_1(:,l))  = (\hW_1(:,i))^T  \hH (\hV_1(:,j) \otimes \hV_1(:,l)), \label{eq:cond_H} \\ &\hspace{7.5cm} i,j,l \in \{1,\ldots,r\},\nonumber\\
  &  (W_1(:,i))^T  V(:,i) +  \left(W_2(:,i)\right)^TV_1(:,i)      = (\hW_1(:,i))^T  \hV(:,i) +  \left(\hW_2(:,i)\right)^T \hV_1(:,i),\label{eq:cond_lambda}\\& \hspace{7.5cm} i \in \{1,\dots,r\}. \nonumber
 \end{align}
\end{subequations}
\end{theorem}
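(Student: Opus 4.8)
The plan is to treat $\cE^2$ in the Kronecker form~\eqref{eq:errexp2} as a smooth scalar function of the independent optimization variables $\Lambda$ (constrained to be diagonal), $\tB$, $\tC$, $\tN_k$, and $\tH$, and to impose that every partial gradient vanishes at a minimizer. Each of the five conditions in~\eqref{eq:optimalityconditions} then emerges from one such stationarity requirement: \eqref{eq:cond_C} from $\nabla_{\tC}\cE^2 = \0$, \eqref{eq:cond_B} from $\nabla_{\tB}\cE^2 = \0$, \eqref{eq:cond_N} from $\nabla_{\tN_k}\cE^2 = \0$, \eqref{eq:cond_H} from $\nabla_{\tH}\cE^2 = \0$, and \eqref{eq:cond_lambda} from $\partial\cE^2/\partial\lambda_i = 0$ for each diagonal entry $\lambda_i$. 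Because $\hA = R\Lambda R^{-1}$ is diagonalized, the only coupling among the reduced quantities enters through the Kronecker-sum resolvent $(-\tA^e\otimes I_{n+r} - I_{n+r}\otimes\tA^e)^{-1}$ and through $\cP_l$; this is precisely what makes the diagonalizability assumption so convenient for the differentiation.

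Before differentiating I would first rewrite $\cE^2$ so that its dependence on the reduced model is transparent. Using the block structure of $\tA^e$, $\tB^e$, $\tC^e$, $\tN_k^e$, $\tH^e$ together with the permutation matrix $M_{pqr}$ of~\eqref{eq:per_M}, each Kronecker product splits into a full--full, a reduced--reduced, and two cross blocks, so that $\cE^2$ expands as $\|\Sigma\|^2_{\cH^{(\cT)}_2} - 2\langle\Sigma,\hat\Sigma\rangle + \|\hat\Sigma\|^2_{\cH^{(\cT)}_2}$, in which the first summand is constant. The essential step is to identify these cross blocks with the quantities $V = V_1+V_2$ and $W = W_1+W_2$ of~\eqref{eq:def_VW}: the resolvent applied to the linear source $(B\otimes\tB^e)\cI_m$ encodes, in its cross block, the solution $\cP_l^{(1)}$ carrying $V_1$ of~\eqref{eq:solveV1}, while the bilinear and quadratic sources in the second line of~\eqref{eq:errexp2} reproduce the right-hand sides of~\eqref{eq:solveV2} and, by duality, of~\eqref{eq:solveW1}--\eqref{eq:solveW2}; the reduced--reduced block in turn produces the hatted counterparts $\hV,\hW$ of~\eqref{eq:def_VWhat}. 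Here I would lean on~\Cref{lemma:change_kron} to move vectorizations through the Kronecker products and on~\Cref{lemma:H2_norm} to pass freely between the controllability and observability representations of $\cE^2$.

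With this dictionary in place the gradients are mostly bookkeeping. Differentiating in $\tC$ and $\tB$ touches only the linear kernel and reproduces, after vectorization, the tangential trace conditions~\eqref{eq:cond_C} and~\eqref{eq:cond_B}, which are the QB analogues of the linear $\cH_2$ interpolation conditions. The derivative in $\tN_k$ isolates the bilinear kernel and yields~\eqref{eq:cond_N}. For $\tH$ the Hessian occurs in the self-product $(\tH^e\otimes\tH^e)T_{(n+r,n+r)}(\cP_l\otimes\cP_l)$, so the derivative produces two contributions in which $\tH$ is differentiated in each factor; symmetrizing them through~\eqref{eq:H_symm} and converting mode-$1$ to mode-$2$ matricizations via~\Cref{lemma:RelH1H1ABC} and~\Cref{lemma:trace_property} collapses them to the single condition~\eqref{eq:cond_H} and simultaneously explains the factor $2$ multiplying $\cH^{(2)}(V_1\otimes W_1)(\tilde{\cH}^{(2)})^T$ in~\eqref{eq:solveW2}. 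Finally, differentiating the resolvent and $\cP_l$ with respect to each $\lambda_i$ couples the first- and second-order bases and, after the $M_{pqr}$ block reductions, gives the eigenvalue condition~\eqref{eq:cond_lambda}, in which both $V = V_1+V_2$ and $V_1$ alone appear.

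The hardest parts will be the $\tH$ and $\Lambda$ conditions. For $\tH$, correctly tracking the two symmetric derivative contributions and showing that the tensor-symmetry relations~\eqref{eq:H_symm}--\eqref{eq:H2H3_rel}, combined with~\Cref{lemma:RelH1H1ABC}, merge them into the single symmetric term~\eqref{eq:cond_H} (with its tell-tale factor of $2$) is delicate, since a careless treatment would either double-count or break the required symmetry. For $\Lambda$, the obstacle is that $\lambda_i$ enters $\cE^2$ both through the outer resolvent and implicitly through $\cP_l$, so the gradient is a sum of several Kronecker-structured pieces; reducing all of them with $M_{pqr}$ and recognizing the outcome as~\eqref{eq:cond_lambda} demands the most careful bookkeeping. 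Throughout, the diagonalizability of $\hA$ is what keeps these derivatives tractable, exactly as in the linear~\cite{morGugAB08} and bilinear~\cite{flagg2015multipoint} settings.
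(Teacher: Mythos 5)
Your overall strategy---first-order stationarity of the Kronecker-form error functional \eqref{eq:errexp2} with respect to $\tC$, $\tB$, $\tN_k$, $\tH$, and the diagonal entries of $\Lambda$, followed by block-splitting via $M_{pqr}$ and translation into Sylvester form---is exactly the paper's approach, and your structural description of the $\tN_k$, $\tH$, and $\Lambda$ conditions is essentially right. However, there is a concrete error in your treatment of the $\tC$ and $\tB$ conditions: you claim these derivatives ``touch only the linear kernel.'' This is false, and it is not a cosmetic slip. In \eqref{eq:errexp2} the factor $\tC^e\otimes\tC^e$ multiplies the \emph{entire} source, i.e., $(\tB^e\otimes\tB^e)\cI_m + \sum_k(\tN^e_k\otimes\tN^e_k)\cP_l + (\tH^e\otimes\tH^e)T_{(n+r,n+r)}(\cP_l\otimes\cP_l)$, so $\partial\cE^2/\partial\tC_{ij}$ retains the bilinear and quadratic terms; likewise $\tB$ enters both the linear term and $\cP_l$ in \eqref{eq:p1_exp}, which feeds the bilinear and quadratic terms. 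It is precisely these retained contributions that generate $V_2$ (solving \eqref{eq:solveV2}) and $W_2$ (solving \eqref{eq:solveW2}), so that the theorem's conditions \eqref{eq:cond_C}--\eqref{eq:cond_B} involve $V=V_1+V_2$ and $W=W_1+W_2$. Carried out as you describe, your computation would instead deliver the \emph{linear} IRKA-type conditions $\trace{CV_1e^r_i(e^p_j)^T}=\trace{\hC\hV_1e^r_i(e^p_j)^T}$ and its dual, which are not the statements being proved.

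Two secondary points. First, you misattribute the factor $2$ in \eqref{eq:solveW2}: the $\tH$-stationarity condition \eqref{eq:cond_H} carries no factor $2$ and involves only $W_1$ and $V_1$; the factor $2$ multiplying $\cH^{(2)}(V_1\otimes W_1)(\tilde{\cH}^{(2)})^T$ arises from the chain rule through $\cP_l\otimes\cP_l$ when differentiating with respect to $\tB$ and $\lambda_i$ (the symmetric doubling in \Cref{lemma:appenA1}), since $\cP_l$ depends on those variables but not on $\tH$. Second, your plan glosses over the hardest technical ingredient: reducing the quadratic term $(\tH^e\otimes\tH^e)T_{(n+r,n+r)}(\cP_l\otimes\cP_l)$ under the $M$-block decomposition requires the identities of \Cref{lemma:rel_FPM} (namely $(\hat{\cF}\otimes\cF)T_{(n+r,n+r)}(M\otimes M)(x\otimes y)=T_{(n,r)}(x_3\otimes y_3)$ and its companion), without which the cross and reduced--reduced blocks of the quadratic term cannot be isolated; \Cref{lemma:change_kron} and $M_{pqr}$ alone do not suffice.
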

\begin{proof}
 The proof is given in \Cref{appen:proof}.
\end{proof}

\subsection{Truncated quadratic-bilinear iterative rational Krylov algorithm}
The remaining challenge is now to develop a numerically efficient model reduction algorithm to construct a reduced QB system satisfying  first-order optimality conditions in~\cref{thm:optimality_conditions}. However, as in the linear \cite{morGugAB08} and bilinear~\cite{morBenB12b,flagg2015multipoint} cases,  since the optimality conditions involve the matrices $V,W,\hV,\hW$, which depend on the reduced-order system matrices we are trying to construct, it is not a straightforward task to determine a reduced-order system directly that satisfies all the necessary conditions for optimality, i.e.,~\eqref{eq:cond_C}--\eqref{eq:cond_lambda}.  We propose \Cref{algo:TQB-IRKA}, which  upon convergence  leads to reduced-order systems that \emph{approximately} satisfy  first-order necessary conditions for optimality of \Cref{thm:optimality_conditions}. Throughout the paper, we denote the algorithm by {truncated QB-IRKA}, or {TQB-IRKA}.

\begin{algorithm}[!htb]
 \caption{ TQB-IRKA for quadratic-bilinear systems.}
 \label{algo:TQB-IRKA}
 \begin{algorithmic}[1]
    \Statex {\bf Input:} The system matrices: $ A, H,N_1,\ldots,N_m, B,C$.
    \Statex {\bf Output:} The reduced matrices: $\hA,\hH,\hN_1,\ldots,\hN_m,\hB,\hC$.
    \State Symmetrize the Hessian $H$ and determine its mode-2 matricization $\cH^{(2)}$.
    \State Make an initial (random) guess of the reduced matrices $\hA,\hH,\hN_1,\ldots,\hN_m,\hB,\hC$.
        \While {not converged}
        \State Perform the spectral decomposition of $\hA$ and define:
    \Statex\quad\qquad $\Lambda = R^{-1}\hA R,~\tN_k =R^{-1}\hN_{k}R,~\tH = R^{-1}\hH \left(R\otimes R\right),~\tB = R^{-1}\hB, ~\tC = \hC R. $
    \State Compute mode-2 matricization $\tilde{\cH}^{(2)}$.
        \State Solve for $V_1$ and $V_2$: \label{step:v}
	        \Statex \quad\qquad$ -V_1\Lambda  -  AV_1 = B\tB^T$,
        \Statex \quad\qquad$ -V_2\Lambda  - AV_2 = H(V_1\otimes V_1)\tH^T + \sum\limits_{k=1}^m N_kV_1\tN_k^T $.
        \State  Solve for $W_1$ and $W_2$:\label{step:w}
       \Statex \quad\qquad$ -W_1\Lambda  -  A^TW_1=C^T\tC $,
        \Statex \quad\qquad$ -W_2\Lambda  -  A^TW_2 = 2\cdot \cH^{(2)}(V_1\otimes W_1)(\tilde {\cH}^{(2)})^T + \sum\limits_{k=1}^m N_k^TW_1\tN_k $.
         \State Compute $V$ and $W$:
         \Statex \quad\qquad $V:= V_1 +V_2$,\qquad $W := W_1 + W_2$.
         \State $V = \ortho{V}$, $W = \ortho{W}$.
    \State Determine the reduced matrices:
    \Statex \quad\qquad $\hA = (W^T V)^{-1}W^TAV,\qquad~~\hH =(W^T V)^{-1}W^T H(V\otimes V),$
    \Statex \quad\qquad $\hN_{k} = (W^T V)^{-1}W^TN_kV,\qquad \hB = (W^T V)^{-1}W^TB,\qquad\hC = CV $.
    \EndWhile
\end{algorithmic}
\end{algorithm}

\begin{remark}
	Ideally, the word \emph{upon convergence} means that the reduced-order models quantities $\hA$, $\hH$, $\hN_k$, $\hB$, $\hC$ in  \Cref{algo:TQB-IRKA} stagnate.
In a numerical implementation, one can check the stagnation based on the change of eigenvalues of the reduced matrix $\hA$ and terminate the algorithm once the relative change in the eigenvalues of $\hA$ is less than machine precision. However, in our all numerical experiments, we run TQB-IRKA until the relative change in the eigenvalues of $\hA$ is less than $10^{-5}$. We observe that the quality of reduced-order systems does not change significantly, thereafter; as in the cases of IRKA, B-IRKA, and TB-IRKA.
 
\end{remark}
Our next goal is to show how the reduced-order  system resulting from TQB-IRKA upon convergence relates to  first-order optimality conditions~\eqref{eq:optimalityconditions}. As a first step, we  provide explicit expressions showing how farther away the resulting reduced-order system is from satisfying the optimality conditions. Later, based on these expressions, we discuss how the reduced-order systems, obtained from TQB-IRKA for weakly nonlinear QB systems, satisfy the optimality condition with small perturbations. We also illustrate  using  our numerical examples that the reduced-order system satisfies optimality conditions quite accurately.

\begin{theorem}\label{thm:opt_rom}
	Let $\Sigma$ be a QB system~\eqref{sys:QBsys}, and let $\hat\Sigma$ be the reduced-order QB system~\eqref{sys:QBsysRed}, computed by TQB-IRKA upon convergence. Let $V_{\{1,2\}}, W_{\{1,2\}}$, $V$ and $W$ be the projection matrices that solve~\eqref{eq:VW_proj} using the converged reduced-order system. Similarly, let $\hV_{\{1,2\}}, \hW_{\{1,2\}}$, $\hV$ and $\hW$ that solve \eqref{eq:def_VWhat}.  Also, assume that $\sigma(\hA)\cap \sigma(-\Pi A) = \emptyset$ and $\sigma(\hA)\cap \sigma(-\Pi^T A^T) = \emptyset$,  where $\Pi = V(W^TV)^{-1}W^T$ and $\sigma(\cdot)$ denotes the eigenvalue spectrum of a matrix.
Furthermore, assume $\Pi_v = V_1(W^TV_1)^{-1}W^T$ and $\Pi_w = W_1(V^TW_1)^{-1}V^T$ exist. 	Then, the reduced-order system $\hat\Sigma$ satisfies the following relations:
	\begin{subequations}\label{eq:per_optimality}
		\begin{align}
		& \trace{CVe^r_i\left(e^p_j\right)^T } = \trace{\hC \hV e^r_i\left(e^p_j\right)^T } + \epsilon_C^{(i,j)}, \quad\quad~~~~ i \in \{1,\ldots,r\},~~ j \in \{1,\ldots,p\},\label{eq:cond_C1}  \\
		&\trace{B^TWe^r_i\left(e^m_j\right)^T } = \trace{\hB^T\hW e^r_i\left(e^m_j\right)^T} + \epsilon_B^{(i,j)},\quad i \in \{1,\ldots,r\},~~ j \in \{1,\ldots,m\},\label{eq:cond_B1}  \\
		&(W_1(:,i))^T N_k V_1(:,j)  = (\hW_1(:,i))^T \hN_k \hV_1(:,j)+ \epsilon_{N}^{(i,j,k)},  \label{eq:cond_N1} \\ &\hspace{7.9cm} \hfill i,j \in \{1,\ldots,r\},~~ k \in \{1,\ldots,m\}, \nonumber \\
		&  (W_1(:,i))^T  H (V_1(:,j) \otimes V_1(:,l))  = (\hW_1(:,i))^T  \hH (\hV_1(:,j) \otimes \hV_1(:,l))  + \epsilon_H^{(i,j,l)} , \label{eq:cond_H1} \\ &\hspace{8cm} i,j,l \in \{1,\ldots,r\}, \nonumber\\
		&  (W_1(:,i))^T  V(:,i) +  \left(W_2(:,i)\right)^TV_1(:,i)      = (\hW_1(:,i))^T  \hV(:,i) +  \left(\hW_2(:,i)\right)^T \hV_1(:,i) + \epsilon_\lambda^{(i)}, \label{eq:cond_lambda1}  \\& \hspace{8cm} i \in \{1,\dots,r\}. \nonumber
		\end{align}
	\end{subequations}
where
\begin{align*}
\epsilon_C^{(i,j)} & = -\trace{CV \Gamma_v e^r_i\left(e^p_j\right)^T }, \\
\epsilon_B^{(i,j)} & = - \trace{B^TW(W^TV)^{-T} \Gamma_w e^r_i\left(e^m_j\right)^T }  ,\\
\epsilon_{N}^{(i,j,k)} &=  \left(\epsilon_w(:,i)\right)^TN_k(  V_1(:,j)-\epsilon_v(:,j)) + \left(W_1(:,i)\right)^TN_k(\epsilon_v(:,j))  , \displaybreak\\
\epsilon_{H}^{(i,j,l)} &=  \left(W_1(:,i) - \epsilon_w(:,i)\right)^TH(\epsilon_v(:,j)\otimes (V_1(:,l) - \epsilon_{v}(:,l)) + V_1( :,j)\otimes \epsilon_{v}(:,l)) \\
&\qquad + \left(\epsilon_w(:,i)\right)^TH((V(:,j)- \epsilon_v(:,j))\otimes (V_1(:,l) - \epsilon_{v}(:,l))), ~\mbox{and}\\
\epsilon_\lambda^{(i)} &= -\left( \hW(:,i) \right)^T\Gamma_v(:,i) -  \left(\Gamma_{w}(:.i)\right)^T \left( \hV(:,i)- \Gamma_{v}(:,i)\right) \\
&\qquad  -(W_2(:,i))^T  V_2(:,i)  +  (\hW_2(:,i))^T  \hV_2(:,i),
\end{align*}
in which
 $\epsilon_v $, $	\epsilon_w $, $\Gamma_v$ and $\Gamma_w$, respectively, solve
 \begin{subequations}
\begin{align}
	\epsilon_v \Lambda + \Pi A \epsilon_w &= (\Pi -\Pi_{v})(AV_1 + B\tB^T),\label{eq:epsilonv_thm}\\
	\epsilon_w \Lambda +  (A\Pi)^T \epsilon_w &= (\Pi^T-\Pi_{w})(A^TW_1 + C^T\tC),\label{eq:epsilonw_thm}\\
	\Gamma_v \Lambda + \hA \Gamma_v &= -(W^TV)^{-1}W^T \left( \sum_{k=1}^mN_k\epsilon_v\tN_k^T + H(\epsilon_v\otimes (V_1+\epsilon_v)  + V_1\otimes \epsilon_v)\tH^T \right)\\
	\Gamma_w \Lambda + \hA^T \Gamma_w &= V^T \left(\sum_{k=1}^mN^T_k\epsilon_w\tN_k + \cH^{(2)}(\epsilon_v\otimes (W_1+\epsilon_w) + V_1\otimes \epsilon_w)\left(\cH^{(2)}\right)^T \right).
\end{align}
\end{subequations}
\end{theorem}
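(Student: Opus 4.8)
The plan is to regard the reduced model delivered by \Cref{algo:TQB-IRKA} at convergence as a perturbation of an \emph{idealized} reduced model for which the optimality conditions of \Cref{thm:optimality_conditions} hold \emph{exactly}, and to identify each resulting gap with the claimed $\epsilon$-quantity. The engine of the argument is the fixed-point character of convergence: the converged matrices $\hA,\hH,\hN_k,\hB,\hC$ are precisely those reproduced by the Petrov--Galerkin projection $\Pi = V(W^TV)^{-1}W^T$ assembled from $V = V_1+V_2$ and $W = W_1+W_2$ that solve~\eqref{eq:VW_proj} for this same reduced model. Consequently the projection identities $\hC = CV$, $V\hA = \Pi A V$, $V\hB = \Pi B$ (together with their transpose counterparts on the $W$ side) hold, and these are what force the leading terms of every condition in~\eqref{eq:optimalityconditions} to cancel.

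First I would introduce the first-order mismatch matrices $\epsilon_v$ and $\epsilon_w$, which measure how far the full-order solutions $V_1,W_1$ of~\eqref{eq:solveV1}--\eqref{eq:solveW1} lie from the correspondingly lifted reduced solutions $\hV_1,\hW_1$. Subtracting $V$ times the reduced equation for $\hV_1$ from~\eqref{eq:solveV1}, inserting the projection identities above, and using $\Pi_v V_1 = V_1$ collapses the difference into the single Sylvester equation~\eqref{eq:epsilonv_thm}; the transposed computation with $\Pi_w W_1 = W_1$ produces~\eqref{eq:epsilonw_thm}. The spectral hypotheses $\sigma(\hA)\cap\sigma(-\Pi A)=\emptyset$ and $\sigma(\hA)\cap\sigma(-\Pi^TA^T)=\emptyset$ make these two equations uniquely solvable, and the assumed existence of $\Pi_v,\Pi_w$ is exactly what licenses the manipulations. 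The key qualitative point is that the right-hand sides are driven by $\Pi-\Pi_v$ and $\Pi^T-\Pi_w$, so $\epsilon_v,\epsilon_w$ vanish whenever $\Ima V_1=\Ima V$ and $\Ima W_1=\Ima W$, i.e.\ whenever the second-order corrections $V_2,W_2$ are absent; this is the precise sense in which weakly nonlinear systems nearly satisfy the optimality conditions.

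Next I would propagate these first-order mismatches through the quadratic right-hand sides of the second-order equations~\eqref{eq:solveV2}--\eqref{eq:solveW2}, which defines the reduced correction matrices $\Gamma_v,\Gamma_w$ via the stated reduced Sylvester equations (uniquely solvable since $\sigma(\Lambda)=\sigma(\hA)$ is stable). Together $\epsilon_v,\epsilon_w,\Gamma_v,\Gamma_w$ encode the total deviations $V\hV-V$ and $W\hW-W$ of the primal and dual quantities from their idealized values. With these in hand the conditions split cleanly. Since $\hC=CV$, condition~\eqref{eq:cond_C} becomes the entrywise comparison of $CV$ with $\hC\hV = CV\hV$, whose defect is $-CV\Gamma_v$, giving $\epsilon_C^{(i,j)}$; the dual scaling $\hB^T=B^TW(W^TV)^{-T}$ turns condition~\eqref{eq:cond_B} into the analogous comparison with defect $-B^TW(W^TV)^{-T}\Gamma_w$, giving $\epsilon_B^{(i,j)}$. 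For the bilinear and quadratic conditions~\eqref{eq:cond_N}--\eqref{eq:cond_H} I would substitute $V\hV_1=V_1-\epsilon_v$ and the lifted $W(W^TV)^{-T}\hW_1=W_1-\epsilon_w$ into the reduced forms $\hW_1^T\hN_k\hV_1$ and $\hW_1^T\hH(\hV_1\otimes\hV_1)$, invoking \Cref{lemma:RelH1H1ABC} for the symmetric-Hessian bookkeeping; upon expanding, the full-order forms reappear as the leading terms while the remaining bilinear cross terms in $\epsilon_v,\epsilon_w$ are exactly $\epsilon_N^{(i,j,k)}$ and $\epsilon_H^{(i,j,l)}$.

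The main obstacle is condition~\eqref{eq:cond_lambda}, which couples the first-order quantity $W_1$ with the full $V$ and the second-order quantity $W_2$ with $V_1$ (and their hatted analogues), so its residual cannot be read off from a single substitution. Here one must simultaneously expand $(\hW_1)^T\hV$ and $(\hW_2)^T\hV_1$ through the lifts, track the genuinely second-order products $W_2^TV_2$ versus $\hW_2^T\hV_2$, and recognize which mixed terms cancel by the Petrov--Galerkin biorthogonality built into $\Pi$ and which survive to form $\epsilon_\lambda^{(i)}$. The delicacy is purely organizational: a single misplaced projector or transpose contaminates all subsequent cross terms, so I would carry out the expansion columnwise in $i$, rewriting the hatted primal and dual quantities as their idealized values plus the $\Gamma_v$- and $\Gamma_w$-corrections, and collect to recover the stated four-term expression for $\epsilon_\lambda^{(i)}$. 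Assembling the five defects then completes the identification of~\eqref{eq:per_optimality}.
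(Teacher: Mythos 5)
Your proposal is correct and follows essentially the same route as the paper's own proof: you obtain the lifting relations $V_1 = V\hV_1 + \epsilon_v$ and $W_1 = W(W^TV)^{-T}\hW_1 + \epsilon_w$ by subtracting the projected full-order Sylvester equations from $V$ (resp.\ $W(W^TV)^{-T}$) times the reduced ones, with right-hand sides driven by $\Pi-\Pi_v$ and $\Pi^T-\Pi_w$, then define $\Gamma_v = \hV - I_r$ and $\Gamma_w = \hW - (W^TV)^T$ through the second-order equations and expand each optimality condition columnwise exactly as the paper does, including the key reduction $W_1^TV + W_2^TV_1 = W^TV - W_2^TV_2$ and its hatted analogue for \eqref{eq:cond_lambda1}. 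The only cosmetic deviations are your ``idealized reduced model'' framing and the appeal to \Cref{lemma:RelH1H1ABC} in the $N$- and $H$-conditions (the paper needs only direct substitution and biorthogonality there), neither of which alters the argument.
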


\begin{proof}
	The proof is given in \Cref{appen:proof_opt}.
	\end{proof}
\begin{remark}  \label{rem:epsilon}
In \Cref{thm:opt_rom}, we have presented measures,  e.g., the distance  between $\trace{CVe^r_i(e^p_j)^T}$ and $\trace{\hC\hV e^r_i(e^p_j)^T}$, denoted $\epsilon_C^{(i,j)}$ with which the reduced-order system, via TQB-IRKA, satisfies the optimality conditions \eqref{eq:optimalityconditions}. Even though \Cref{thm:opt_rom} does not provide a guarantee for smallness of these distances, we indeed observe in all the numerical examples in \Cref{sec:numerics} that these deviations from the exact optimality conditions are rather small. Here, we provide an intuition for these numerical results.  Recall that  $V_1$ and $V_2$ solve the Sylvester equations \eqref{eq:solveV1} and \eqref{eq:solveV2}, respectively, and the right-hand side for $V_2$ is quadratic in $H$ and $N_k$. So, for a weakly nonlinear QB system, meaning  $\|H\|$ and $\|N_k\|$ are small w.r.t.\ $\|B\|$, the matrix $V_2$ will be relatively small  compared to the matrix $V_1$. Hence, the  subspace  $V $ is expected to be close to $V_1$. Thus, one could anticipate that the projectors $\Pi = V(W^TV)W^T$ and $\Pi_{v} = V_1(W^TV_1)W^T$ will be close to each other.  As a result, the right-hand side of the Sylvester equation~\eqref{eq:epsilonv_thm} will be small, and hence thus so is $\epsilon_v$. In a similar way, one can argue that $\epsilon_{w}$ in \eqref{eq:epsilonw_thm} will be small. Therefore, it can be shown that in the case of weakly nonlinear QB systems~\eqref{sys:QBsys},  all $\epsilon$'s in \eqref{eq:per_optimality} such as $\epsilon_C^{(i,j)}$ will be very small.

Indeed, the situation in practice proves much better. 
We observe in our numerical results (see~\Cref{sec:numerics}) that even for strongly nonlinear QB systems, i.e., $\|H\|$ and $\|N_k\|$ are  comparable or even much larger than $\|B\|$, \Cref{algo:TQB-IRKA} yields  reduced-order systems which satisfy the optimality conditions~\eqref{eq:optimalityconditions} almost exactly with negligible
perturbations.
\end{remark}

		\begin{remark}  \label{rem:truncindex}
 \Cref{algo:TQB-IRKA} can be seen as an extension of the \emph{truncated} B-IRKA with the truncation index $2$~\cite[Algo. 2]{flagg2015multipoint} from bilinear systems to QB systems.
	In~\cite{flagg2015multipoint}, the truncation index $\cN$, which denotes the number of terms in the underlying Volterra series for bilinear systems, is free, and  as $\cN \rightarrow \infty$, all the perturbations go to zero. However, it is shown in~\cite{flagg2015multipoint}  that in most cases, a small $\cN$, for example, $2$ or $3$ is enough to satisfy all optimality conditions very closely. In our case, a similar convergence will occur if we let the number of terms in the underlying Volterra series of the QB system grow; however, this is not numerically feasible since the subsystems in the QB case become rather complicated after the first three terms. Indeed, because of this, \cite{morGu09} and \cite{morBenB15} have considered the interpolation of multivariate transfer functions corresponding to only the  first two subsystems. Moreover, even in the case of balanced truncation for QB systems~\cite{morBTQBgoyal}, it is  shown by means of numerical examples that
the truncated Gramians for QB systems based on the first three terms of the underlying Volterra series produce quantitatively very accurate reduced-order systems.  Our numerical examples show that this is the case here as well.
\end{remark}

\begin{remark}
So far in all of our discussions, we have assumed that the reduced matrix $\hA$ is diagonalizable. This is a reasonable assumption since non-diagonalizable	matrices lie in a set of the Lebesgue measure zero. The probability of entering this set by any numerical algorithm including TQB-IRKA is zero with respect to the Lebesgue measure.  Thus, TQB-IRKA can be considered safe in this regard. 

Furthermore, throughout the analysis, it has been assumed that the reduced matrix $\hA$ is Hurwitz. However, in case $\hA$ is not Hurwitz, then the truncated $\cH_2$-norm of the error system will be unbounded; thus the reduced-order systems indeed cannot be (locally) optimal. Nonetheless, a mathematical study to ensure the stability from $\cH_2$ iterative schemes are still under investigation even for linear systems. However, a simple fix to this problem is to reflect the unstable eigenvalues of $\hA$ in every step back to the left-half plane. See also \cite{kohler2014closest}
for a more involved approach to stabilize a reduced order system.
\end{remark}

\begin{remark}
	Thus far, we have used $E = I$ for QB systems; however, in the case of $E \neq I$, but  nonetheless a nonsingular matrix, we can still employ \Cref{algo:TQB-IRKA}. One obvious way is to invert $E$, but this is inadmissible in the large-scale setting. Moreover, the resulting matrices may be dense, making the algorithm computationally expensive.  Nevertheless, \Cref{algo:TQB-IRKA} can be employed without inverting $E$. For this, we need to modify  steps 6 and 7 in \Cref{algo:TQB-IRKA} as follows:
	\begin{align*}
	-EV_1\Lambda  -  AV_1 &= B\tB^T,  \\
	-EV_2\Lambda  - AV_2 &= H(V_1\otimes V_1)\tH^T + \sum\nolimits_{k=1}^m N_kV_1\tN_k^T ,\\
	-E^TW_1\Lambda  -  A^TW_1&=C^T\tC ,  \\
	-E^TW_2\Lambda  -  A^TW_2 &= 2\cdot\cH^{(2)}(V_1\otimes W_1)(\tilde {\cH}^{(2)})^T + \sum\nolimits_{k=1}^m N_k^TW_1\tN_k,
	\end{align*}
	and replace $(W^TV)^{-1}$ with $(W^TEV)^{-1}$, assuming $W^TEV$ is invertible while determining the reduced system matrices in step 9 of \Cref{algo:TQB-IRKA}. Then, the modified iterative algorithm with the matrix $E$ also provides a reduced-order system, \emph{approximately} satisfying optimality conditions.  We skip the rigorous proof for the $E\neq I$  case, but it can be proven along the lines of $E = I$. Indeed, one does not even need to invert $W^T E V$ by letting the reduced QB have a reduced E term as $W^TEV$, and the spectral decomposition of $\hA$ is replaced by a generalized eigenvalue decomposition of $\hE$ and $\hA$. However, to keep the notation of \Cref{algo:TQB-IRKA} simple, we omit these details.
\end{remark}

\subsection{Computational issues}
The main bottleneck in applying {TQB-IRKA} is the computations of the reduced matrices, especially the computational cost related to $\hH := W^TH(V\otimes V)$ that needs to be evaluated at each iteration. Regarding this, there is an efficient method, proposed in~\cite{morBenB15}, utilizing the properties of tensor matricizations, which we summarize in  \Cref{algo:Hessiancomputation}.
\begin{algorithm}[!tb]
 \caption{Computation of the Hessian of the reduced QB system~\cite{morBenB15}.}
 \begin{algorithmic}[1]
    \State Determine $\cY\in\R^{r\times n\times n}$, such that $\cY^{(1)} = W^TH$.
 \State Determine $\cZ \in \R^{r\times r\times n}$, such that $\cZ^{(2)} = V^T\cY^{(2)}$.
 \State Determine $\cX \in \R^{r\times r\times r}$, such that $\cX^{(3)} = V^T \cZ^{(3)}.$
 \State Then, the reduced Hessian is $ \hH = \cX^{(1)}.$
\end{algorithmic}\label{algo:Hessiancomputation}
\end{algorithm}

 \Cref{algo:Hessiancomputation}  avoids the highly undesirable explicit formulation of $V\otimes V$ for large-scale systems to compute the reduced Hessian,  and the algorithm does not rely on  any particular structure of the Hessian. However, a QB system obtained using semi-discretization of a PDEs usually leads to a Hessian which has a particular structure related to that particular PDE and the choice of the discretization method. 

 Therefore, we propose another efficient way to compute $\hH$ that utilizes a particular structure of the Hessian, arising from the governing PDEs or ODEs. Generally, the term  $H(x\otimes x)$ in the QB system~\eqref{sys:QBsys} can be written as  $H(x\otimes x) = \sum_{j =1}^p (\cA^{(j)}x)\circ (\cB^{(j)}x)$, where $\circ$ denotes the Hadamard product, and $\cA^{(j)}$ and $\cB^{(j)}$ are  sparse matrices, depending on the nonlinear operators in PDEs and discretization scheme, and $p$ is generally a very small integer; for instance, it is equal to $1$ in case of Burgers' equations. Furthermore, using the $i$th rows of $\cA^{(j)}$ and $\cB^{(j)
}$, we can construct the $i$th row of the Hessian:
\begin{equation*}
 \begin{aligned}
  H{(i,:)} = \sum\nolimits_{j =1}^p \cA^{(j)}(i,:)\otimes\cB^{(j)}(i,:),
 \end{aligned}
\end{equation*}
where $H{(i,:)}$, $\cA^{(j)}(i,:)$ and $\cB^{(j)}(i,:)$ represent $i$th row of the matrix $H$, $\cA^{(j)}$ and $\cB^{(j)}$, respectively.  This clearly shows that there is  a particular Kronecker structure of the Hessian $H$, which can be used in order to determine $\hH$. Using the Chafee-Infante equation as an example, we illustrate how the structure of the Hessian (Kronecker structure) can be exploited to determine $\hH$ efficiently.
\begin{example}
 Here, we consider the Chafee-Infante equation, which is discretiz- ed over the spatial domain via a finite difference scheme. The MOR problem for this example is considered in~\Cref{exp:chafee}, where one can also find the governing equations and boundary conditions.  For this particular example, the Hessian (after having rewritten the system into the QB form) is given by
 \begin{equation*}
  \begin{aligned}
   H{(i,:)} &= - \dfrac{1}{2}e^n_{i}\otimes e^n_{k+i} - \dfrac{1}{2}e^n_{k+i}\otimes e^n_{i}, ~~i \in \{1,\ldots,k\},\\
     H{(i,:)} &= -2 (e^n_{i}\otimes e^n_{i}) +  e^n_{i{-}k}\otimes \bbm X{(i{-}k,:)} & \0 \ebm  +  \bbm X{(i{-}k,:)} & \0 \ebm \otimes e^n_{i{-}k} ,\\
     &\hspace{8cm} i\in \{k{+}1,\ldots,n\},
  \end{aligned}
 \end{equation*}
where $k$ is the number of grid points, $n = 2 k$, and  $H{(i,:)}$  is the $i$th row vector of the matrix $H$.  $X{(i,:)}$ also denotes the $i$th row vector of the matrix $X \in \R^{k\times k}$
\begin{equation}\label{eq:X_expression}
X = \bbm 0 & 1 &  &  \\ 1 &0 & \ddots&  \\  & \ddots &\ddots & 1 \\ & & 1 &0  \ebm.
\end{equation}
The Kronecker representation of each row of the matrix $H$ allows us to compute each row of  $H_v : = H(V\otimes V)$ by selecting only the required rows of $V$. This way, we can determine $H_v$ efficiently for  large-scale settings, and then multiply with $W^T$ to obtain the desired reduced Hessian. We note down the step in \Cref{algo:chefee_hessian}  that shows how one can determine the reduced Hessian for the Chafee-Infante example.

\begin{algorithm}[!tb]
	\caption{Computation of the reduced Hessian for Chafee-Infante example.}
	\begin{algorithmic}[1]
		\State \textbf{Input:} $V,W\in \R^{2k\times r},X \in \R^{k\times k} (\text{as defined in}~ \eqref{eq:X_expression})$
		\State Compute $V_x := XV(1{:}k,:)$, where $V(1{:}k,:)$ denotes the first $k$ row vectors of $V$.
		\For{$i = 1:k$}
		\State $H_v{(i,:)} =  -\dfrac{1}{2}V{(i,:)}\otimes V{(k+i,:)} -\dfrac{1}{2}V{(i,:)}\otimes V{(k+i,:)}$,
		\Statex ~$H_v{(k+i,:)} =  -2\left(V{(i,:)}\otimes V{(i,:)}\right) + V{(i,:)}\otimes V_x{(i,:)} + V_x{(i,:)}\otimes V{(i,:)} $,
		\Statex ~~~where $H_v{(q,:)}$ is the $q$th row vector of $H_v$ and the same holds for other matrices.
		\EndFor
		\State Then, the reduced Hessian is $ \hH = W^TH_v.$
	\end{algorithmic}
	\label{algo:chefee_hessian}
\end{algorithm}

In order to show the effectiveness of the proposed methodology that uses the special Kronecker structure of the Hessian $H$, we compute $\hH = W^TH(V\otimes V)$ for different orders of original and reduced-order systems and show the required CPU-time to compute it in \Cref{fig:kronprod_CPU}.

\begin{figure}[!tb]
  \centering
  \begin{tikzpicture}
    \begin{customlegend}[legend columns=-1, legend style={/tikz/every even column/.append style={column sep=0.5cm}} , legend entries={ Utilizing Kronecker structure , Using Algorithm~3.2 }, ]
          \addlegendimage{blue,mark = diamond,line width = 1.0pt}            \addlegendimage{black!50!green,mark = o,line width = 1.0pt}
    \end{customlegend}
  \end{tikzpicture}
  \centering
  \setlength\fheight{3cm}  \setlength\fwidth{5.25cm}
	\tikzsetnextfilename{Figures/new_kron_scheme_fullorrder}%
%
%
\begin{tikzpicture}

\begin{axis}[%
width=\fwidth,
height=\fheight,
scale only axis,
xmin=200,
xmax=2200,
ymin=0,
ymax=12,
xlabel = {Number of grid points, $k$},
 ylabel style={yshift=-.5cm},
 ylabel = {\vspace{1cm}CPU time [s]},
legend style={draw=black,fill=white,legend cell align=left},
ymode=log
]
\addplot [color=blue, line width = 1.0pt,mark = diamond]
  table[row sep=crcr]{200	0.099687\\
300	0.130477\\
400	0.1672\\
500	0.210056\\
600	0.253471\\
700	0.296905\\
800	0.336403\\
900	0.378427\\
1000	0.421771\\
1100	0.4626\\
1200	0.505856\\
1300	0.5535\\
1400	0.591409\\
1500	0.633846\\
1600	0.671847\\
1700	0.721659\\
1800	0.763516\\
1900	0.802962\\
2000	0.842557\\
2100	0.889023\\
};

\addplot [color=black!50!green,line width = 1.0pt,mark = o]
  table[row sep=crcr]{200	0.238861\\
300	0.293549\\
400	0.497508\\
500	0.793381\\
600	1.069775\\
700	1.353059\\
800	1.761808\\
900	2.176857\\
1000	2.677935\\
1100	3.233119\\
1200	3.773846\\
1300	4.448733\\
1400	5.078362\\
1500	5.864959\\
1600	7.03226\\
1700	7.30766\\
1800	8.201882\\
1900	9.444288\\
2000	9.997108\\
2100	11.079719\\
};

\end{axis}
\end{tikzpicture}
	\tikzsetnextfilename{Figures/new_kron_scheme_reducedorder}%
%
%
\begin{tikzpicture}

\begin{axis}[%
width=\fwidth,
height=\fheight,
scale only axis,
xmin=6,
xmax=45,
ymin=0,
ymax=10,
xlabel = {Order of reduced system, $r$},
 ylabel style={yshift=-.5cm},
ylabel = {CPU time [s]},
legend style={draw=black,fill=white,legend cell align=left},
ymode=log
]
\addplot [color=blue,line width = 1.0pt,mark = diamond]
  table[row sep=crcr]{4	0.350946\\
6	0.338542\\
8	0.342402\\
10	0.348468\\
12	0.37658\\
14	0.378822\\
16	0.399418\\
18	0.411322\\
20	0.429176\\
22	0.442288\\
24	0.455557\\
26	0.487464\\
28	0.495717\\
30	0.504121\\
32	0.530412\\
34	0.543511\\
36	0.570042\\
38	0.591826\\
40	0.621932\\
42	0.660586\\
};

\addplot [color=black!50!green,line width = 1.0pt,mark = o]
  table[row sep=crcr]{4	0.317567\\
6	0.433519\\
8	0.577096\\
10	0.870143\\
12	1.415959\\
14	1.863651\\
16	2.015079\\
18	2.933387\\
20	2.705114\\
22	3.787307\\
24	4.535752\\
26	4.758249\\
28	5.527958\\
30	5.691176\\
32	6.564368\\
34	7.360449\\
36	7.678947\\
38	8.807282\\
40	8.98901\\
42	9.811036\\
};

\end{axis}
\end{tikzpicture}

  \caption{The left figure shows the computational time for $\hH:= W^TH(V\otimes V)$ by varying the number of grid points in the spatial domain by fixing the order of the reduced system to $r = 20$. In the right figure, we show the computational time for different orders of the reduced system using a fix number of grid points, $k = 1000$.}
  \label{fig:kronprod_CPU}
 \end{figure}
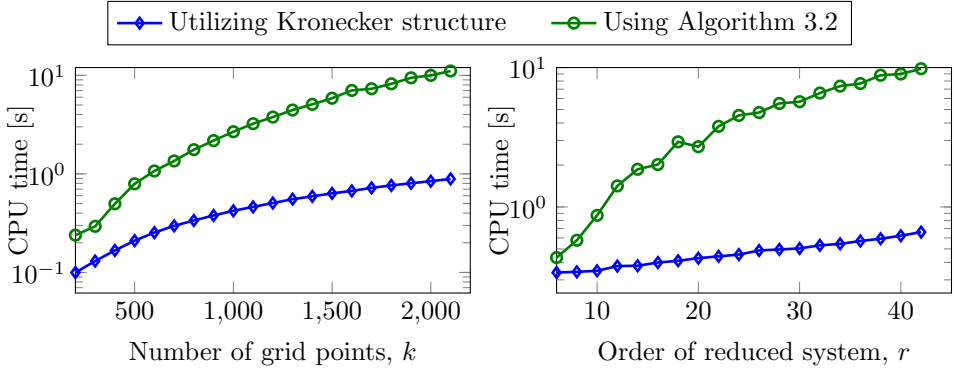

 These figures show that if one utilizes the  present Kronecker structure of the Hessian $H$, then the computational cost does not increase as rapidly as one would use~\Cref{algo:Hessiancomputation} to compute the reduced Hessian. So, our conclusion here is that it is worth to exploit the Kronecker structure  of the Hessian of the system for an efficient computation of $\hH$ in the large-scale settings.
\end{example}

\definecolor{mycolor1}{rgb}{1.00000,0.00000,1.00000}%
\definecolor{mycolor2}{rgb}{0.00000,1.00000,1.00000}%


\section{Numerical Results}\label{sec:numerics}
In this section, we illustrate the efficiency of the proposed model reduction method  TQB-IRKA for QB systems by means of several semi-discretized nonlinear PDEs, and compare it with the existing MOR techniques, such as one-sided and two-sided subsystem-based interpolatory projection methods~\cite{morBenB15,morGu09,morPhi03},  balanced truncation (BT) for QB systems~\cite{morBTQBgoyal} and POD, e.g., see~\cite{morHinV05,morKunV08}.  We iterate \Cref{algo:TQB-IRKA} until the relative change in the eigenvalues of $\hA$ becomes smaller than a tolerance level. We set this tolerance to $1e{-}5$. Moreover, we determine interpolation points for the one-sided and two-sided interpolatory projection methods using {IRKA}~\cite{morGugAB08} on the corresponding linear part, which appear to be a good set of interpolation points as shown in~\cite{morBenB15}. All the simulations were done on 
 a board with 4 \intel ~\xeon~E7-8837 CPUs with a 2.67-GHz clock speed using \matlab ~8.0.0.783 (R2012b). Some more details related to the numerical examples are as follows:

\begin{enumerate}
 \item All original and reduced-order systems are integrated by the routine {\tt ode15s} in \matlab~with relative error tolerance of $10^{-8}$  and absolute error tolerance of $10^{-10}$. 
 \item We measure the output at 500 equidistant points within the time interval $[0,T]$, where $T$ is defined in each numerical example.
\item In order to employ BT, we need to solve four standard Lyapunov equations. 
For this, we use {\tt mess\_lyap.m} from \mess-1.0.1 which is based on one of  the latest ADI methods proposed in~\cite{BenKS14b}.

\item We initialize TQB-IRKA (\Cref{algo:TQB-IRKA}) by choosing an arbitrary reduced system while ensuring $\hA$ is Hurwitz and diagonalizable. One can also first compute the projection matrices $V$ and $W$ by employing IRKA on the corresponding  linear system and, using it, we can compute the initial reduced-order system.  But we observe that random reduced systems  give almost the same convergence rate or even better sometimes. Therefore, we stick to a random initial guess selection.
\item Since POD can be applied to a general nonlinear system, we apply POD to the original nonlinear system, without transforming it into a QB system as we observe that this way, POD yields better reduced systems.


\item One of the aims of the numerical examples is to determine the residuals in \Cref{thm:opt_rom}. For this, we first define $\Phi^e_C\in \R^{r\times p}$,  $\Phi^e_B\in \R^{r\times m}$, $\Phi^e_N \in \R^{r\times  r\times m}$,  $\Phi^e_H\in \R^{r\times r\times r}$ and  $\Phi^e_\Lambda\in \R^{r}$  such  that $\epsilon_C^{(i,j)}$ is  the $(i,j)$th entry of $\Phi^e_C$, $\epsilon_B^{(i,j)}$ is the $(i,j)$th entry of $\Phi^e_B$, $\epsilon_N^{(i,j,k)}$ is the $(i,j,k)$th entry of $\Phi^e_N$, $\epsilon_H^{(i,j,k)}$ is the $(i,j,k)$th entry of $\Phi^e_H$ and $\epsilon_\Lambda^{(i)}$is $i$th entry of $\Phi^e_\Lambda$. 

Furthermore, we define $\Phi_C$, $\Phi_B$, $\Phi_N$, $\Phi_H$ and  $\Phi_\Lambda$ to be the terms on the left hand side  of equations \eqref{eq:cond_C1} -- \eqref{eq:cond_lambda1}  in \Cref{thm:opt_rom}, e.g., the $(i,j)$th entry of $\Phi_C$ is $\trace{CVe_i^r\left(e_j^p\right)^T}$. As a result, we define a  relative perturbation measures as follows:
\begin{equation}\label{eq:perturb_def}
\cE_C = \dfrac{\|{\Phi^e_C}\|_2}{\|{\Phi_C}\|_2},~\cE_B = \dfrac{\|{\Phi^e_B}\|_2}{\|{\Phi_B}\|_2},~
\cE_N = \dfrac{\|{\Phi^{e1}_N}\|_2}{\|{\Phi^{(1)}_N}\|_2}, ~ \cE_H = \dfrac{\|{\Phi^{e1}_H}\|_2}{\|{\Phi^{(1)}_H}\|_2},  ~\cE_\Lambda = \dfrac{\|{\Phi^e_\Lambda}\|_2}{\|{\Phi_\Lambda}\|_2},
\end{equation}
where $\Phi^{(1)}_{\{N,H\}}$ and $\Phi^{e1}_{\{N,H\}}$, respectively are mode-1 matricizations of  the tensors $\Phi_{\{N,H\}}$ and $\Phi^{e1}_{\{N,H\}}$.
\item We also address a numerical issue which one might face while employing \Cref{algo:TQB-IRKA}. In step 8 of \Cref{algo:TQB-IRKA}, we need to take a sum  of two matrices $V_1$ and $V_2$, but if $H$ and $N_k$ are too large, then the norm of $V_2$ can be much larger than that of $V_1$. Thus, a  direct sum might reduce the effect of $V_1$. As a remedy we propose to use a scaling factor $\gamma$ for $H$ and $N_k$, thus resulting in the matrices $V_1$ and $V_2$ which have almost the same order of norm. We have already noted in \Cref{rescalling_discussions} that this scaling does not change the input-output behavior.
\end{enumerate}

\subsection{One dimensional Chafee-Infante equation}\label{exp:chafee}
Here, we consider the one-dimensional Chafee-Infante (Allen-Cahn) equation whose governing equation, initial condition and boundary controls are given by
\begin{equation}\label{sys:Chafee_infante}
 \begin{aligned}
  \dot{v} +  v^3 &= v_{xx} + v, & ~~ &(0,L)\times (0,T),&\qquad
  v(0,\cdot)  &= u(t), & ~~ & (0,T),\\
  v_x(L,\cdot) &= 0, & ~~ & (0,T),&
  v(x,0) &= 0, & ~~ & (0,L).
 \end{aligned}
\end{equation}
MOR for this system has been considered in various articles; see, e.g.,~\cite{morBenB15,morBTQBgoyal}. The governing equation \eqref{sys:Chafee_infante} contains a cubic nonlinearity, which  can then be rewritten into  QB form as shown in~\cite{morBenB15}. For more details on the system, we refer to~\cite{chafee1974bifurcation,hansen2012second}.  Next, we  utilize a finite difference scheme by using $k$ equidistant points over the length, resulting in a semi-discretized QB system of order $2 k$. The output of our interest  is the response at the right boundary, i.e., $v(L,t)$, and we  set the number of grid points to $k = 500$, leading to an order   $n = 1000$  QB system.

We construct reduced-order systems of order $r = 10$ using TQB-IRKA, BT, one-sided and two-sided interpolatory projection methods, and POD.  Having initialized TQB-IRKA randomly, it takes $9$ iterations to converge, and for this example, we choose the scaling factor $\gamma = 0.01$. We compute the reduced Hessian as shown in \Cref{algo:chefee_hessian}.   For the POD based approximation, we collect $500$ snapshots of the true solution for the training input $u_1(t) =\left(1+\sin(\pi t)\right)\exp(-t/5) $ and compute the projection by taking the  $10$ dominant basis vectors.

In order to compare the quality of these reduced-order systems with respect to the original system, we first simulate them using the same training input used to construct the POD basis, i.e., $u_1(t) = \left(1+\sin(\pi t )\right)\exp(-t/5) $. We plot the transient responses and relative output errors  for the input in \Cref{fig:chafee_input1}. As expected,
since we are comparing the reduced models for the same forcing term used for POD,
\Cref{fig:chafee_input1} shows that the POD approximation outperforms the other  methods for the input $u_1$. However, the interpolatory methods also provide adequate reduced-order systems for $u_1(t)$ even though the reduction is performed without any knowledge of $u_1(t)$.
\begin{figure}[!tb]
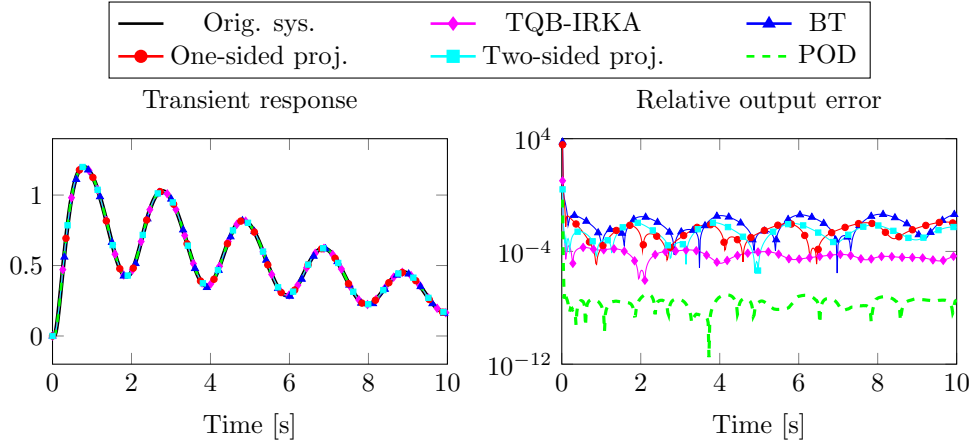

  \centering
  \begin{tikzpicture}
    \begin{customlegend}[legend columns=3, legend style={/tikz/every even column/.append style={column sep=1.0cm}} , legend entries={Orig. sys.,TQB-IRKA,BT, One-sided proj. , Two-sided proj., POD}, ]
      \addlegendimage{black,solid,line width = 1pt}
       \addlegendimage{mycolor1,solid,line width = 1pt, mark = diamond*}      \addlegendimage{blue,line width = 1pt, mark = triangle*}      \addlegendimage{red,line width = 1pt, mark = *}      \addlegendimage{mycolor2,line width = 1pt,mark =square* } \addlegendimage{green,line width = 1pt, dashed}
    \end{customlegend}
  \end{tikzpicture}
  \centering
  \setlength\fheight{3cm}  \setlength\fwidth{5.25cm}
	\tikzsetnextfilename{Figures_Revised/Chafee_Input1_response}%
	\input{Figures_Revised/Chafee_Input1_response.tikz}%
	\tikzsetnextfilename{Figures_Revised/Chafee_Input1_error}%
	\input{Figures_Revised/Chafee_Input1_error.tikz}%

  \caption{Chafee-Infante: comparison of responses for the boundary control input $u_1(t) = \left(1+\sin(\pi t )\right)\exp(-t/5) $.}
  \label{fig:chafee_input1}
 \end{figure}

To test the robustness of the reduced systems, we compare the time-domain simulations of the reduced systems with the original one in \Cref{fig:chafee_input2} for a slightly different input, namely $u_2(t) = 25\left(1+\sin(\pi t )\right) $. First, observe that  the POD approximation fails to reproduce the system's dynamics for the input $u_2$ accurately as POD is mainly an input-dependent algorithm. Moreover, the one-sided interpolatory projection method also performs worse for the input $u_2$. On the other hand, TQB-IRKA, BT, and the two-sided interpolatory projection method, all yield  very accurate reduced-order systems of comparable qualities;  TQB-IRKA produces marginally better reduced systems.  Once again it is important to emphasize that neither $u_1(t)$ nor $u_2(t)$ have entered the model reduction procedure in TQB-IRKA.  To give a quantitative comparison of the reduced systems for both inputs, $u_1$ and $u_2$, we report the  mean relative errors in \Cref{tab:chafee_RE} as well, which also provides us a similar information.

\begin{table}[tb!] \label{tab:chafee_RE}
	\begin{center}
		\begin{tabular}{ |c| c |c |c |c| c|}\hline
			Input & TQB-IRKA& BT & One-sided  & Two-sided  & POD \\ \hline
			$u_1(t)$&   $6.54\cdot 10^{-5}$ & $1.40\cdot 10^{-2}$ & $4.30\cdot 10^{-3}$ & $3.51\cdot 10^{-3}$ & $2.87\cdot 10^{-8}$\\ \hline
			$u_2(t)$ &  $1.63\cdot 10^{-3}$ & $1.43\cdot 10^{-2}$ & $4.59\cdot 10^{-1}$&$6.65\cdot 10^{-3}$ & $6.70\cdot 10^{-2}$\\ \hline
		\end{tabular}
	\end{center}
	\caption{Chafee-Infante: the mean relative errors of the output.}
\end{table}

 \begin{figure}[!tb]
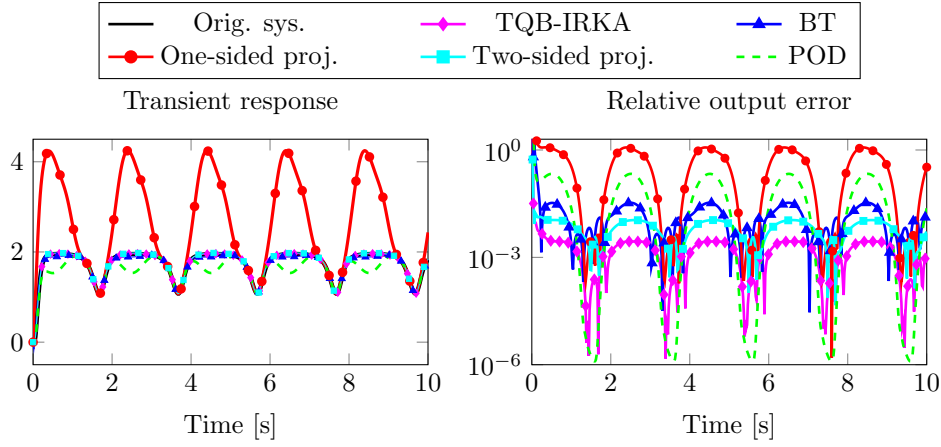

    \centering
  \begin{tikzpicture}
    \begin{customlegend}[legend columns=3, legend style={/tikz/every even column/.append style={column sep=1.0cm}} , legend entries={Orig. sys.,TQB-IRKA,BT, One-sided proj. , Two-sided proj., POD}, ]
    \addlegendimage{black,solid,line width = 1pt}
       \addlegendimage{mycolor1,solid,line width = 1pt, mark = diamond*}      \addlegendimage{blue,line width = 1pt, mark = triangle*}      \addlegendimage{red,line width = 1pt, mark = *}      \addlegendimage{mycolor2,line width = 1pt,mark =square* } \addlegendimage{green,line width = 1pt, dashed}
    \end{customlegend}
  \end{tikzpicture}
  \centering
  \setlength\fheight{3cm}  \setlength\fwidth{5.25cm}
	\tikzsetnextfilename{Figures_Revised/Chafee_Input2_response}%
	\input{Figures_Revised/Chafee_Input2_response.tikz}%
	\tikzsetnextfilename{Figures_Revised/Chafee_Input2_error}%
	\input{Figures_Revised/Chafee_Input2_error.tikz}%

 \caption{Chafee-Infante:  comparison of responses for the boundary control  input $u_2(t) = 25 (1+\sin(\pi t) ) $.}
\label{fig:chafee_input2}
 \end{figure}

In \Cref{thm:opt_rom},  we have  presented the quantities, denoted by $\epsilon_X$, where $X = \{C,B,N,H,\lambda\}$ which measure how  far the reduced-order system upon convergence of TQB-IRKA is from  satisfying the optimality conditions~\eqref{eq:optimalityconditions}. These quantities can be computed as shown in \eqref{eq:perturb_def}, and  are listed in \Cref{table:per_opt}, which shows a very small  magnitude of these perturbations.
\begin{table}[!tbp]
	\begin{center}
	\begin{tabular}{| c| c| c| c| c|}
		\hline
		 $\cE_C$&$\cE_B$&$\cE_N$&$\cE_H$&$\cE_\lambda$ \\ \hline
		  $1.35\times 10^{-8}$ & $8.85\times 10^{-12}$ & $8.84\times 10^{-16}$   & $1.77\times 10^{-13}$ & $1.44\times 10^{-11}$\\
		  \hline
	\end{tabular}
		\end{center}
				\caption{Chafee-Infante: perturbations to the optimality conditions.}
				\label{table:per_opt}
\end{table}
In  \Cref{rem:epsilon}, we have argued that for a weakly nonlinear QB system, we expect these quantities to be small. However, even for this  example with strong nonlinearity, i.e., $\|H\|$ and $\|N_k\|$ are not small at all,  the reduced-order system computed by TQB-IRKA  satisfies the optimality conditions~\eqref{eq:optimalityconditions} very accurately. This result also strongly supports the discussion of \Cref{rem:truncindex} that a small truncation index is expected to be enough in many cases.

Furthermore, since TQB-IRKA approximately minimizes the truncated $\cH_2$-norm of the error system, i.e. $\|\Sigma-\hat{\Sigma}\|_{\cH^{(\cT)}_2}$, we also  compare the truncated $\cH_2$-norm of the error system in \Cref{fig:chafee_h2norm}, where the $\hat{\Sigma}$ are constructed by various methods of different orders. As mentioned before, the reduced-order systems obtained via POD  preserve the structure of the original nonlinearities; therefore, the truncated $\cH_2$-norm definition, given in \Cref{lemma:H2_norm}, does not apply.

\begin{figure}[!tb]
	\centering
	\begin{tikzpicture}
	\begin{customlegend}[legend columns=-1, legend style={/tikz/every even column/.append style={column sep=0.75cm}} , legend entries={TQB-IRKA,BT, One-sided proj. , Two-sided proj.}, ]
	 \addlegendimage{mycolor1,solid,line width = 1pt, mark = diamond*}      \addlegendimage{blue,line width = 1pt, mark = triangle*}      \addlegendimage{red,line width = 1pt, mark = *}      \addlegendimage{mycolor2,line width = 1pt,mark =square* }
	\end{customlegend}
	\end{tikzpicture}
	\centering
	\setlength\fheight{3cm}  \setlength\fwidth{5.25cm}
	\tikzsetnextfilename{Figures_Revised/Chafee_RelativeH2}%
%
%
%

%
\begin{tikzpicture}

\begin{axis}[%
width=\fwidth,
height=\fheight,
scale only axis,
xmin=4,
xmax=24,
ymode=log,
ymin=1e-14,
ymax=100,
yminorticks=true,
ylabel = { $\tfrac{\|\Sigma-\hat\Sigma\|_{\cH_2^{(\cT)}}}{\|\Sigma\|_{\cH_2^{(\cT)}}}$},
xlabel = {Order of reduced system, $r$}
]
\addplot [color=mycolor1,solid,forget plot,  mark size=1.5pt,  mark = diamond*, line width = 1pt]
  table[row sep=crcr]{4	0.0142790534457948\\
6	0.000494360771202061\\
8	2.32206238369817e-05\\
10	1.41154826610375e-05\\
12	3.74762866345234e-07\\
14	3.74772673699468e-07\\
16	9.35270918460571e-09\\
18	7.37445428262208e-06\\
20	7.98724220692279e-10\\
22	5.96569307638348e-10\\
24	7.94478120466763e-12\\
};
\addplot [color=blue,solid,forget plot, mark size=1.5pt, mark = triangle*, line width = 1pt]
  table[row sep=crcr]{4	1.4718240579928e-05\\
6	1.47182405799391e-05\\
8	3.92120211052812e-07\\
10	3.92120211053404e-07\\
12	9.79200164536944e-09\\
14	9.79200164539788e-09\\
16	9.79200164514418e-09\\
18	2.21501036076859e-10\\
20	2.21501093477783e-10\\
22	4.25726564425091e-12\\
24	4.25744086464647e-12\\
};
\addplot [color=red,solid,forget plot,  mark size=1.2pt, mark = *, line width = 1pt]
  table[row sep=crcr]{4	71.4134724891061\\
6	44.967580555587\\
8	32.5523233760255\\
10	24.8721618622746\\
12	19.5499172200231\\
14	15.6189735896484\\
16	12.5963592904816\\
18	10.2078856343231\\
20	8.28569722935076\\
22	6.74445939263716\\
24	5.52648450040963\\
};
\addplot [color=mycolor2,solid, mark size=1.0pt, mark = square*, line width = 1pt]
  table[row sep=crcr]{4	0.0187179264757037\\
6	0.00107592285493758\\
8	6.41162167078835e-05\\
10	3.88156655858044e-06\\
12	2.35611544296551e-07\\
14	1.42964252809808e-08\\
16	8.66084746591953e-10\\
18	5.22018091577773e-11\\
20	3.15437903276381e-12\\
22	1.28587612212392e-14\\
24	2.82093500787423e-14\\
};
\end{axis}
\end{tikzpicture}

	\caption{Chafee-Infante:  comparison of the truncated $\cH_2$-norm of the error system, having obtained reduced systems of different orders via different methods. }
	\label{fig:chafee_h2norm}
\end{figure}
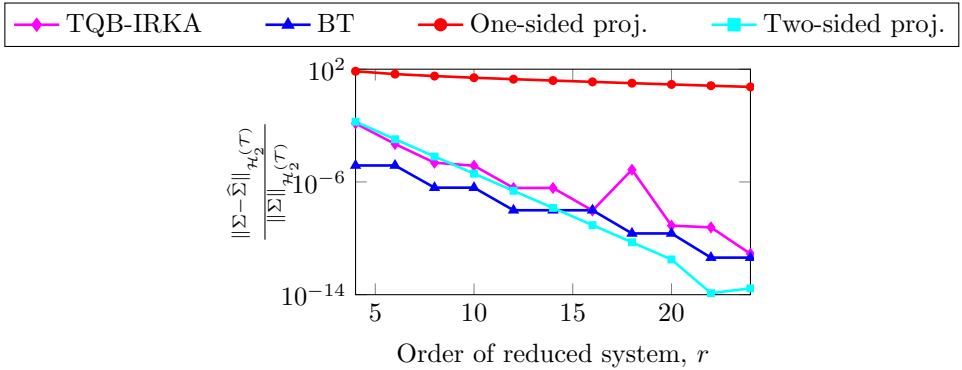

\Cref{fig:chafee_h2norm} indicates that the reduced-order systems obtained via one-sided interpolatory projection perform worst in the truncated $\cH_2$-norm, whereas the quality of the reduced-order systems obtained by TQB-IRKA, BT, and the two-sided interpolatory projection method are  comparable with respect to the truncated $\cH_2$-norm.  This also potentially explains why the reduced systems obtained via TQB-IRKA, BT and two-sided interpolatory projection are of the same quality for both control inputs $u_1$ and $u_2$, and the one-sided interpolatory projection  method provides the worst reduced systems, see~\Cref{fig:chafee_input2}. However,  it is important to emphasize that unlike for  linear dynamical systems,  the $\cH_2$-norm  and the $L^\infty$-norm of the output for nonlinear systems, including  QB systems, are not directly connected. This can be seen in  \Cref{fig:chafee_h2norm} as for reduced systems of order $r=10$, even though BT yields the smallest truncated $\cH_2$-norm of the error
system, but in time-domain simulations for inputs $u_1$ and $u_2$, it is not the best in terms of the $L^\infty$-norm of the output error. Nevertheless, the truncated $\cH_2$-norm of the error system is still a robust indicator for the quality of the reduced system.

 \subsection{A nonlinear RC ladder}\label{exa:RC}
We consider a nonlinear RC ladder, which consists of capacitors and nonlinear  I-V diodes. The characteristics of the  I-V diode are governed by exponential nonlinearities, which can also be rewritten in  QB form. For a  detailed description of the dynamics of this electronic circuit, we refer to~\cite{bai2002krylov,morGu09,li2005compact,phillips2000projection,morPhi03}. We set the number of capacitors in the ladder to $k = 500$, resulting in a QB system of order $n = 1000$. Note that the matrix $A$ of the obtained QB system has eigenvalues at zero; therefore, the  truncated $\cH_2$-norm does not exist. Moreover,  BT also cannot be employed as we need to solve Lyapunov equations that require a stable $A$ matrix. Thus, we shift the matrix $A$ to $A_s := A -0.01I_n$  to determine the projection matrices for TQB-IRKA and BT but we project the original system matrices.

We construct  reduced-order systems of order $r =10$ using all five different methods. In this example as well, we initialize TQB-IRKA randomly and it converges after $27$ iterations. We choose the scaling factor $\gamma = 0.01$. For this example, we determine the reduced Hessian by exploiting the particular structure of the Hessian. In order to compute reduced-order systems via POD, we first obtain $500$ snapshots of the true solution for the training input $u_1 = e^{-t}$ and then use the $10$ dominant modes to determine the projection.  

%
%

We first compare the accuracy of these reduced systems for the same training input $u_1(t) = e^{-t}$ that is also used to compute the POD basis. \Cref{fig:RC_input1} shows the transient responses and relative errors of the output for the input $u_1$. As one would expect, POD outperforms all other methods since the control input $u_1$ is the same as the training input for POD. Nonetheless, TQB-IRKA, BT, and two-sided projection also yield very good reduced-order systems, considering they are obtained without any prior knowledge of the input.

We also test the reduced-order systems for a different input than the training input, precisely, $u_2 = 2.5\left(\sin(\pi t/5)+1\right)$.  \Cref{fig:RC_input2} shows the transient responses and relative errors of the output for the input $u_2$.  We observe that POD does not perform as good as the other methods, such as TQB-IRKA, BT and two-sided projection methods, and the one-sided projection method completely fails to capture the system dynamics for the input $u_2$.  This can also be observed from \Cref{tab:RC_RE}, where  the mean relative errors of the outputs are reported.

\begin{figure}[!tb]
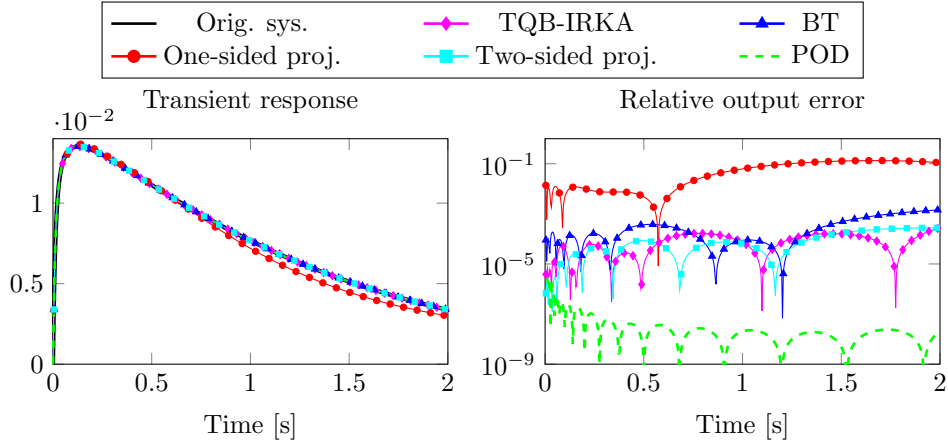

        \centering
        \begin{tikzpicture}
     \begin{customlegend}[legend columns=3, legend style={/tikz/every even column/.append style={column sep=1.0cm}} , legend entries={Orig. sys.,TQB-IRKA,BT, One-sided proj. , Two-sided proj., POD}, ]
    \addlegendimage{black,solid,line width = 1pt}
       \addlegendimage{mycolor1,solid,line width = 1pt, mark = diamond*}      \addlegendimage{blue,line width = 1pt, mark = triangle*}      \addlegendimage{red,line width = 1pt, mark = *}      \addlegendimage{mycolor2,line width = 1pt,mark =square* } \addlegendimage{green,line width = 1pt, dashed}
     \end{customlegend}
 \end{tikzpicture}

  \centering
	\setlength\fheight{3cm}
	\setlength\fwidth{5.25cm}
	\tikzsetnextfilename{Figures_Revised/RC_Input1_response}%
	\input{Figures_Revised/RC_Input1_response.tikz}%

	\tikzsetnextfilename{Figures_Revised/RC_Input1_error}%
	\input{Figures_Revised/RC_Input1_error.tikz}%

 \caption{ An RC circuit: comparison of responses for the input $u_1(t) = e^{-t}$.}
 \label{fig:RC_input1}
 \end{figure}
 \begin{figure}[!tb]
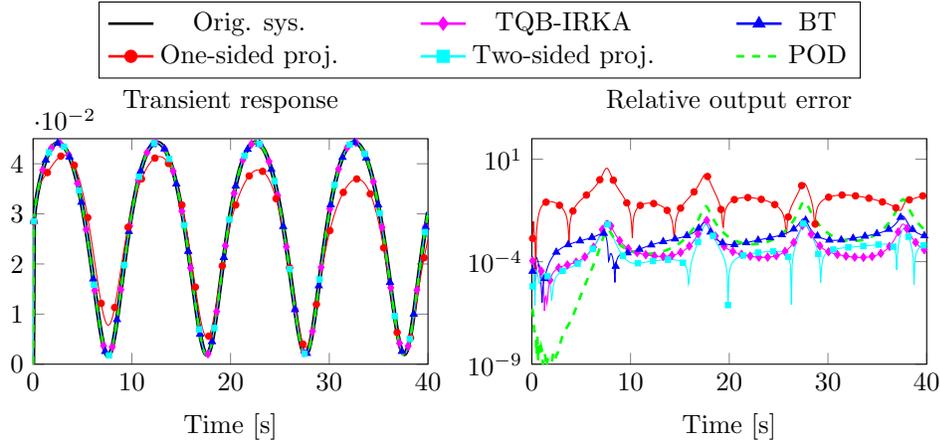

        \centering
        \begin{tikzpicture}
     \begin{customlegend}[legend columns=3, legend style={/tikz/every even column/.append style={column sep=1.0cm}} , legend entries={Orig. sys.,TQB-IRKA,BT, One-sided proj. , Two-sided proj.,POD}, ]
    \addlegendimage{black,solid,line width = 1pt}
       \addlegendimage{mycolor1,solid,line width = 1pt, mark = diamond*}      \addlegendimage{blue,line width = 1pt, mark = triangle*}      \addlegendimage{red,line width = 1pt, mark = *}      \addlegendimage{mycolor2,line width = 1pt,mark =square* } \addlegendimage{green,line width = 1pt, dashed}
     \end{customlegend}
 \end{tikzpicture}
   \centering
	\setlength\fheight{3cm}
	\setlength\fwidth{5.25cm}
	\tikzsetnextfilename{Figures_Revised/RC_Input2_response}%
	\input{Figures_Revised/RC_Input2_response.tikz}%

	\tikzsetnextfilename{Figures_Revised/RC_Input2_error}%
	\input{Figures_Revised/RC_Input2_error.tikz}%

 \caption{An RC circuit:  comparison of responses for the input $u_2 = 2.5\left(\sin(\pi t/5)+1\right)$.}
\label{fig:RC_input2}
 \end{figure}

 \begin{table}[!tb]\label{tab:RC_RE}
\begin{center}
 \begin{tabular}{ |c| c |c |c |c| c|}\hline
  Input & TQB-IRKA& BT & One-sided  & Two-sided  & POD \\ \hline
  $u_1(t)$&   $8.82\cdot 10^{-5}$ & $3.67\cdot 10^{-4}$ & $6.50\cdot 10^{-2}$ & $1.01\cdot 10^{-4}$ & $7.24\cdot 10^{-8}$\\ \hline
  $u_2(t)$ &  $1.12\cdot 10^{-3}$ & $2.15\cdot 10^{-3}$ & $2.32\cdot 10^{-1}$&$7.80\cdot 10^{-4}$ & $7.8\cdot 10^{-3}$\\ \hline
\end{tabular}
	\end{center}
\caption{An RC circuit: the mean absolute errors of the output.}
\end{table}

Further,  we compute the quantities as defined in \eqref{eq:perturb_def} using the reduced system of order $r=10$ obtained upon convergence of TQB-IRKA and  list them in \Cref{tab:per_opt_RC}. This also indicates that the obtained reduced-order system using TQB-IRKA satisfies all the optimality conditions~\eqref{eq:optimalityconditions} very accurately even though the nonlinear part of the system plays a significant role in the system dynamics.

\begin{table}[!tbp]\label{tab:per_opt_RC}
	\begin{center}
	\begin{tabular}{| c| c| c| c| c|}
		\hline
		 $\cE_C$&$\cE_B$&$\cE_N$&$\cE_H$&$\cE\lambda$ \\ \hline
		  $3.99\times 10^{-10}$ & $4.68\times 10^{-8}$ & $3.91\times 10^{-7}$   & $3.37\times 10^{-8}$ & $3.91\times 10^{-8}$\\
		  \hline
	\end{tabular}
		\end{center}
				\caption{An RC circuit: perturbations to the optimality conditions.}

\end{table}

Next, we also  compare the truncated $\cH_2$-norm of the error system, i.e., $\|\Sigma-\hat{\Sigma}\|_{\cH^{(\cT)}_2}$, in \Cref{fig:RC_h2norm}, where the $\hat{\Sigma}$ are constructed by various methods of different orders. The figure explains that TQB-IRKA yields better reduced-order systems with respect to the truncated $\cH_2$-norm.

At last, we mention here again that the reduced system  obtained via POD retains  the original exponential  nonlinearities; therefore, we cannot use the same definition of the truncated $\cH_2$-norm as in \Cref{lemma:H2_norm} for such nonlinear systems. Hence, POD is omitted in~\Cref{fig:RC_h2norm}.

\begin{figure}[!tb]
	\centering
	\begin{tikzpicture}
	\begin{customlegend}[legend columns=-1, legend style={/tikz/every even column/.append style={column sep=0.25cm}} , legend entries={TQB-IRKA,BT, One-sided proj. , Two-sided proj.}, ]
	 \addlegendimage{mycolor1,solid,line width = 1pt, mark = diamond*}      \addlegendimage{blue,line width = 1pt, mark = triangle*}      \addlegendimage{red,line width = 1pt, mark = *}      \addlegendimage{mycolor2,line width = 1pt,mark =square* }
	\end{customlegend}
	\end{tikzpicture}
	\centering
	\setlength\fheight{3cm}  \setlength\fwidth{5.25cm}
	\tikzsetnextfilename{Figures_Revised/RC_RelativeH2}%
%
%
%
%
\begin{tikzpicture}

\begin{axis}[%
width=\fwidth,
height=\fheight,
scale only axis,
xmin=6,
xmax=20,
ymode=log,
ymin=1e-15,
ymax=.1,
yminorticks=true,
ylabel = { $\tfrac{\|\Sigma-\hat\Sigma\|_{\cH_2^{(\cT)}}}{\|\Sigma\|_{\cH_2^{(\cT)}}}$},
xlabel = {Order of reduced system, $r$}
]
\addplot [color=mycolor1,solid,forget plot,  mark size=1.5pt,  mark = diamond*, line width = 1pt]
  table[row sep=crcr]{6	3.74121635367296e-07\\
8	1.62618126623308e-08\\
10	6.07255045047599e-10\\
12	3.98548431507971e-11\\
14	3.78701387038945e-12\\
16	1.43124791883027e-13\\
18	1.88750952246869e-14\\
20	1.69589972349558e-15\\
};
\addplot [color=blue,solid,forget plot, mark size=1.5pt, mark = triangle*, line width = 1pt]
  table[row sep=crcr]{6	1.729692439564e-05\\
8	2.52049118410186e-06\\
10	8.78600234086319e-08\\
12	3.12201471213223e-09\\
14	3.73204040718169e-10\\
16	1.42772087541737e-11\\
18	1.76651271954882e-12\\
20	7.24804950595815e-14\\
};
\addplot [color=red,solid,forget plot,  mark size=1.2pt, mark = *, line width = 1pt]
  table[row sep=crcr]{6	0.0184707044970682\\
8	0.0141236738706747\\
10	0.000103911533342528\\
12	0.000158707499471461\\
14	2.20773762020505e-05\\
16	7.07382352660754e-06\\
18	1.36110842252915e-06\\
20	2.25842970196528e-07\\
};
\addplot [color=mycolor2,solid, mark size=1.0pt, mark = square*, line width = 1pt]
  table[row sep=crcr]{6	1.8258405594116e-05\\
8	8.96979037181765e-07\\
10	2.45267090638633e-08\\
12	7.69327628731782e-09\\
14	7.18852201957828e-10\\
16	1.37609131425467e-11\\
18	2.25168735427191e-13\\
20	2.78115769876711e-14\\
};
\end{axis}
\end{tikzpicture}

	\caption{An RC circuit:  comparison of the truncated $\cH_2$-norm of the error system obtained via different methods of various orders. }
	\label{fig:RC_h2norm}
\end{figure}
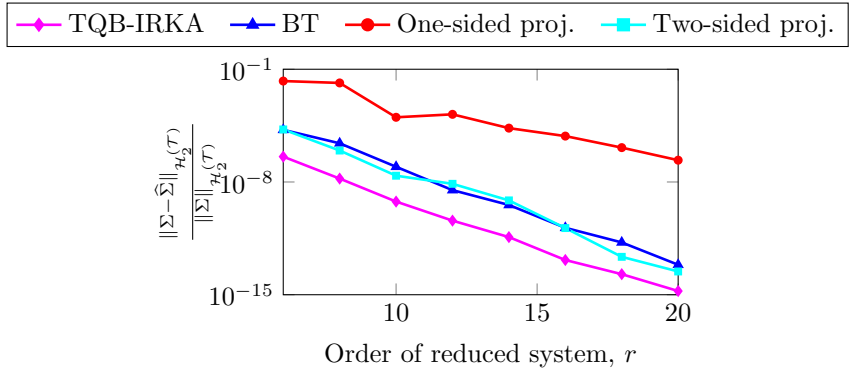

\subsection{The FitzHugh-Nagumo (F-N) system}\label{exa:FN}
This example considers the F-N system,  describing  activation and deactivation dynamics of  spiking neurons. This model is a  simplification of the Hodgkin-Huxley neuron model.
The dynamics of the system is governed by the following nonlinear coupled PDEs:
\begin{equation*}
\begin{aligned}
\epsilon v_t(x,t) & =\epsilon^2v_{xx}(x,t) + f(v(x,t)) -w(x,t) + q,\\
w_t(x,t) &= hv(x,t) -\gamma w(x,t) + q
\end{aligned}
\end{equation*}
with a nonlinear function $f(v(x,t)) = v(v-0.1)(1-v)$  and initial and boundary conditions as follows:
\begin{equation*}
\begin{aligned}
v(x,0) &=0, &\qquad\qquad w(x,0)&=0,\qquad \qquad & &x\in (0,L),\\
v_x(0,t) &= i_0(t), & v_x(1,t) &= 0, & &t\geq 0,
\end{aligned}
\end{equation*}
where $\epsilon = 0.015,~h=0.5,~\gamma = 2,~q = 0.05$, $L = 0.3$, and $i_0(t)$ is an actuator, acting as control input. The voltage and recovery voltage are denoted by $v$ and $w$, respectively. MOR for this model has been considered in~\cite{morBenB12a,morBTQBgoyal,morChaS10}. Furthermore,  we also consider the same output as considered in~\cite{morBenB12a,morBTQBgoyal}, which is the limit-cycle at the left boundary, i.e., $x = 0$. The system can be considered as having two inputs, namely $q$ and $i_0(t)$;  it has also two outputs, which are $v(0,t)$ and $w(0,t)$. This means that the system is a multi-input multi-output system as opposed to the two previous examples.  We discretize the governing equations using  a finite difference scheme. This leads to an ODE system, having cubic nonlinearity, which can then be transformed into the QB form. We consider $k=300$ grid points, resulting in a QB system of order $3  k = 900$.

We next determine reduced systems of order $r = 35$ using TQB-IRKA, BT, and POD. We choose the scaling factor $\gamma = 1$ in TQB-IRKA and it requires  $26$ iterations to converge. For this example, we also utilize the Kronecker structure of the Hessian to perform an efficient computation of the reduced Hessian.   In order to apply POD, we first collect $500$ snapshots of the original system for the time interval $(0,10s]$ using $i_0(t) = 50(\sin(2\pi t) -1)$ and then determine the projection based on the $35$ dominant modes.  The one-sided and two-sided subsystem-based interpolatory projection methods have major disadvantages in the MIMO QB case. The one-sided interpolatory projection approach of \cite{morGu09} can be applied to MIMO QB systems, however the dimension of the subspace $V$, and thus the dimension of the reduced model, increases quadratically due to the $V \otimes V$ term.
As we mentioned in \Cref{sec:intro}, two-sided interpolatory projection is only applicable to SISO QB systems. When the number of inputs and outputs are the same, which is the case in this example, one can still  employ~\cite[Algo. 1]{morBenB15}  to construct a reduced system. This is exactly what we did here. However, it is important to note that even though the method can be  applied numerically, it no longer ensures the theoretical subsystem interpolation property. Despite these drawbacks, for completeness of the comparison, we still construct reduced models using both one-sided and two-sided subsystem-based interpolatory projections.


Since the FHN system has two inputs and two outputs,  each interpolation point yields $6$ vectors in projection matrices $V$ and $W$. Thus, in order to apply the two-sided projection, we use $6$ linear $\cH_2$-optimal points and determine the reduced system of order $35$ by taking the  $35$ dominant vectors. We do the same for the one-sided interpolatory projection method to compute the reduced-order system.

%
%
Next, we compare the quality of the reduced-order systems and plot the transient responses and the absolute errors of the outputs in \Cref{fig:Fitz} for the training input $i_0(t) = 50(\sin(2\pi t) -1)$. 

\begin{figure}[!tb]
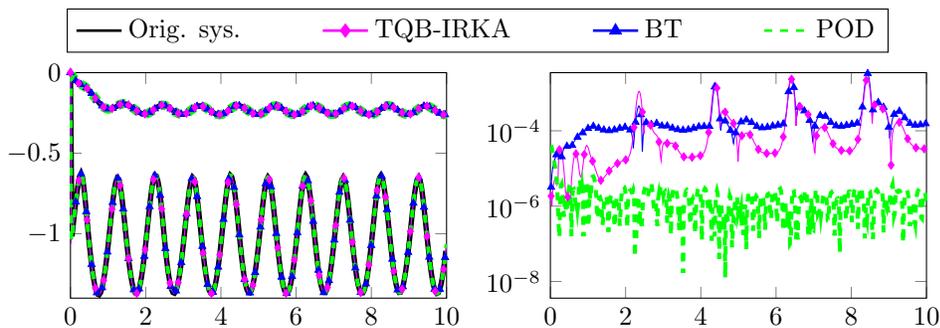

	\centering
	\begin{tikzpicture}
	\begin{customlegend}[legend columns=4, legend style={/tikz/every even column/.append style={column sep=1.0cm}} , legend entries={Orig. sys.,TQB-IRKA,BT,POD}, ]
	\addlegendimage{black,solid,line width = 1pt}
	\addlegendimage{mycolor1,solid,line width = 1pt, mark = diamond*}      \addlegendimage{blue,line width = 1pt, mark = triangle*}     \addlegendimage{green,line width = 1pt, dashed}
	\end{customlegend}
	\end{tikzpicture}
	
	\centering
	\setlength\fheight{3cm}
	\setlength\fwidth{5.0cm}
	\tikzsetnextfilename{Figures_Revised/Fitz_response1}%
	\input{Figures_Revised/Fitz_response1.tikz}%

	\tikzsetnextfilename{Figures_Revised/Fitz_error1}%
	\input{Figures_Revised/Fitz_error1.tikz}%

	\caption{The FitzHugh-Nagumo  system: comparison of the limit-cycle at the left boundary, $x = 0$ for $i_0(t) = 50(\sin(2\pi t)-1)$.}
	\label{fig:Fitz}
\end{figure}

\begin{figure}[!tb]
	\centering
	\begin{tikzpicture}
	\begin{customlegend}[legend columns=4, legend style={/tikz/every even column/.append style={column sep=1.0cm}} , legend entries={Orig. sys.,TQB-IRKA,BT,POD}, ]
	\addlegendimage{black,solid,line width = 1pt}
	\addlegendimage{mycolor1,solid,line width = 1pt, mark = diamond*}      \addlegendimage{blue,line width = 1pt, mark = triangle*}     \addlegendimage{green,line width = 1pt, dashed}
	\end{customlegend}
	\end{tikzpicture}
	
	\centering
	\setlength\fheight{3cm}
	\setlength\fwidth{5.0cm}
	\tikzsetnextfilename{Figures_Revised/Fitz_response2}%
	\input{Figures_Revised/Fitz_response2.tikz}%

	\tikzsetnextfilename{Figures_Revised/Fitz_error2}%
	\input{Figures_Revised/Fitz_error2.tikz}%

	\caption{The FitzHugh-Nagumo  system: comparison of the limit-cycle at the left boundary, $x = 0$ for $i_0(t) = 5\cdot 10^4t^3 \exp(-15t)$.}
	\label{fig:Fitz2}
\end{figure}

\begin{figure}[!tb]
	\centering
	\begin{tikzpicture}
	\begin{customlegend}[legend columns=4, legend style={/tikz/every even column/.append style={column sep=1.0cm}} , legend entries={Orig. sys.,TQB-IRKA,BT,POD}, ]
	\addlegendimage{black,solid,line width = 1pt}
	\addlegendimage{mycolor1,solid,line width = 1pt, mark = diamond*}      \addlegendimage{blue,line width = 1pt, mark = triangle*}     \addlegendimage{green,line width = 1pt, dashed}
	\end{customlegend}
	\end{tikzpicture}
	\centering
	\setlength\fheight{3cm}
	\setlength\fwidth{5.25cm}
	\tikzsetnextfilename{Figures_Revised/Fitz_3D_limit}%
	\input{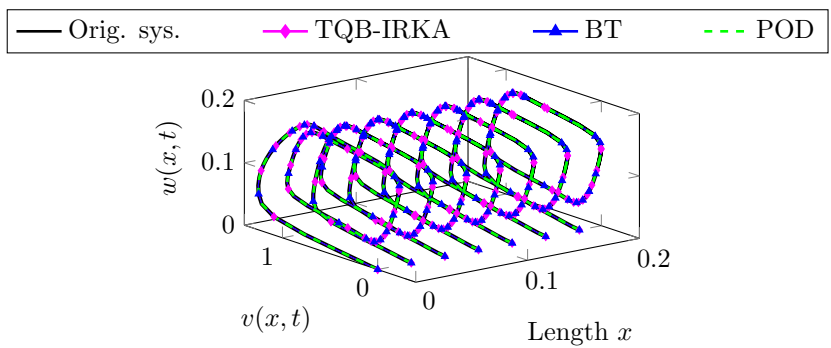}%

	\caption{The FitzHugh-Nagumo system: limit-cycle behavior of the original and reduced-order systems in the spatial domain.}
	\label{fig:Fitz_limitcycle}
\end{figure}


As anticipated, POD provides a very good reduced-order system since the POD basis is constructed by using the same trajectory.  Note that despite not reporting CPU times for the offline phases in this
paper, due to the very different levels of the implementations used for
the various methods, we would like to mention that in this example
the construction of the POD basis with the fairly sophisticated \matlab~integrator {\tt ode15s}  takes roughly 1.5
more CPU time than constructing the TQB-IRKA reduced-order model with
our vanilla implementation.

Between TQB-IRKA and BT, TQB-IRKA gives a marginally better reduced-order system as compared to BT for $i_0(t) = 50(\sin(2\pi t)-1)$, but still both are very competitive. In contrast, the one-sided and two-sided interpolatory projection methods produce unstable reduced-order systems {and are therefore omitted from the figures.} 

To test the robustness of the obtained reduced-order systems, we choose a different $i_0(t) = 5\cdot 10^4t^3 \exp(-15t)$ than the training one, and compare the transient responses in \Cref{fig:Fitz2}. In the figure, we observe that BT performs the best among all methods for  $i_0(t) = 5\cdot 10^4t^3 \exp(-15t)$, and POD and TQB-IRKA produce reduced-order systems of almost the same quality. One-sided and two-sided projection result in  unstable reduced-order systems for $i_0(t) = 5\cdot 10^4t^3 \exp(-15t)$ as well.  Furthermore, we also show the limit-cycles on the full space  obtained from the original and reduced-order systems in \Cref{fig:Fitz_limitcycle} for $i_0(t) = 5\cdot 10^4t^3 \exp(-15t)$, and observe that the reduced-order systems obtained using POD, TQB-IRKA, and BT, enable us to reproduce the limit-cycles, which is a typical neuronal dynamics as shown in \Cref{fig:Fitz,fig:Fitz_limitcycle}

As shown in~\cite{morBenB12a}, for particular interpolation points and higher-order moments, it might be possible to construct reduced-order systems via one-sided and two-sided interpolatory projection methods, which can reconstruct the limit-cycles. But as  discussed in~\cite{morBenB12a}, stability of the reduced-order systems is highly sensitive to these specific choices and even  slight modifications may lead to unstable systems. For the $\cH_2$ linear optimal interpolation points selection we made here, the one-sided and two-sided approaches were not able to reproduce the limit-cycles; thus motivating the usage of TQB-IRKA and BT once again, especially for the MIMO case.

Moreover, we report how far the reduced system of order $r = 35$ due to TQB-IRKA is from satisfying the optimality conditions~\eqref{eq:optimalityconditions}. For this, we compute the perturbations~\eqref{eq:perturb_def} and list them in \Cref{tab:per_opt_fitz}. This clearly indicates that the reduced-order system almost satisfies all optimality conditions.

\begin{table}[!tbp]\label{tab:per_opt_fitz}
	\begin{center}
		\begin{tabular}{| c| c| c| c| c|}
			\hline
			$\cE_C$&$\cE_B$&$\cE_N$&$\cE_H$&$\cE_\lambda$ \\ \hline
			$8.76\times 10^{-8}$ & $7.35\times 10^{-9}$ & $1.78\times 10^{-11}$   & $4.27\times 10^{-9}$ & $9.14\times 10^{-10}$\\
			\hline
		\end{tabular}
	\end{center}
	\caption{The FitzHugh-Nagumo system: perturbations to the optimality conditions.}
\end{table}

Lastly, we  measure the truncated $\cH_2$-norm of the error systems, using the reduced-order systems obtained via different methods of various orders.   We plot the relative truncated $\cH_2$-norm of the error systems in \Cref{fig:fitz_h2norm}. We observe that TQB-IRKA produces better reduced-order systems with respect to a truncated $\cH_2$-norm as compared to BT and one-sided projection. Furthermore, since we require  stability of the matrix $\hA$ in the reduced QB system~\eqref{sys:QBsysRed} to be able to compute a truncated $\cH_2$-norm of the error systems, we could not achieve this in the case of two-sided projection. For POD, we preserve the cubic nonlinearity in the reduced-order system; hence, the truncated $\cH_2$-norm definition in \Cref{lemma:H2_norm} does not apply for such systems. Thus, we cannot compute a truncated $\cH_2$-norm of the error system in the cases of   the two-sided projection and POD, thereby these methods are not included in \Cref{fig:fitz_h2norm}.

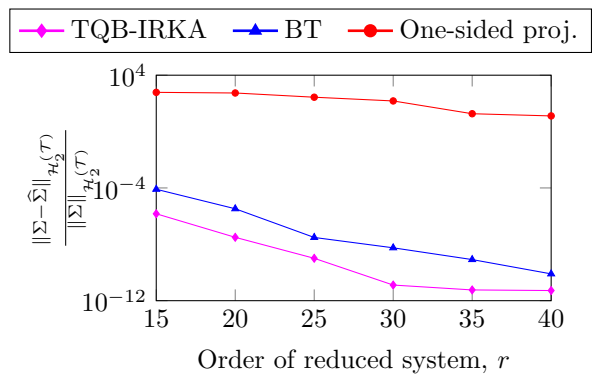
\begin{figure}[!tb]
	\centering
	\begin{tikzpicture}
	\begin{customlegend}[legend columns=-1, legend style={/tikz/every even column/.append style={column sep=0.25cm}} , legend entries={TQB-IRKA,BT, One-sided proj. }, ]
	\addlegendimage{mycolor1,solid,line width = 1pt, mark = diamond*}      \addlegendimage{blue,line width = 1pt, mark = triangle*}      \addlegendimage{red,line width = 1pt, mark = *}      \addlegendimage{mycolor2,line width = 1pt,mark =square* }
	\end{customlegend}
	\end{tikzpicture}
	\centering
	\setlength\fheight{3cm}  \setlength\fwidth{5.25cm}
	\tikzsetnextfilename{Figures_Revised/Fitz_RelativeH2_New}%
%
%
%
%
\begin{tikzpicture}

\begin{axis}[%
width=\fwidth,
height=\fheight,
scale only axis,
xmin=15,
xmax=40,
ymode=log,
ymin=1e-12,
ymax=10000,
yminorticks=true,
ylabel = { $\tfrac{\|\Sigma-\hat\Sigma\|_{\cH_2^{(\cT)}}}{\|\Sigma\|_{\cH_2^{(\cT)}}}$},
xlabel = {Order of reduced system, $r$}
]
\addplot [color=mycolor1,solid,forget plot,  mark size=1.5pt,  mark = diamond*]
  table[row sep=crcr]{15	1.46804713646712e-06\\
20	3.13146496876534e-08\\
25	1.01659612775136e-09\\
30	1.28643385628777e-11\\
35	5.78933559191141e-12\\
40	5.23896039888556e-12\\
};
 \addplot [color=blue,solid,forget plot, mark size=1.5pt, mark = triangle*]
  table[row sep=crcr]{15	8.05459419391671e-05\\
20	3.32979263924172e-06\\
25	3.06567590061616e-08\\
30	5.64627803911906e-09\\
35	8.25173882476676e-10\\
40	7.97840480896847e-11\\
};
\addplot [color=red,solid,forget plot,  mark size=1.2pt, mark = *]
  table[row sep=crcr]{15	606.278715687453\\
20	546.267387445325\\
25	274.016142411122\\
30	149.01357910024\\
35	18.4423763496779\\
40	12.9005315470788\\
};
\end{axis}
\end{tikzpicture}

	\caption{The FitzHugh-Nagumo system: comparison of the truncated $\cH_2$-norm of the error system, having obtained reduced systems of different orders using various methods. }
	\label{fig:fitz_h2norm}
\end{figure}

\section{Conclusions}\label{sec:conclusions}
In this paper, we have investigated the optimal model reduction problem for quadratic-bilinear control systems.   We first have defined the  $\cH_2$-norm for quadratic-bilinear systems based on the  kernels of the underlying Volterra series and also proposed a tru

ncated $\cH_2$-norm for the latter system class. We have then derived  first-order necessary conditions, minimizing a truncated $\cH_2$-norm of the error system. These optimality conditions lead to the proposed model reduction algorithm (TQB-IRKA), which iteratively constructs reduced order models  that \emph{approximately} satisfy the optimality conditions. We have also discussed the efficient computation of the reduced Hessian, utilizing the Kronecker structure of the Hessian of the QB system. Via several numerical examples, we have shown that 
TQB-IRKA outperforms the one-sided (including POD) and two-sided projection methods  in most cases, and is comparable to balanced truncation.   Especially for MIMO QB systems, TQB-IRKA and BT are the preferred methods of choice since the current framework of  two-sided subspace interpolatory projection method is only applicable to SISO systems and the extension of the one-sided interpolatory projection method to MIMO QB systems yields reduced models whose  dimension  increases quadratically with the number of inputs.  
Moreover, our numerical experiments reveal that the reduced systems via TQB-IRKA  and BT are more robust as compared to the one-sided and two-sided interpolatory projection methods in terms of stability of the reduced models although we do not have any theoretical justification of this observation yet.

As a future research topic, it would be interesting to use  sophisticated tools from  tensor theory to perform the computations related to Kronecker products efficiently and faster, thus  accelerating the iteration steps in TQB-IRKA.  Secondly, even though a stable random initialization has performed well in all of our numerical examples, a more educated initial guess could further improve the convergence  of TQB-IRKA.

\section*{Acknowledgments}
The authors would like to thank Dr.\ Tobias Breiten for providing the numerical examples, and \matlab ~implementations for one-sided and two-sided interpolatory projection methods.  We would also like to thank Dr.\ Patrick K{\"u}rschner for providing us \matlab~codes for solving  Lyapunov equations via ADI.  This research is supported by a research grant of the ``International Max Planck Research School (IMPRS) for Advanced Methods in Process and System Engineering (Magdeburg)".  The work of S. Gugercin was supported in part by NSF through Grant DMS-1522616 and by the Alexander von Humboldt Foundation. 

\bibliographystyle{siamplain}
\bibliography{mor}


\appendix
\begin{appendices}
 \renewcommand\thetable{\thesection\arabic{table}}
  \renewcommand\thefigure{\thesection\arabic{figure}}
  \section{Important relations of the Kronecker products}

  In this section, we provide some relations between Kronecker products, which will remarkably simplify the optimality conditions in \Cref{appen:proof}.
  \begin{lemma}{\cite[Lemma A.1]{morBenB12b}}\label{lemma:appenA1}
   Consider $f(x) \in \R^{s\times n}$, $A(y) \in \Rnn$, $G \in \R^{n\times q}$ with $x,y \in \R$, and let $\cL(y)$ be defined as
   $$\cL(y) =  -A(y)\otimes I_n - I_n\otimes A(y).$$
If the functions $f$ and $A$ are differentiable with respect to $x$ and $y$, respectively, then
\begin{multline*}
\dfrac{\partial}{\partial x} \left[ (\cI_s)^T \left(f(x)\otimes f(x)\right) \cL^{-1}(y) (G\otimes G ) \cI_q \right] \\
  =2(\cI_s)^T \left( \left(\dfrac{\partial}{\partial x}f(x)\right)\otimes f(x)\right) \cL^{-1}(y) (G\otimes G ) \cI_q.
\end{multline*}
Moreover, let $X,Y\in\Rnn$ be  symmetric matrices. Then,
\begin{align*}
 \dfrac{\partial}{\partial y}  \left[ \vecop{X}^T\cL^{-1}(y)\vecop{Y} \right] = 2\cdot \vecop{X}^T\cL^{-1}(y)\left(\dfrac{\partial}{\partial y} A(y)\otimes I_n  \right)\cL^{-1}(y)\vecop{Y}.
\end{align*}
 \end{lemma}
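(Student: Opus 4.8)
The plan is to prove each of the two identities separately, in both cases reducing the problem to a single symmetry property of the operator $\cL(y) = -A(y)\otimes I_n - I_n\otimes A(y)$: because $\cL$ is invariant under interchanging its two Kronecker factors, it commutes with the perfect-shuffle (commutation) matrix. Throughout I write $K_{(a,a)}$ for the $a^2\times a^2$ commutation matrix, characterized by $K_{(a,a)}\vecop{M}=\vecop{M^T}$ for $M\in\R^{a\times a}$ and by the intertwining relation $K_{(a,a)}(P\otimes Q)=(Q\otimes P)K_{(b,b)}$ for $P,Q\in\R^{a\times b}$, which is a standard commutation-matrix identity. The three facts I would record first are: (i) $K_{(s,s)}\cI_s=\cI_s$ and $K_{(q,q)}\cI_q=\cI_q$, since $\cI_s=\vecop{I_s}=\vecop{I_s^T}$; (ii) $K_{(n,n)}\cL^{-1}(y)=\cL^{-1}(y)K_{(n,n)}$, which follows from $K_{(n,n)}(A\otimes I_n)=(I_n\otimes A)K_{(n,n)}$ and $K_{(n,n)}(I_n\otimes A)=(A\otimes I_n)K_{(n,n)}$, giving $K_{(n,n)}\cL=\cL K_{(n,n)}$ and hence the claim after inversion; and (iii) $K_{(n,n)}(G\otimes G)=(G\otimes G)K_{(q,q)}$ for $G\in\R^{n\times q}$, directly from the intertwining relation.

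For the first identity, I would differentiate using bilinearity of the Kronecker product, $\partial_x\!\left(f(x)\otimes f(x)\right)=(\partial_x f)\otimes f + f\otimes(\partial_x f)$, so that the derivative equals the sum of $(\cI_s)^T[(\partial_x f)\otimes f]\cL^{-1}(G\otimes G)\cI_q$ and $(\cI_s)^T[f\otimes(\partial_x f)]\cL^{-1}(G\otimes G)\cI_q$. It then suffices to show the two terms coincide. Starting from the second term I would write $(\cI_s)^T=(\cI_s)^T K_{(s,s)}$ using (i) and symmetry of $K_{(s,s)}$, convert $K_{(s,s)}(f\otimes\partial_x f)=(\partial_x f\otimes f)K_{(n,n)}$ by the intertwining relation, slide $K_{(n,n)}$ past $\cL^{-1}$ via (ii) and past $G\otimes G$ via (iii), and finally absorb it into $\cI_q$ via (i). This reproduces exactly the first term, so the two summands are equal and the stated factor $2$ appears.

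For the second identity, I would use $\partial_y\cL^{-1}(y)=-\cL^{-1}(\partial_y\cL)\cL^{-1}$ together with $\partial_y\cL=-(\partial_y A\otimes I_n+I_n\otimes\partial_y A)$, which gives $\partial_y[\vecop{X}^T\cL^{-1}\vecop{Y}]=\vecop{X}^T\cL^{-1}(\partial_y A\otimes I_n+I_n\otimes\partial_y A)\cL^{-1}\vecop{Y}$. It remains to show the two summands agree, and this is exactly where symmetry of $X$ and $Y$ enters: $K_{(n,n)}\vecop{X}=\vecop{X^T}=\vecop{X}$ and likewise for $Y$. Starting from the $(I_n\otimes\partial_y A)$ term I would insert $K_{(n,n)}$ on both ends using these relations, commute the two outer $K_{(n,n)}$ with the two copies of $\cL^{-1}$ via (ii), convert $I_n\otimes\partial_y A$ into $\partial_y A\otimes I_n$ by the intertwining relation, and cancel the leftover $K_{(n,n)}K_{(n,n)}=I_{n^2}$. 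This yields the $(\partial_y A\otimes I_n)$ term, again producing the factor $2$.

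The main obstacle is purely bookkeeping: keeping the commutation-matrix sizes and the direction of each intertwining relation consistent as $K$ is slid through the chains $\cI_s^T,\ \cdot\otimes\cdot,\ \cL^{-1},\ G\otimes G,\ \cI_q$ (first part) and $\vecop{X}^T,\ \cL^{-1},\ \cdot\otimes\cdot,\ \cL^{-1},\ \vecop{Y}$ (second part). Once facts (i)--(iii) are in place, each identity becomes a short shuffle argument. This lemma is stated in~\cite{morBenB12b}, and the outline above is the self-contained version of that argument.
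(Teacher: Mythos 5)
Your proposal is correct. Note, however, that the paper itself offers no proof to compare against: Lemma A.1 is imported verbatim, with a citation, from \cite{morBenB12b}, and is used as a black box in the derivation of the optimality conditions in Appendix B. Your argument is a valid self-contained derivation of that cited result. The two key reductions are sound: the product rule gives the two summands $(\partial_x f)\otimes f$ and $f\otimes(\partial_x f)$ (respectively, the two summands $\partial_y A\otimes I_n$ and $I_n\otimes\partial_y A$ after applying $\partial_y\cL^{-1}=-\cL^{-1}(\partial_y\cL)\cL^{-1}$), and in each case the equality of the two summands follows from sliding the commutation matrix through the chain, using that $\cL$ commutes with $K_{(n,n)}$, that $K_{(n,n)}(G\otimes G)=(G\otimes G)K_{(q,q)}$, that $K_{(a,a)}\cI_a=\cI_a$, and, in the second identity, that $K_{(n,n)}\vecop{X}=\vecop{X}$ precisely because $X$ (and $Y$) is symmetric --- so the symmetry hypothesis is used exactly where it must be. The dimensions and directions of your intertwining relations all check out, and the factor $2$ emerges correctly in both identities. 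This is essentially the same symmetry mechanism underlying the original proof in \cite{morBenB12b}; phrasing it via the commutation (perfect-shuffle) matrix is a clean, standard packaging of that argument, and it is also consonant with how the present paper handles Kronecker-order swaps elsewhere (e.g., the permutation matrix $S$ in the proof of its Lemma 2.3).
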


\begin{lemma}\label{lemma:rel_FPM}
 Let $\cF$, $\hat{\cF}$ be defined as follows:
 \begin{equation*}
  \begin{aligned}
   \cF &= \bbm I_n & \0\ebm \otimes \bbm I_n & \0\ebm\quad\text{and}\quad
   \hat{\cF} = \bbm \0 & I_{r}\ebm \otimes \bbm \0 & I_{r}\ebm,
   \end{aligned}
 \end{equation*}
 and consider a permutation matrix
  \begin{equation}\label{eq:Per_M1}
  \begin{aligned}
   M& = \bbm M_{nnr} & \0 \\ \0 & M_{rnr} \ebm,
  \end{aligned}
 \end{equation}
where $M_{pqr}$ is defined in~\eqref{eq:per_M}. Moreover, let the  two column vectors $x$ and $y$ be  partitioned as
\begin{equation*}
 x = \bbm x_1^T & x_2^T & x_3^T & x_4^T \ebm^T\quad \text{and}\quad y = \bbm y_1^T & y_2^T & y_3^T & y_4^T \ebm^T,
\end{equation*}
where $x_1,y_1\in \R^{n^2}$, $x_{\{2,3\}},y_{\{2,3\}} \in \R^{nr}$, and $x_4,y_4 \in \R^{r^2}$.
Then, the following relations hold:
 \begin{align}
  (\hat{\cF}\otimes \cF)T_{(n+r,n+r)}(M\otimes M)(x\otimes y) &=  T_{(n,r)}(x_3\otimes y_3),~~\mbox{and} \label{eq:hatFandFrel}\\
    (\hat{\cF}\otimes \hat{\cF})T_{(n+r,n+r)}(M\otimes M)(x\otimes y) &= T_{(r,r)}(x_4\otimes y_4),\label{eq:hatFhatFrel}
 \end{align}
where $T_{(n,m)}$ is also a permutation matrix  given by
\begin{align*}
  T_{(n,m)} &= I_m\otimes  \bbm I_m \otimes e^n_1,\ldots,I_m \otimes e^n_n \ebm \otimes I_n.
 \end{align*}
\end{lemma}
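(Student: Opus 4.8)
The plan is to reinterpret the block-partitioned vectors $x$ and $y$ as ordinary vectorizations of $(n+r)\times(n+r)$ matrices, so that the composite map $T_{(n+r,n+r)}(M\otimes M)$ becomes nothing more than the vectorization of a Kronecker product. Concretely, I would write $x_1 = \vecop{X_1}$, $x_2 = \vecop{X_2}$, $x_3 = \vecop{X_3}$, $x_4 = \vecop{X_4}$ with blocks $X_1\in\R^{n\times n}$, $X_2\in\R^{r\times n}$, $X_3\in\R^{n\times r}$, $X_4\in\R^{r\times r}$, and assemble $X = \bbm X_1 & X_3 \\ X_2 & X_4 \ebm$; similarly for $y$ and $Y$. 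The first key step is to show that $M$ converts this block-wise vectorization into the standard one, i.e.\ $Mx = \vecop{X}$. Unpacking the definition of $M_{pqr}$ in~\eqref{eq:per_M} and applying~\eqref{eq:trace2}, one verifies that $M_{nnr}\bbm \vecop{X_1} \\ \vecop{X_2}\ebm = \vecop{\bbm X_1 \\ X_2 \ebm}$ and $M_{rnr}\bbm \vecop{X_3} \\ \vecop{X_4}\ebm = \vecop{\bbm X_3 \\ X_4 \ebm}$; since $\vecop{X}$ is precisely the two block-columns $\bbm X_1 \\ X_2\ebm$ and $\bbm X_3 \\ X_4 \ebm$ stacked in order, the block-diagonal $M$ yields $Mx=\vecop{X}$ (and $My=\vecop{Y}$).

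Next I would apply the mixed-product property~\eqref{eq:kronprodpro} to get $(M\otimes M)(x\otimes y) = Mx\otimes My = \vecop{X}\otimes\vecop{Y}$, and then invoke \Cref{lemma:change_kron} with blocks of size $(n+r)\times(n+r)$ to obtain $T_{(n+r,n+r)}\bigl(\vecop{X}\otimes\vecop{Y}\bigr) = \vecop{X\otimes Y}$. Thus the common factor on the left-hand side of both identities collapses to $\vecop{X\otimes Y}$, and it only remains to apply the two projectors.

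For the projectors I would again use~\eqref{eq:trace2}, giving $(\hat{\cF}\otimes \cF)\vecop{X\otimes Y} = \vecop{\cF(X\otimes Y)\hat{\cF}^T}$ and $(\hat{\cF}\otimes\hat{\cF})\vecop{X\otimes Y} = \vecop{\hat{\cF}(X\otimes Y)\hat{\cF}^T}$. Since $\cF = \bbm I_n & \0\ebm\otimes\bbm I_n & \0\ebm$ and $\hat{\cF}^T = \bbm \0 \\ I_r\ebm\otimes\bbm \0 \\ I_r\ebm$, two further applications of~\eqref{eq:kronprodpro} give $\cF(X\otimes Y)\hat{\cF}^T = \bigl(\bbm I_n & \0\ebm X \bbm \0 \\ I_r\ebm\bigr)\otimes\bigl(\bbm I_n & \0\ebm Y \bbm \0 \\ I_r\ebm\bigr) = X_3\otimes Y_3$, the top-right blocks, and analogously $\hat{\cF}(X\otimes Y)\hat{\cF}^T = X_4\otimes Y_4$, the bottom-right blocks. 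A final application of \Cref{lemma:change_kron} then converts $\vecop{X_3\otimes Y_3} = T_{(n,r)}(\vecop{X_3}\otimes\vecop{Y_3}) = T_{(n,r)}(x_3\otimes y_3)$ and $\vecop{X_4\otimes Y_4} = T_{(r,r)}(x_4\otimes y_4)$, which are exactly~\eqref{eq:hatFandFrel} and~\eqref{eq:hatFhatFrel}.

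The main obstacle is the very first step: correctly identifying $M$ as the change-of-layout map from block-separated to standard column-major vectorization. This requires tracking the precise block sizes --- in particular that $x_2$ and $x_3$, though both of length $nr$, correspond to the $r\times n$ and $n\times r$ off-diagonal blocks respectively --- together with the column-major ordering implicit in $\vecop{\cdot}$. Once this identification is secured, the remaining steps are routine bookkeeping with the Kronecker identities~\eqref{eq:trace2} and~\eqref{eq:kronprodpro} and \Cref{lemma:change_kron}, and the fact that $x_2,y_2$ drop out is automatic, since $\cF$ and $\hat{\cF}$ extract only the top-left and bottom-right blocks.
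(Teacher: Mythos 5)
Your proof is correct, but it follows a genuinely different route from the paper's. The paper proves the lemma by brute-force expansion: it multiplies out $(\hat{\cF}\otimes\cF)T_{(n+r,n+r)}$ explicitly, splits $I_{n+r}$ into its $n$- and $r$-blocks, computes the resulting block columns $\cL_i$ one index range at a time (distinguishing $1\le i\le n$ from $n<i\le n+r$), then multiplies by $M\otimes M$, introduces a further auxiliary permutation $\cQ$ to regroup $x_3\otimes y$, and checks which block products annihilate; the second identity \eqref{eq:hatFhatFrel} is then left to the reader as "similar." You instead identify the structural meaning of the ingredients: $M$ is exactly the layout-conversion map taking the blockwise vectorization $x$ to the standard vectorization $\vecop{X}$ of the assembled matrix $X=\bbm X_1 & X_3\\ X_2 & X_4\ebm$, so that by the mixed-product property and \Cref{lemma:change_kron} the common prefix collapses to $T_{(n+r,n+r)}(M\otimes M)(x\otimes y)=\vecop{X\otimes Y}$, after which $\hat{\cF}\otimes\cF$ and $\hat{\cF}\otimes\hat{\cF}$ act as block extractors via $\vecop{ABC}=(C^T\otimes A)\vecop{B}$, yielding $\vecop{X_3\otimes Y_3}$ and $\vecop{X_4\otimes Y_4}$, which convert back to the claimed right-hand sides by one more application of \Cref{lemma:change_kron}. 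Your shape bookkeeping is the one delicate point and you get it right: the structure of $M_{nnr}$ and $M_{rnr}$ forces $X_2\in\R^{r\times n}$ and $X_3\in\R^{n\times r}$, which is also consistent with how the lemma is later applied (e.g., $p_3=\vecop{V_1}$ with $V_1\in\R^{n\times r}$ in \Cref{appen:proof}). What your approach buys is brevity, a transparent explanation of \emph{why} the identities hold (permutations as layout changes, projectors as block selection), and a symmetric proof of both \eqref{eq:hatFandFrel} and \eqref{eq:hatFhatFrel} at no extra cost; what the paper's computation buys is self-containedness at the level of raw index manipulations, without requiring the reader to accept the reinterpretation of the partitioned vectors as matrices.
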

\begin{proof}
 Let us begin by considering the following equation:
  \begin{equation*}
 \begin{aligned}
  (\hat{\cF}\otimes {\cF})T_{(n+r,n+r)} &= \begin{bmatrix} \0& I_r \otimes \bbm \0& I_r\ebm \otimes \cF \end{bmatrix} \left(I_{n+r}\otimes \cG\right),
  \end{aligned}
 \end{equation*}
 where $\cG =  \bbm I_{n+r} \otimes e^{n+r}_1,\ldots,I_{n+r}\otimes e^{n+r}_{n+r} \ebm \otimes I_{n+r} $. Next, we split $I_{n+r}$ as $I_{n+r} = \bbm I_n & \0 \\ \0 & I_r \ebm$, leading to
  \begin{equation}\label{eq:F2f1P_prod}
  \begin{aligned}
   (\hat{\cF}\otimes {\cF})T_{(n+r,n+r)} &= \begin{bmatrix} \0& I_r \otimes \bbm \0& I_r\ebm \otimes \cF \end{bmatrix} \bbm I_n \otimes \cG & \0 \\ \0 & I_r\otimes \cG \ebm \\
   &= \bbm \0&  \begin{pmatrix}I_r \otimes \bbm \0 & I_r\ebm \otimes \cF \end{pmatrix} \begin{pmatrix} I_r\otimes \cG \end{pmatrix}\ebm \\
  &= \bbm \0&  I_r \otimes \Big( \left(\bbm \0 & I_r\ebm \otimes \cF \right)\cG \Big) \ebm.
   \end{aligned}
  \end{equation}
 Now, we  investigate the following equation (a component of the previous equation):
 \begin{equation*}
  \begin{aligned}
   \begin{pmatrix} \bbm \0 & I_r\ebm \otimes \cF \end{pmatrix}\cG_i =: \cL_i,
  \end{aligned}
 \end{equation*}
where $\cG_i$ is $i$th  block column of the matrix $\cG$ given by $\cG_i = I_{n+r} \otimes e^{n+r}_i\otimes I_{n+r}$. This  yields
 \begin{equation*}
  \begin{aligned}
   \cL_i &= \left(\bbm \0 & I_r\ebm \otimes \cF \right) \bpm I_{n+r}\otimes  e^{n+r}_i\otimes I_{n+r}\epm\\
   &= \bpm \bbm \0 & I_r\ebm I_{n+r} \epm \otimes \bpm \cF (e^{n+r}_i\otimes I_{n+r})\epm
   = \bbm  \0 & I_r\ebm  \otimes \bpm \cF (e^{n+r}_i\otimes I_{n+r})\epm.
  \end{aligned}
 \end{equation*}
 Assuming that $1\leq i\leq n$, we can write $\cL_i$ as
  \begin{equation*}
  \begin{aligned}
   \cL_i    &= \bbm  \0 & I_r\ebm  \otimes \bpm \cF \left(\bbm e^{n}_i \\ \0 \ebm \otimes I_{n+r}\right)\epm
   = \bbm  \0 & I_r\ebm  \otimes \bpm \bbm I_n\otimes \bbm I_n & \0  \ebm & \0 \ebm \bbm e^{n}_i\otimes I_{n+r} \\ \0 \ebm \epm\\
   &= \bbm  \0 & I_r\ebm  \otimes  \bbm e^n_i\otimes \bbm I_n & \0  \ebm\ebm
   = \bbm  \0 & I_r \otimes  \bpm e^n_i\otimes \bbm I_n & \0  \ebm \epm \ebm.
  \end{aligned}
 \end{equation*}
  Subsequently, we assume $n+r\geq i> n$, which leads to
  \begin{equation*}
  \begin{aligned}
   \cL_i    &= \bbm  \0 & I_r\ebm  \otimes \bpm \cF \left(\bbm \0 \\ e^{r}_{i-n}\ebm \otimes I_{n+r}\right)\epm\\
   &= \bbm  \0 & I_r\ebm  \otimes \bpm \bbm I_n\otimes \bbm I_n & \0  \ebm & \0 \ebm \bbm \0 \\ e^{r}_{i-n}\otimes I_{n+r} \ebm \epm = \0.
  \end{aligned}
 \end{equation*}
 Thus,
 \begin{equation*}
  \bpm \bbm \0 & I_r\ebm \otimes \cF \epm\cG = \bbm \cL_1 ,\cL_2,\ldots, \cL_n ,\0  \ebm =: \cL .
 \end{equation*}
Inserting the above expression in~\eqref{eq:F2f1P_prod} yields
\begin{equation*}
 (\hat{\cF}\otimes \cF)T_{(n+r,n+r)} = \bbm \0& I_r\otimes \cL  \ebm.
\end{equation*}
Now, we are ready to investigate the following term:
\begin{equation*}
\begin{aligned}
(\hat{\cF}\otimes \cF) T_{(n+r,n+r)} (M\otimes M) & = \bbm \0 &  I_r \otimes \cL \ebm\bbm M_{nnr}\otimes M & \0 \\ \0 & M_{rnr}\otimes M \ebm \\
&=  \bbm \0 &  I_r \otimes \cL \ebm \bbm M_{nnr}\otimes M & \0 \\ \0 & M_{rnr}\otimes M \ebm \\
&=  \bbm \0 &  \left(I_r \otimes\cL  \right)\left(M_{rnr}\otimes M \right) \ebm.
\end{aligned}
\end{equation*}
Further, we consider the second block column of the above relation and substitute for $M_{nnr}$ and $M_{rnr}$ using~\eqref{eq:per_M} to get
 \begin{equation}\label{eq:1}
 \begin{aligned}
 &\left(I_r \otimes \cL  \right)\left(M_{rnr}\otimes M \right)  =\left(I_r \otimes \cL  \right)\bbm I_r \otimes \bbm I_n \\ \0  \ebm \otimes M & I_r \otimes \bbm \0 \\I_r \ebm \otimes M \ebm \\ \allowdisplaybreaks
  &\qquad\qquad\quad = \bbm  \left(I_r \otimes \cL  \right) \left(I_r \otimes \bbm I_n \\ \0 \ebm \otimes M \right)&  \left(I_r \otimes\cL  \right)\left( I_r \otimes \bbm \0 \\I_r \ebm \otimes M\right) \ebm.
 \end{aligned}
 \end{equation}
Our following task is to examine each  block column of~\eqref{eq:1}.   We begin with the first block; this is
 \begin{equation*}
 \begin{aligned}
 \left(I_r \otimes \cL  \right) \left(I_r \otimes \bbm I_n \\ \0  \ebm  \otimes M \right) & = I_r \otimes \left(\cL  \left(  \bbm I_n \\ \0  \ebm \otimes M \right)\right) = I_r \otimes \left(\cL  \bbm I_n \otimes M\\ \0 \ebm  \right)\\
 &= I_r \otimes \bbm \cL_1M, \ldots ,\cL_nM \ebm.
   \end{aligned}
 \end{equation*}
We next aim to simplify the term $\cL_iM$, which appears in the previous equation:
\begin{align}
\cL_i M &= \left[ \0~~ I_r\otimes \bbm  e^{n}_j\otimes \bbm I_n~~\0  \ebm  \ebm \right]  \bbm M_{nnr} & \0 \\ \0 &M_{rnr} \ebm  \nonumber \\
&= \left[ \0~~ \left(I_r\otimes \bbm  e^{n}_j\otimes \bbm I_n~~\0  \ebm  \ebm \right)M_{rnr}\right]  \nonumber \\
&= \left[ \0~~ \left(I_r\otimes   e^{n}_j\otimes \bbm I_n~~\0  \ebm   \right)\bbm I_r \otimes \bbm I_n \\~~ \0 \ebm  & I_r \otimes \bbm \0\\I_r \ebm \ebm \right]  \nonumber\\
&= \left[ \0~~ \left(I_r\otimes   e^{n}_j\otimes I_n   \right) ~~\0 \right]  := \cX_i.\label{eq:xexpression}
\end{align}
The second  block column of~\eqref{eq:1} can be studied in a similar fashion, and it can be shown that
 \begin{equation*}
 \begin{aligned}
 \left(I_r \otimes \cL \right) \left(I_r \otimes \bbm \0 \\I_r \ebm \otimes M \right)& = \0.
   \end{aligned}
 \end{equation*}
Summing up all these expressions, we obtain
 \begin{equation*}
 \begin{aligned}
 (\hat{\cF}\otimes \cF) T_{(n+r,n+r)} (M\otimes M) & =  \bbm \0 &  \left(I_r \otimes \bbm \cX_1,\ldots,\cX_n \ebm   \right)  & \0 \ebm,
 \end{aligned}
 \end{equation*}
 where $\cX_i$ is defined in \eqref{eq:xexpression}. This gives
 \begin{equation}
 \begin{aligned}
   (\hat{\cF}\otimes \cF) T_{(n+r,n+r)}(M\otimes M)(x\otimes y)
&= \bbm \0 & I_r\otimes \bbm \cX_1,\ldots,\cX_n \ebm & \0 \ebm (x\otimes y)\\
   & = \left(I_r\otimes \bbm \cX_1,\ldots,\cX_n \ebm \right) (x_3\otimes y).\label{eq:F2F1PM_relation}
 \end{aligned}
\end{equation}
  Next, we define another permutation
  \begin{equation*}
 \cQ = \bbm \underbrace{I_{r}\otimes I_n\otimes \bbm I_{n^2} \\ \0\\\0\\\0\ebm}_{\cQ_1} & \underbrace{I_{r}\otimes I_n\otimes \bbm \0\\ I_{nr}\\ \0 \\ \0\ebm}_{\cQ_2} &\underbrace{I_{r}\otimes I_n\otimes \bbm \0 \\ \0\\I_{nr}\\0\ebm}_{\cQ_3} & \underbrace{I_{r}\otimes I_n\otimes \bbm \0 \\ \0\\ \0\\I_{r^2}\ebm}_{\cQ_4} \ebm, 
 \end{equation*}
which allows us to write
$$(x_3\otimes y) = \cQ\bbm x_3\otimes y_1\\x_3\otimes y_2\\x_3\otimes y_3\\x_3\otimes 	y_4\ebm.$$
Substituting this into \eqref{eq:F2F1PM_relation} results in
\begin{equation*}
 \begin{aligned}
   &(\hat{\cF}\otimes \cF) T_{(n+r,n+r)}(M\otimes M)(x\otimes y) \\
   & \qquad\qquad\qquad= \bpm  I_r\otimes \bbm \cX_1,\ldots,\cX_n \ebm  \epm \bbm \cQ_1&\cQ_2&\cQ_3&\cQ_4\ebm \bbm x_3\otimes y_1\\x_3\otimes y_2\\x_3\otimes y_3\\x_3\otimes 	y_4\ebm.
  \end{aligned}
  \end{equation*}
  Now, it can be easily verified that $\bpm  I_r\otimes \bbm \cX_1,\ldots,\cX_n \ebm  \epm \bbm \cQ_1&\cQ_2&\cQ_4\ebm = 0$.  Thus, we obtain
\begin{equation*}
\begin{aligned}   
   &(\hat{\cF}\otimes \cF) T_{(n+r,n+r)}(M\otimes M)(x\otimes y)  = \bpm  I_r\otimes \bbm \cX_1,\ldots,\cX_n \ebm  \epm \cQ_3 (x_3\otimes y_3)\\ 
   & \qquad\qquad\qquad= \bpm  I_r\otimes \bbm \cX_1,\ldots,\cX_n \ebm  \epm \bpm I_{r}\otimes I_n\otimes \bbm \0 \\ \0\\I_{nr}\\ \0\ebm \epm (x_3\otimes y_3)\\
   & \qquad\qquad\qquad= \left(I_r \otimes \bbm I_r\otimes   e^{n}_1\otimes I_n   , \ldots , I_r\otimes   e^{n}_1\otimes I_n \ebm     \right) \left(x_3\otimes y_3\right)
 = T_{(n,r)}(x_3\otimes y_3).
 \end{aligned}
\end{equation*}
One can prove the relation~\eqref{eq:hatFandFrel} in a similar manner. However, for the brevity of the paper, we omit it. This concludes the proof.
\end{proof}

We will find similar expressions as~\eqref{eq:hatFandFrel} and~\eqref{eq:hatFhatFrel} in Appendix B, where we  then make use of \Cref{lemma:rel_FPM} to simplify them.

 \end{appendices}
\begin{appendices}\label{appem:proofthm}
\renewcommand\thetable{\thesection\arabic{table}}
\renewcommand\thefigure{\thesection\arabic{figure}}
\section{Proof of Theorem $\text{\ref{thm:optimality_conditions}}$}\label{appen:proof}
\subsection*{Optimality conditions with respect to \texorpdfstring{\boldmath{$\tC$}}{\text{$\tC$}}}
We start with deriving the optimality conditions by taking the derivative of the error functional $\cE$~\eqref{eq:errexp2} with respect to $\tC$. By using \Cref{lemma:appenA1}, we obtain
\begin{equation*}
 \begin{aligned}
  \frac{\partial \cE^2}{\partial \tC_{ij}} & = 2(\cI_p)^T \left( \bbm \0 ~ -e^p_i(e^r_j)^T \ebm \otimes \tC^e\right)
   \left(-\tA^e \otimes I_{n+r}   - I_{n+r}\otimes  \tA^e \right)^{-1} \\
   &\Big( \left(\tB^e \otimes \tB^e \right)\cI_m
      + \sum_{k=1}^m\left( \tN^e_k \otimes  \tN^e_k\right) \cP_l   + \left(\tH^e \otimes \tH^e \right)T_{(n+r,n+r)} ( \cP_l\otimes  \cP_l) \Big),
  \end{aligned}
 \end{equation*}
 where $\cP_l$ is defined in~\eqref{eq:p1_exp}.
On simplification, we get
  \begin{align}
    \frac{\partial \cE^2}{\partial \tC_{ij}}     & = 2(\cI_p)^T \left(   -e^p_i(e^r_j)^T  \otimes \tC^e \right)   \left( -\Lambda  \otimes I_{n+r}   - I_{r}\otimes \tA^e \right)^{-1}\Big(   \left(\tB  \otimes \tB^e \right)\cI_m \nonumber\\
    &\quad +  \sum_{k=1}^m \left(\tN_k  \otimes \tN^e_k\right) \cP^{(2)}_l +   \left(\tH \hat{ \cF}  \otimes \tH^e \right) T_{(n+r,n+r)} (\cP_l \otimes \cP_l)\Big),\nonumber\\
   & = 2(\cI_p)^T \left(   -e^p_i(e^r_j)^T  \otimes \tC^e \right)\nonumber
    \left( M_{rnr} \left( - \cJ_\Lambda  - \cJ_A \right) M_{rnr}^T \right)^{-1}\Big(  \left( \tB  \otimes \tB^e \right)\cI_m \nonumber\\
    &\quad +  \sum_{k=1}^m \left(\tN_k  \otimes \tN^e_k\right) \cP^{(2)}_l +  \left( \tH \hat{\cF}  \otimes \tH^e \right)T_{(n+r,n+r)}(\cP_l \otimes \cP_l)\Big), \label{eq:derC}
 \end{align}
where
\begin{align*}
 \cJ_\Lambda = \bbm \Lambda\otimes I_n & \0  \\ \0  &  \Lambda\otimes I_{r} \ebm,\qquad \cJ_A = \bbm I_{r}\otimes A & \0  \\ \0  &I_{r}\otimes \Lambda \ebm, ~~\text{and}
\end{align*}
$\cP_l^{(2)}$ is the lower  block row of $\cP_l$ as shown in~\eqref{eq:p1_exp}. Furthermore, since $M_{rnr}$ is a permutation matrix, this implies $M_{rnr}M_{rnr}^T = I$. Using this relation in~\eqref{eq:derC}, we obtain
\begin{align}
    \frac{\partial \cE^2}{\partial \tC_{ij}}     &  = 2(\cI_p)^T \left(   -e^p_i(e^r_j)^T  \otimes \bbm C&-\tC\ebm \right)M_{rnr}
    \left(  - \cJ_\Lambda  - \cJ_A \right)^{-1}\Big(  M_{rnr}^T \left(\tB  \otimes \tB^e \right)\cI_m \nonumber\\
    &\quad + M_{rnr}^T \sum_{k=1}^m \left(\tN_k  \otimes \tN^e_k\right) \cP^{(2)}_1 +  M_{rnr}^T\left( \tH \hat{\cF}  \otimes \tH^e \right)T_{(n+r,n+r)}(\cP_l \otimes \cP_l)\Big) \nonumber\\
    &= 2(\cI_p)^T \left(   \bbm  -e^p_i(e^r_j)^T \otimes C & e_ie_j^T  \otimes\tC\ebm \right)
    \left(  - \cJ_\Lambda  - \cJ_A \right)^{-1}\left(   \bbm \tB  \otimes B \\ \tB\otimes \tB \ebm \cI_m \nonumber\right.\nonumber\\
    &\left.\quad +  \sum_{k=1}^m  \bbm \tN_k  \otimes N_k & \0  \\ \0 & \tN_k\otimes \tN_k  \ebm  M_{rnr}^T\cP^{(2)}_1 \right.\nonumber\\ \allowdisplaybreaks
    &\left. \quad +  \bbm \left(\tH \hat{\cF} \otimes H\cF \right)T_{(n+r,n+r)}(M\otimes M)(M^T\otimes M^T)(\cP_l \otimes \cP_l) \\ \left(\tH\hat{\cF} \otimes \tH\hat{\cF}\right)T_{(n+r,n+r)}(M\otimes M) (M^T\otimes M^T)(\cP_l \otimes \cP_l) \ebm\right) ,\label{eq:secondlasteq_C}
 \end{align}
where $M$ is the permutation matrix  defined in \eqref{eq:Per_M1}. The multiplication of $M^T$  and $\cP_l$ yields
\begin{equation*}
 \begin{aligned}
  M^T\cP_l =  \bbm M_{nnr} \cP_l^{(1)} \\ M_{rnr}\cP^{(2)}_1 \ebm =   \bbm p_1^T&p_2^T& p_3^T& p_4^T \ebm ^T =: \tilde{\cP_l} ,
 \end{aligned}
\end{equation*}
where
\begin{equation}\label{eq:def_P1234}
 \begin{aligned}
 p_1& = \left(  -A \otimes I_n  - I_n \otimes A  \right)^{-1}\left(  B\otimes B\right) \cI_m,&
 p_2& = \left(  -A \otimes I_r  - I_n \otimes \Lambda  \right)^{-1}\left(  B\otimes \tB\right) \cI_m, \\
  p_3& = \left(  -\Lambda \otimes I_n  - I_r \otimes A  \right)^{-1}\left(  \tB\otimes B\right) \cI_m, &
 p_4& = \left(  -\Lambda \otimes I_r  - I_r \otimes \Lambda  \right)^{-1}\left(  \tB\otimes \tB\right) \cI_m. \\
 \end{aligned}
\end{equation}
Moreover, note that $p_3 = \vecop{V_1}$, where $V_1$ solves \eqref{eq:solveV1}.  Applying the result of \Cref{lemma:rel_FPM} in \eqref{eq:secondlasteq_C} yields
 \begin{align}
   \frac{\partial \cE^2}{\partial \tC_{ij}}   &=    2(\cI_p)^T \left(   e^p_i(e^r_j)^T  \otimes  C  \right) \left(  -  \Lambda\otimes I_n    - I_{r}\otimes A    \right)^{-1} \Big(     (\tB  \otimes B )\cI_m +    \sum_{k=1}^m(\tN_k  \otimes N_k)  p_3 \nonumber \\
   &\qquad +    (\tH  \otimes H) T_{(n,r)} p_3\otimes p_3\Big)-
      2(\cI_p)^T \left(   e^p_i(e^r_j)^T  \otimes  \tC  \right) \left(   - \Lambda\otimes I_n    - I_{r}\otimes \tA    \right)^{-1} \nonumber\\
      &\qquad \times \Big(    (\tB  \otimes \tB )\cI_m  +   \sum_{k=1}^m( \tN_k  \otimes \tN_k ) p_4  + (\tH   \otimes \tH) T_{(r,r)}  (p_4 \otimes p_4 \Big) \nonumber\\
      &=    2(\cI_p)^T \left(   e^p_i(e^r_j)^T  \otimes  C  \right) \left(  -  \Lambda\otimes I_n    - I_{r}\otimes A    \right)^{-1} \Big(     (\tB  \otimes B )\cI_m +    \sum_{k=1}^m (\tN_k  \otimes N_k)  p_3  \nonumber\\
   &\qquad +    (\tH  \otimes H) T_{(n,r)} p_3\otimes p_3 \Big)- 2
      (\cI_p)^T \left(   e^p_i(e^r_j)^T \otimes  \hC  \right) \left(   - \Lambda\otimes I_n    - I_{r}\otimes \hA    \right)^{-1} \nonumber\\
      &\qquad \times \Big(    (\tB  \otimes \hB )\cI_m  +  \sum_{k=1}^m ( \tN_k  \otimes \hN_k ) \hp_4  + (\tH   \otimes \hH) T_{(r,r)}  (\hp_4 \otimes \hp_4 \Big),\label{eq:lasteq_C}
 \end{align}
where $ \hp_4 = \left(  -\Lambda \otimes I_r  - I_r \otimes \hA  \right)^{-1}\left(  \tB\otimes \hB\right) \cI_m = \vecop{\hV_1}$, where $\hV_1$ is as defined in \eqref{eq:def_VWhat}. Setting \eqref{eq:lasteq_C} equal to zero results in a necessary condition with respect to $\tC$ as follows:
   \begin{equation}\label{eq:first_condition}
   \begin{aligned}
   & (\cI_p)^T \left(   e^p_i(e^r_j)^T \otimes  C  \right) \left(  -  \Lambda\otimes I_n    - I_{r}\otimes A    \right)^{-1} \Big(     (\tB  \otimes B )\cI_m +    \sum_{k=1}^m (\tN_k  \otimes N_k)  p_3  \\
   &\qquad +    (\tH  \otimes H) T_{(n,r)} (p_3\otimes p_3) \Big)\\
     & = (\cI_p)^T \left(   e^p_i(e^r_j)^T \otimes  \hC  \right) \left(   - \Lambda\otimes I_n    - I_{r}\otimes \hA    \right)^{-1} \\
      &\qquad \times \Big(    (\tB  \otimes \hB )\cI_m  +    \sum_{k=1}^m( \tN_k  \otimes \hN_k ) \hp_4  + (\tH   \otimes \hH) T_{(r,r)}  (\hp_4 \otimes \hp_4 \Big).
\end{aligned}
      \end{equation}
Now, we first manipulate the left-side of the above  equation~\eqref{eq:first_condition}.  Using \Cref{lemma:change_kron} and \eqref{eq:tracepro}, we get
\begin{align*}
&(\cI_p)^T \left(   e^p_i(e^r_j)^T  \otimes  C  \right) \left(  -  \Lambda\otimes I_n    - I_{r}\otimes A    \right)^{-1} \Big(     (\tB  \otimes B )\cI_m +    \sum_{k=1}^m (\tN_k  \otimes N_k)  p_3  \\
&\qquad +    (\tH  \otimes H) T_{(n,r)} (p_3\otimes p_3) \Big) \\
&= (\cI_p)^T \left(   e^p_i(e^r_j)^T  \otimes  C  \right) \left(-\Lambda\otimes I_n    - I_{r}\otimes A    \right)^{-1} \Big(  \vecop{B\tB^T} +    \sum_{k=1}^m \vecop{N_kV_1\tN_k^T}  \\
&\qquad +    (\tH  \otimes H) \vecop{V_1\otimes V_1} \Big) \displaybreak\\
&= (\cI_p)^T \left(   e^p_i(e^r_j)^T  \otimes  C  \right) \left(-\Lambda\otimes I_n    - I_{r}\otimes A    \right)^{-1}  \Big(\vecop{B\tB^T+    \sum_{k=1}^m N_kV_1\tN_k^T}    \\
&\qquad  +   \vecop{ H (V_1\otimes V_1)\tH^T }\Big) \\ 
&= (\cI_p)^T \left(   e^p_i(e^r_j)^T \otimes  C  \right) \left(\vecop{V_1}+\vecop{V_2}\right) = \trace{C(V_1+V_2)e^r_j(e^p_i)^T} \\&= \trace{CVe^r_j(e^p_i)^T},
\end{align*}
where $V_2$ solves \eqref{eq:solveV2} and $V = V_1+V_2$. Using the similar steps, we can show that the right-side of~\eqref{eq:first_condition} is equal to $\trace{\hC\hV e^r_j(e^p_i)^T}$, where $\hV$ is defined in~\eqref{eq:def_VWhat}. Therefore, \cref{eq:first_condition} is the same as  \eqref{eq:cond_C}.

 \subsection*{ Necessary conditions with respect to \boldmath{$\Lambda$}}
 By utilizing \Cref{lemma:appenA1}, we aim at deriving the necessary condition with respect to the $i$th diagonal entry of $\Lambda$. We differentiate $\cE$ w.r.t. $\lambda_i$ to obtain
\begin{equation*}
 \begin{aligned}
  \frac{\partial \cE^2}{\partial \lambda_{i}} & =  2(\cI_p)^T \left(\tC^e  \otimes \tC^e\right)  \cL_e^{-1}\E \cL_e^{-1}   \Big(  \left(\tB^e \otimes \tB^e \right)\cI_m  + \sum_{k=1}^m\left(\tN^e_k \otimes \tN^e_k\right) \cP_l   \\
  &\quad + \left(\tH^e \otimes \tH^e \right) T_{(n+r,n+r)} ( \cP_l \otimes  \cP_l ) \Big)   +  (\cI_p)^T \left( \tC^e\otimes \tC^e \right) \cL_e^{-1} \\
   &\quad \times \left( 2\sum_{k=1}^m\left(\tN^e_k \otimes \tN^e_k \right)\cL_e^{-1}\E\cP_l + 4  \left(\tH^e \otimes \tH^e\right) T_{(n+r,n+r)} \left( \left(\cL_e^{-1} \E\cP_l\right)\otimes \cP_l\right) \right),
 \end{aligned}
\end{equation*}
where
\begin{equation*}
 \cL_{e} = - \left(\tA^e\otimes I_{n+r}   + I_{n+r}\otimes \tA^e \right) \quad\text{and}\quad \quad \E = \bbm \0 & \0\\ \0 & e^r_i (e^r_i)^T \ebm\otimes I_{n+r}.
\end{equation*}
Performing some algebraic  calculations  gives rise to the following expression:
\begin{align*}
 \frac{\partial \cE^2}{\partial \lambda_{i}} & = 2(\cI_p)^T \left( -\tC \otimes \tC^e  \right) \cZ_e^{-1} \Xi_{n+r} \cZ_e^{-1}  \Big(   \left(\tB  \otimes \tB^e \right)\cI_m  +  \sum_{k=1}^m \left(\tN_k  \otimes \tN^e_k\right) \cP^{(2)}_1 \\
 &+   \left(\tH \hat{ \cF}  \otimes \tH^e \right) T_{(n+r,n+r)} (\cP_l \otimes \cP_l)\Big)
     + 2(\cI_p)^T \left( -\tC \otimes \tC^e  \right) \cZ_e^{-1} \\
     &\times\Big(  \sum_{k=1}^m \left(\tN_k  \otimes \tN^e_k\right)
    \cZ_e^{-1} \Xi_{n+r}\cP^{(2)}_1 +  2\left(\tH \hat{ \cF}  \otimes \tH^e \right) T_{(n+r,n+r)} (\cL^{-1}_e\E\cP_l \otimes \cP_l)\Big),
\end{align*}
where $\cZ_e := -\left( \Lambda  \otimes I_{n+r}   + I_{r}\otimes A^e \right)$ and $\Xi_m := (e^r_i(e^r_i)^T\otimes I_m)$. Next, we utilize \Cref{lemma:rel_FPM} and use the permutation matrix $M$ (as done while deriving the necessary conditions with respect to $\tC$) to obtain
{\small
\begin{align*}
&\frac{\partial \cE^2}{\partial \lambda_{i}} \\
& ~~=   2(\cI_p)^T \cS  \Big( (\tB  \otimes B )\cI_m +    \sum\nolimits_{k=1}^m(\tN_k  \otimes N_k)  p_3   +    (\tH  \otimes H) T_{(n,r)} (p_3\otimes p_3) \Big) \\ 
&~~~-2(\cI_p)^T \tilde{\cS}\Big( (\tB  \otimes \tB )\cI_m +    \sum\nolimits_{k=1}^m(\tN_k  \otimes \tN_k)  p_4   +    (\tH  \otimes \tH) T_{(r,r)} (p_4\otimes p_4) \Big) \\
&~~~+2(\cI_p)^T \left(  \tC  \otimes  C  \right) L ^{-1} \Big(    \sum\nolimits_{k=1}^m(\tN_k  \otimes N_k)L ^{-1} \Xi_n p_3   +   2(\tH  \otimes H) T_{(n,r)} (L ^{-1} \Xi_np_3\otimes p_3) \Big)\\
&~~~-2(\cI_p)^T \left(  \tC  \otimes  \tC  \right) \tL ^{-1} \Big(   \sum\nolimits_{k=1}^m (\tN_k  \otimes \tN_k)\tL ^{-1} \Xi_r p_4   +  2 (\tH  \otimes \tH) T_{(r,r)} (\tL ^{-1} \Xi_r p_4\otimes p_4) \Big)\displaybreak\\[5pt]
& ~~=   2(\cI_p)^T \cS \Big( (\tB  \otimes B )\cI_m +    \sum\nolimits_{k=1}^m(\tN_k  \otimes N_k)  p_3   +    (\tH  \otimes H) T_{(n,r)} (p_3\otimes p_3) \Big) \\
&~~~-2(\cI_p)^T \hat{\cS}\Big( (\tB  \otimes \hB )\cI_m +    \sum\nolimits_{k=1}^m(\tN_k  \otimes \hN_k)  \hp_4   +    (\tH  \otimes \hH) T_{r,r} (\hp_4\otimes \hp_4) \Big)  \\
&~~~+2(\cI_p)^T \left(  \tC  \otimes  C  \right) L ^{-1} \Big(    \sum\nolimits_{k=1}^m(\tN_k  \otimes N_k)L ^{-1} \Xi_n p_3   +   2(\tH  \otimes H) T_{(n,r)} (L ^{-1} \Xi_n p_3\otimes p_3) \Big)\\
&~~~-2(\cI_p)^T \left(  \tC  \otimes  \hC  \right) \hL ^{-1} \Big(    \sum\nolimits_{k=1}^m(\tN_k  \otimes \hN_k)\hL ^{-1} \Xi_r \hp_4   +  2 (\tH  \otimes \hH) T_{(r,r)} (\hL ^{-1} \Xi_r(\hp_4\otimes \hp_4) \Big),
\end{align*}
where $p_3$ and $p_4$ are the same as defined in~\eqref{eq:def_P1234}, and
\begin{equation*}
\begin{aligned}
\cS &: = \left(  \tC  \otimes  C  \right) L ^{-1} (e^r_i(e^r_i)^T\otimes I_n) L^{-1},&
\tilde{\cS} &:= \left(  \tC  \otimes  \tC  \right) \tL ^{-1} (e^r_i(e^r_i)^T\otimes I_r) \tL^{-1}, \\
\hat{\cS} &:= \left(  \tC  \otimes  \hC  \right) \hL ^{-1} (e^r_i(e^r_i)^T\otimes I_r) \hL^{-1},&
 L&:= -\left(  \Lambda \otimes I_n  + I_r \otimes A  \right),\\
 \tL &:= -\left(  \Lambda \otimes I_r  + I_r \otimes \Lambda  \right),&
  \hL &:= -\left(  \Lambda \otimes I_r  + I_r \otimes \hA  \right).
 \end{aligned}
\end{equation*}
By using properties derived in \Cref{lemma:trace_property}, we can  simplify the above equation
\begin{align*}
    \frac{\partial \cE^2}{\partial \lambda_{i}} & =   2(\cI_p)^T \cS \Big( (\tB  \otimes B )\cI_m +    \sum\nolimits_{k=1}^m(\tN_k  \otimes N_k)  p_3   +    (\tH  \otimes H) T_{(n,r)} (p_3\otimes p_3) \Big) \\
    &~~-2(\cI_p)^T\hat{\cS} \Big( (\tB  \otimes \hB )\cI_m +   \sum\nolimits_{k=1}^m (\tN_k  \otimes \hN_k)  p_4   +    (\tH  \otimes \hH) T_{(r,r)} (\hp_4\otimes \hp_4) \Big) \\
     &~~+2(\cI_m)^T \left(  \tB  \otimes  B  \right) L ^{-T} \Xi_n L^{-T}\Big(    \sum_{k=1}^m(\tN_k  \otimes N_k)^Tq_3   +   2(\tilde{\cH}^{(2)}  \otimes \cH^{(2)}) T_{(n,r)} (p_3\otimes q_3) \Big)\\
     &~~-2(\cI_m)^T \left(  \tB  \otimes  \hB  \right) \hL ^{-T} \Xi_r \hL^{-T} \Big(  \sum_{k=1}^m(\tN_k  \otimes \hN_k)^T\hq_4   + 2  (\tilde{\cH}^{(2)}  \otimes \hat{\cH}^{(2)}) T_{(r,r)} ( \hp_4\otimes \hq_4) \Big),
\end{align*}
where
\begin{equation*}
 \begin{aligned}
  q_3& = \left(  -\Lambda \otimes I_n  - I_r \otimes A  \right)^{-T}\left(  \tC\otimes C\right) \cI_p &\text{and}\quad
 \hq_4& = \left(  -\Lambda \otimes I_r  - I_r \otimes A_r  \right)^{-T}\left(  \tC\otimes \hC\right) \cI_p. \\
 \end{aligned}
\end{equation*}
  }
\noindent
Once again, we determine an interpolation-based necessary condition with respect to $\Lambda_i$ by setting the last equation equal to zero:
{\small
\begin{equation}\label{eq:cond2}
\begin{aligned}
 &(\cI_p)^T \cS \Big( (\tB  \otimes B )\cI_m +   \sum_{k=1}^m (\tN_k  \otimes N_k)  p_3   +    (\tH  \otimes H) T_{(n,r)} (p_3\otimes p_3) \Big) \\
     &~~+ (\cI_m)^T \left(  \tB  \otimes  B  \right) L ^{-T} \Xi_n L^{-T}\Big(   \sum_{k=1}^m (\tN_k  \otimes N_k)^Tq_3   +   2(\tilde{\cH}^{(2)}  \otimes \cH^{(2)}) T_{(n,r)} (p_3\otimes q_3) \Big)\\
     &~~ =  (\cI_p)^T\hat{\cS} \Big( (\tB  \otimes \hB )\cI_m +    \sum_{k=1}^m(\tN_k  \otimes \hN_k)  p_4   +    (\tH  \otimes \hH) T_{(r,r)} (\hp_4\otimes \hp_4) \Big) ,\\
     &~~+ (\cI_m)^T \left(  \tB  \otimes  \hB  \right) \hL ^{-T} \Xi_r\hL^{-T} \Big(  \sum_{k=1}^m(\tN_k  \otimes \hN_k)^T\hq_4   +  2 (\tilde{\cH}^{(2)}  \otimes \hat{\cH}^{(2)}) T_{(r,r)} ( \hp_4\otimes \hq_4) \Big).
\end{aligned}
\end{equation}
}
Now, we first simplify the left-side of the above equation using \Cref{lemma:change_kron} and \eqref{eq:tracepro}.  We first focus of the first part of the left-side of \eqref{eq:cond2}. This yields

\begin{align*}
&(\cI_p)^T \cS \Big( (\tB  \otimes B )\cI_m +   \sum_{k=1}^m (\tN_k  \otimes N_k)  p_3   +    (\tH  \otimes H) T_{(n,r)} (p_3\otimes p_3) \Big) \\
& \qquad = (\cI_p)^T \left(  \tC  \otimes  C  \right) L ^{-1} (e^r_i(e^r_i)^T\otimes I_n) \\
&\quad\qquad \times L^{-1}\Big( (\tB  \otimes B )\cI_m +   \sum_{k=1}^m (\tN_k  \otimes N_k)  p_3   +    (\tH  \otimes H) T_{(n,r)} (p_3\otimes p_3) \Big)  \displaybreak \\
&\qquad =  \underbrace{(\cI_p)^T\left(  \tC  \otimes  C  \right) L ^{-1}}_{(\vecop{W_1})^T} (e^r_i(e^r_i)^T\otimes I_n) \vecop{V} = \trace{Ve^r_i(e^r_i)^TW^T_1} \\
&\qquad = \left(V_1(:,i)\right)^TW(:,i), =  \left(W_1(:,i)\right)^TV(:,i),
\end{align*}
where $W_1$ solves \eqref{eq:solveW1}. Analogously, we can show that 
\begin{equation}
\begin{aligned}
&(\cI_m)^T \left(  \tB  \otimes  B  \right) L ^{-T} \Xi_n L^{-T}\Big(   \sum_{k=1}^m (\tN_k  \otimes N_k)^Tq_3   +   2(\tilde{\cH}^{(2)}  \otimes \cH^{(2)}) T_{(n,r)} (p_3\otimes q_3) \Big) \\
&\qquad\qquad = \left(W(:,i)\right)^TV_1(:,i).
\end{aligned}
	\end{equation}
	Thus, the left-side of \eqref{eq:cond2} is equal to $\left(W(:,i)\right)^TV_1(:,i) + \left(W_1(:,i)\right)^TV(:,i)$. Using the similar steps, we can also show that the right-side of \eqref{eq:cond2} is equivalent to $\left(\hW(:,i)\right)^T\hV_1(:,i) + \left(\hW_1(:,i)\right)^T\hV(:,i)$. Thus, we obtain the optimality conditions with respect to $\Lambda$ as \eqref{eq:cond_lambda}.

The necessary conditions with respect to $\tB$, $\tN$ and $\tH$ can also be determined in a similar manner as for $\tC$ and $\lambda_i$. For  brevity of the paper, we skip detailed derivations; however, we state  final optimality conditions.  A necessary condition for optimality with respect to the $(i,j)$ entry of $\tN_k$ is
\begin{align*}
   &(\cI_p)^T \left(   \tC  \otimes  C  \right) L^{-1} \left(    (e^r_i(e_j^r)^T  \otimes N_k)  p_3   \right) = (\cI_p)^T \left( \tC  \otimes  \hC  \right) \hL^{-1} \left(        ( e^r_i(e^r_j)^T  \otimes \hN_k ) \hp_4     \right),
\end{align*}
which then yields~\eqref{eq:cond_N} in  the Sylvester equation form. A similar optimality condition with respect to the $(i,j)$ entry of $\tH$ is given by
\begin{align*}
   &(\cI_p)^T \left(   \tC  \otimes  C  \right) L^{-1} \left(    (e_i^r(e^{r^2}_j)^T  \otimes H)T_{(n,r)}  (p_3\otimes p_3)   \right)\\
      & \qquad\qquad= (\cI_p)^T \left( \tC  \otimes  \hC  \right) \hL^{-1} \left(        ( e_i^r(e^{r^2}_j)^T  \otimes \hH) T_{(r,r)}(\hp_4\otimes \hp_4)     \right),
\end{align*}
which can be equivalently described as~\eqref{eq:cond_H}. Finally, the necessary condition appearing with respect to the $(i,j)$ entry of $\tB$ is

\begin{align*}
   &(\cI_m)^T \left(   e^r_i(e^m_j)^T  \otimes  B  \right) L^{-T} \Big(     (\tC  \otimes C )\cI_p +   \sum_{k=1}^m (\tN_k  \otimes N_k)^T  q_3  \\
   &\qquad\qquad +   2(\tilde{\cH}^{(2)}  \otimes \cH^{(2)}) T_{(n,r)} (p_3\otimes q_3) \Big),\\
     & = (\cI_m)^T \left(   e^r_i(e^m_j)^T  \otimes  \hB  \right) \hL^{-T} \Big(    (\tC  \otimes \hC )\cI_p  +  \sum_{k=1}^m ( \tN_k  \otimes \hN_k )^T \hq_4  \\
     &\qquad\qquad + 2(\tilde{\cH}^{(2)}   \otimes \hat{\cH}^{(2)}) T_{(r,r)}  (\hp_4 \otimes \hq_4 \Big),
\end{align*}
which gives rise to~\eqref{eq:cond_B}.

 \end{appendices}
\begin{appendices}
	\renewcommand\thetable{\thesection\arabic{table}}
	\renewcommand\thefigure{\thesection\arabic{figure}}
	
	\section{Proof of Theorem $\text{\ref{thm:opt_rom}}$}\label{appen:proof_opt}
	
	We begin by establishing a relationship between $V_1\in \Rnr,\hV_1 \in\Rrr$ and $V\in\Rnr$.  For this,  consider the Sylvester equation related to $V_1$
	\begin{equation}\label{eq:sys_v1}
	-V_1\Lambda - AV_1 = B\tB^T, 
	\end{equation}
	and an oblique projector $\Pi_{v} := V_1(W^TV_1)^{-1}W^T$. Then, we apply the projector $\Pi_{v}$ to the Sylvester equation~\eqref{eq:sys_v1} from the left to obtain
	\begin{align}
	&-V_1\Lambda - \Pi_{v}AV_1 = \Pi_{v} B\tB^T,~\mbox{and} \nonumber\\
	&-V_1\Lambda - \Pi AV_1 = (\Pi_v - \Pi)AV_1 + \Pi_{v} B\tB^T,\label{eq:full_v1}
	\end{align}
	where $\Pi := V(W^TV)^{-1}W^T$. Now, we recall that $\hV_1$ satisfies the Sylvester equation
	\begin{equation*}
	-\hV_1\Lambda - \hA\hV_1 = \hB\tB^T.
	\end{equation*}
	We next multiply it by $V$ from the left and substitute  for $\hA$ and $\hB$  to obtain
	\begin{equation}\label{eq:v1_red}
	-V\hV_1\Lambda - \Pi AV\hV_1 = \Pi B\tB^T.
	\end{equation}
Subtracting \eqref{eq:full_v1} from \eqref{eq:v1_red} yields
	\begin{equation*}
	\begin{aligned}
	&(V_1 - V\hV_1)\Lambda + \Pi A(V_1-V\hV_1) = (\Pi - \Pi_v)\left( AV_1 + B\tB^T \right).\\
	\end{aligned}
	\end{equation*}
	Since it is assumed that $\sigma(\hA) \cap \sigma(-\Pi A)  = \emptyset$, this implies  that $\Lambda\otimes I_n + I_r\otimes (\Pi A)$ is invertible. Therefore, we can write
	\begin{equation}\label{eq:rel_vv1}
	V_1 = V\hV_1 + \epsilon_{v},
	\end{equation}
	where  $\epsilon_{v}$ solves the Sylvester equation
	\begin{equation}\label{eq:epsilonv1}
	\epsilon_{v} \Lambda  + \Pi_v A \epsilon_{v} =  (\Pi - \Pi_v)\left( AV_1 + B\tB^T \right).
	\end{equation}
	Similarly, one can show that
	\begin{equation}\label{eq:rel_ww1}
	W_1 = W(W^TV)^{-T}\hW_1 + \epsilon_{w},
	\end{equation}
	where  $\epsilon_w$ solves
$$			\epsilon_w \Lambda + \Pi^T A^T \epsilon_w = (\Pi^T-\Pi_{w})(A^TW_1 + C^T\tC),$$
in which $\Pi_w := W_1(V^TW)V^T$.	Using~\eqref{eq:rel_vv1} and \eqref{eq:rel_ww1}, we obtain
	\begin{equation*}
	\begin{aligned}
	\hW_1(:,i)^T\hN_k\hV_1(:,j) &= \hW_1(:,i)^T (W^TV)^{-1}W^TN_kV\hV_1(:,j)\\
	&= \left(W_1(:,i) - \epsilon_{w}(:,i)\right)^T N_k \left(V_1(:,j) -\epsilon_v(:,j)\right) \\
	&= W_1(:,i)^T N_k V_1(:,j) -  \left(\epsilon_w(:,i)\right)^TN_k(  V_1(:,j)-\epsilon_v(:,j)) \\
	&\qquad 	-   \left(W_1(:,i)\right)^TN_k(\epsilon(:,j)),\\
	\end{aligned}
	\end{equation*}
	which is \eqref{eq:cond_N1} in \Cref{thm:optimality_conditions}. Similarly, one can prove \eqref{eq:cond_H1}. To prove \eqref{eq:cond_C1}, we consider the following Sylvester equation  for $V$:
	\begin{equation}
	\begin{aligned}
	V(-\Lambda) - AV &= B\tB^T + \sum_{k=1}^mN_kV_1\tN_k^T + H(V_1\otimes V_1)\tH^T.
	\end{aligned}
	\end{equation}
	Applying $\Pi$ to both sides of the above Sylvester equation yields
	\begin{equation}
	\begin{aligned}
	V \Big(I_r(-\Lambda) - \hA I_r\Big) &= V\Big(\hB\tB^T +  \cY  \Big),
	&\qquad\qquad
	\end{aligned}
	\end{equation}
	where $\cY = (W^TV)^{-1}W^T\left( \sum_{k=1}^m N_kV_1\tN_k^T + H(V_1\otimes V_1)\tH^T\right)$.
	This implies that
	\begin{equation}\label{eq:V_full}
	\begin{aligned}
	I_r(-\Lambda) - \hA I_r &= \hB\tB^T + \cY .
	&\qquad\qquad
	\end{aligned}
	\end{equation}
	Next, we consider the Sylvester equation for $\hV$,
	\begin{equation}\label{eq:V_red}
	\begin{aligned}
	\hV(-\Lambda) - \hA\hV &= \hB\tB^T + \sum_{k=1}^m\hN_k\hV_1\tN_k^T + \hH(\hV_1\otimes \hV_1)\tH^{T}.
	\end{aligned}
	\end{equation}
	We then subtract \eqref{eq:V_red} and \eqref{eq:V_full} to obtain
	\begin{equation*}
	\begin{aligned}
	(I_r-\hV)(-\Lambda) -\hA(I_r - \hV) &=  \sum_{k=1}^m(W^TV)^{-1}W^TN_k\left( V_1 - V\hV_1\right)\tN_k^T \\
	&\qquad + (W^TV)^{-1}W^T H \left(V_1\otimes V_1 - (V\hV_1 \otimes V\hV_1) \right)\tH^T.
	\end{aligned}
	\end{equation*}
	Substituting $V\hV_1$ from~\eqref{eq:rel_vv1} gives
	\begin{equation*}
	\begin{aligned}
	(I_r-\hV)(-\Lambda) -\hA(I_r - \hV) &=  \sum_{k=1}^m (W^TV)^{-1}W^TN_k\epsilon_v\tN_k^T \\
	&\qquad + (W^TV)^{-1}W^T H \left( \epsilon_v\otimes V_1+ V_1\otimes  \epsilon_v + \epsilon_v\otimes \epsilon_v \right)\tH^T.
	\end{aligned}
	\end{equation*}
Since $\Lambda$ contains the eigenvalues of $\hA$ and $\hA$ is stable, $\Lambda$ and $-\hA$ cannot have any common eigenvalues. Hence, the matrix  $\Lambda\otimes I_r + I_r\otimes\hA$ is invertible. Therefore the above Sylvester equations in $\Gamma:= \hV - I_r $ exists and solves

	\begin{align*}
	\Gamma_v\Lambda + \hA\Gamma_v &=  \sum_{k=1}^m (W^TV)^{-1}W^TN_k\epsilon_v\tN_k^T \\
	&\qquad + (W^TV)^{-1}W^T H \left( \epsilon_v\otimes V_1+ V_1\otimes  \epsilon_v + \epsilon_v\otimes \epsilon_v \right)\tH^T.
\end{align*}
To prove \eqref{eq:cond_C1}, we  observe that
	\begin{align*}
	\trace{\hC\hV e_i^r\left(e^p_j\right)^T } &= \trace{C V(I_r + \Gamma_{v}) e_i^r\left(e^p_j\right)^T} \\
	&= \trace{CVe_i^r\left(e^p_j\right)^T} + \trace{CV\Gamma_ve_i^r\left(e^p_j\right)^T}.
	\end{align*}
	Thus,
	\begin{equation*}
	\trace{CV e_i^r\left(e^p_j\right)^T} = \trace{\hC \hV e_i^r\left(e^p_j\right)^T} + \epsilon_C^{(i,j)}.
	\end{equation*}
	Analogously, we can prove that there exists $\Gamma_w$ such that
	$\hW = (W^TV)^{T} + \Gamma_{w}$ and it satisfies
\begin{align*}
 \Gamma_w \Lambda + \hA^T \Gamma_w &= V^T \left(\sum_{k=1}^mN^T_k\epsilon_w\tN_k + \cH^{(2)}(\epsilon_v\otimes (W_1+\epsilon_w) + V_1\otimes \epsilon_w)\left(\cH^{(2)}\right)^T \right).
\end{align*}
To prove \eqref{eq:cond_B1}, we  observe that
	\begin{equation*}
	\trace{\hB^T\hW e_i^r\left(e^m_j\right)^T } = \trace{B^TW(W^TV)^{-T} ((W^TV)^{T} + \Gamma_{v}) e_i^r\left(e^p_j\right)^T}.
	\end{equation*}
	Thus,
	\begin{equation*}
	\trace{\hB^T\hW e_i^r\left(e^m_j\right)^T } = \trace{B^TW^T + B^TW  (W^TV)^{-T} \Gamma_{w}) e_i^r\left(e^p_j\right)^T}.
	\end{equation*}
Since we now  know that $\hV = I_r + \Gamma_{v}$ and $\hW = (W^TV)^{T} + \Gamma_{w}$, hence, we get
\begin{equation}\label{eq:VW_fin}
 V\hV = V + V\Gamma_{v} \quad \text{and}\quad  W(W^TV)^{-T}\hW = W + W(W^TV)^{-T}\Gamma_{w}.
\end{equation}
We make use of \eqref{eq:VW_fin} to prove \eqref{eq:cond_lambda1} in the following:
\begin{align*}
 &(W_1(:,i))^T  V(:,i) +  \left(W_2(:,i)\right)^TV_1(:,i)\\
 &\qquad\qquad  = (W(:,i))^TV(:,i) -  (W_2(:,i))^T  V_2(:,i) \\
 &\qquad\qquad  = \left( W(W^TV)^{-T}\left(\hW(:,i) - \Gamma_{w}(:.i)\right) \right)^TV\left( \hV(:,i) - \Gamma_{v}(:,i)\right) \\
 &\qquad\qquad\qquad-  (W_2(:,i))^T  V_2(:,i) \\
 &\qquad\qquad  =   \left( \hW(:,i) - \Gamma_{w}(:.i) \right)^T\left( \hV(:,i) - \Gamma_{v}(:,i)\right)  -  (W_2(:,i))^T  V_2(:,i) \\
&\qquad\qquad= \left( \hW(:,i)\right)^T\hV(:,i) - \left( \hW(:,i) \right)^T\Gamma_v(:,i) - \left(\Gamma_{w}(:.i)\right)^T \left( \hV(:,i) - \Gamma_{v}(:,i)\right) \\
&\qquad\qquad\qquad  -  (W_2(:,i))^T  V_2(:,i) \\
&\qquad\qquad = (\hW_1(:,i))^T  \hV(:,i) +  \left(\hW_2(:,i)\right)^T\hV_1(:,i)  + \epsilon_\lambda^{(i)},
 \end{align*}
where
\begin{align*}
 \epsilon_\lambda^{(i)} &= -\left( \hW(:,i) \right)^T\Gamma_v(:,i) -  \left(\Gamma_{w}(:.i)\right)^T \left( \hV(:,i)- \Gamma_{v}(:,i)\right) \\
 &\qquad  -(W_2(:,i))^T  V_2(:,i)  +  (\hW_2(:,i))^T  \hV_2(:,i).
\end{align*}
This completes the proof.

\end{appendices}

\end{document}